\definecolor{e-mail}{rgb}{0,.40,.80}
\definecolor{reference}{rgb}{.20,.60,.22}
\definecolor{citation}{rgb}{0,.40,.80}
\newcommand{\cA}{\mathcal{A}}
\newcommand{\cB}{\mathcal{B}}
\newcommand{\cC}{\mathcal{C}}
\newcommand{\cD}{\mathcal{D}}
\newcommand{\cE}{\mathcal{E}}
\newcommand{\cF}{\mathcal{F}}
\newcommand{\cM}{\mathcal{M}}
\newcommand{\cN}{\mathcal{N}}
\newcommand{\cO}{\mathcal{O}}
\newcommand{\fd}{\mathfrak{d}}
\newcommand{\g}{\mathfrak{g}}
\newcommand{\h}{\mathfrak{h}}
\newcommand{\n}{\mathfrak{n}}
\newcommand{\B}{\mathrm{B}}
\renewcommand{\d}{\mathrm{d}}
\newcommand{\D}{\mathrm{D}}
\newcommand{\T}{\mathrm{T}}
\newcommand{\U}{\mathrm{U}}
\newcommand{\Z}{\mathrm{Z}}
\newcommand{\C}{\mathbf{C}}
\newcommand{\br}{\mathbf{r}}
\newcommand{\bE}{\mathbb{E}}
\newcommand{\calpha}{{\check\alpha}}
\newcommand{\cLambda}{{\check\Lambda}}
\newcommand{\Alg}{\mathrm{Alg}}
\newcommand{\bop}{\sigma\mathrm{op}}
\newcommand{\BrTens}{\mathrm{BrTens}}
\newcommand{\CoAlg}{\mathrm{CoAlg}}
\newcommand{\colim}{\mathrm{colim}}
\newcommand{\CoMod}{\mathrm{CoMod}}
\newcommand{\uEnd}{\underline{\mathrm{End}}}
\newcommand{\ev}{\mathrm{ev}}
\newcommand{\Fun}{\mathrm{Fun}}
\newcommand{\FunL}{\mathrm{Fun}^\mathrm{L}}
\newcommand{\HC}{\mathcal{HC}}
\newcommand{\Hom}{\mathrm{Hom}}
\newcommand{\id}{\mathrm{id}}
\newcommand{\inv}{\mathrm{inv}}
\newcommand{\LagrCorr}{\mathrm{LagrCorr}}
\newcommand{\Lie}{\mathrm{Lie}}
\newcommand{\LMod}{\mathrm{LMod}}
\newcommand{\Mod}{\mathrm{Mod}}
\newcommand{\mop}{\otimes\mathrm{op}}
\newcommand{\cPrL}{\mathcal{P}\mathrm{r}^{\mathrm{L}}}
\newcommand{\PrL}{\mathrm{Pr}^{\mathrm{L}}}
\newcommand{\pt}{\mathrm{pt}}
\newcommand{\QCoh}{\mathrm{QCoh}}
\newcommand{\Rep}{\operatorname{Rep}}
\newcommand{\RMod}{\mathrm{RMod}}
\newcommand{\Sym}{\mathrm{Sym}}
\newcommand{\triv}{\mathrm{triv}}
\newcommand{\Vect}{\mathrm{Vect}}
\newtheorem{thm}{Theorem}[section]
\newtheorem*{theorem}{Theorem}
\newtheorem{prop}[thm]{Proposition}
\newtheorem{lm}[thm]{Lemma}
\theoremstyle{definition}
\newtheorem{defn}[thm]{Definition}
\theoremstyle{remark}
\newtheorem{remark}[thm]{Remark}
\newtheorem{example}[thm]{Example}
\newcommand{\BiMod}[2]{{}_{#1}\mathrm{BiMod}_{#2}}
\newcommand{\defterm}[1]{\textbf{\emph{#1}}}
\newcommand{\bicoeq}[3]{
\xymatrix{
#1 & #2 \ar@<.5ex>[l] \ar@<-.5ex>[l] & #3 \ar@<.8ex>[l] \ar[l] \ar@<-.8ex>[l] 
}
}
\newcommand{\simp}[2]{
\xymatrix{
#1 & #2 \ar@<.5ex>[l] \ar@<-.5ex>[l] & \dots \ar@<.8ex>[l] \ar[l] \ar@<-.8ex>[l] 
}
}
\newcommand{\cosimp}[2]{
\xymatrix{
#1 \ar@<.5ex>[r] \ar@<-.5ex>[r] & #2 \ar@<.8ex>[r] \ar[r] \ar@<-.8ex>[r] & \dots
}
}
\begin{document}
\title{A categorical approach to quantum moment maps}
\address{Institut f\"{u}r Mathematik, Universit\"{a}t Z\"{u}rich, Zurich, Switzerland}
\email{p.safronov@ed.ac.uk}
\author{Pavel Safronov}
\begin{abstract}
We introduce quantum versions of Manin pairs and Manin triples and define quantum moment maps in this context. This provides a framework that incorporates quantum moment maps for actions of Lie algebras and quantum groups for any quantum parameter. We also show how our quantum moment maps degenerate to known classical versions of moment maps and describe their fusion.
\end{abstract}
\maketitle

\section*{Introduction}

In this paper we define quantum Manin pairs and quantum Manin triples and describe quantum moment maps in this setting.

\subsection*{Moment maps}

Given a Poisson manifold $X$ with an action of a Lie group $G$ that preserves a Poisson structure, a moment map (see \cite[Chapter 11]{MarsdenRatiu}) is a $G$-equivariant map $\mu\colon X\rightarrow \g^*$ which gives the Hamiltonian for the infinitesimal $\g$-action on $X$. This concept goes back to the works of Kostant, Souriau, Marsden and Weinstein. If $G$ is compact and it acts freely on $\mu^{-1}(0)$, we have the reduced space $X//G = \mu^{-1}(0) / G$ which is still a Poisson manifold.

In problems related to quantum groups, the group $G$ usually does not preserve the Poisson structure on $X$. Instead, $G$ is a Poisson-Lie group, i.e. it carries a multiplicative Poisson structure, and the action map $G\times X\rightarrow X$ is Poisson. The first theory of moment maps in this setting was proposed by Lu and Weinstein (see \cite{LuClassical} and \cite{LuWeinstein}) where the moment maps are maps $\mu\colon X\rightarrow G^*$ satisfying certain conditions, where $G^*$ is the Poisson-Lie dual group. For instance, if $G$ carries the zero Poisson-Lie structure, we may take $\g^*$ as the Poisson-Lie dual group thought of as an abelian group under addition equipped with the Kirillov--Kostant--Souriau Poisson structure. So, in this case the theory reduces to ordinary moment maps $\mu\colon X\rightarrow \g^*$.

To provide a finite-dimensional description of the symplectic structure on character varieties, Alekseev, Malkin and Meinrenken \cite{AMM} introduced a version of moment maps $\mu\colon X\rightarrow \g^*$ for symplectic manifolds. Namely, given a nondegenerate pairing $c\in\Sym^2(\g^*)^G$ they consider group-valued moment maps $\mu\colon X\rightarrow G$ together with a certain quasi-symplectic structure on $X$ satisfying certain conditions. They also prove that under standard assumptions $X//G = \mu^{-1}(e) / G$ is a symplectic manifold.

Generalizing Lu--Weinstein moment maps, Alekseev and Kosmann-Schwarzbach \cite{AKS} defined moment maps for actions of quasi-Poisson groups $G$. These are moment maps $\mu\colon X\rightarrow D/G$, where $D$ is the double Lie group of the quasi-Poisson group $G$. Moreover, in the Poisson-Lie case there is a natural morphism $G^*\rightarrow D/G$ and the Alekseev--Kosmann-Schwarzbach notion of moment maps factoring through $G^*$ recovers Lu's notion. If $G$ is equipped with a nondegenerate pairing $c$, we have the double $D = G\times G$ where $G\subset D$ is embedded diagonally. So, $D/G\cong G$ and in this case we get moment maps $\mu\colon X\rightarrow G$. It was shown in \cite{AKSM} that under a further nondegeneracy assumption on the quasi-Poisson manifold $X$, the resulting theory is equivalent to quasi-symplectic group-valued moment maps of Alekseev--Malkin--Meinrenken.

\subsection*{Quantum moment maps}

Quantum analogs of the moment maps $X\rightarrow \g^*$ are given as follows. One considers an algebra $A$ equipped with a compatible action of an algebraic group $G$ and a $G$-equivariant map $\mu\colon \U\g\rightarrow A$ such that $[\mu(x), -]$ is the infinitesimal action of $x\in\g$ on $A$. These moment maps are ubiquitous in the physics literature on the quantum BRST method.

Quantum analogs of Lu--Weinstein moment maps were introduced in \cite{LuQuantum}. Consider a Hopf algebra $H$ and an $H$-module algebra $A$. A moment map is then a map of algebras $\mu\colon H\rightarrow A$ such that $h\triangleright a = \mu(h_{(1)}) a \mu(S(h_{(2)}))$ for every $h\in H$ and $a\in A$, where $h\triangleright a$ denotes the $H$-action on $A$. Here we think of $H$ as the quantization of the Poisson-Lie dual group $G^*$.

A variant of Lu's quantum moment maps was proposed by Varagnolo and Vasserot in \cite{VaragnoloVasserot}. There one considers a left $H$-coideal subalgebra $H'\subset H$, i.e. the coproduct on $H$ restricts on $H'$ to a coaction $H'\rightarrow H\otimes H'$. Then a quantum moment map is an algebra map $\mu\colon H'\rightarrow A$ such that $\mu(h)a = h_{(1)}\triangleright a \cdot \mu(h_{(2)})$. Recall that the Alekseev--Kosmann-Schwarzbach moment maps factoring as $X\rightarrow G^*\rightarrow D/G$ are the same as Lu's classical moment maps. On the quantum level Varagnolo--Vasserot definition for $H'=H$ also recovers Lu's quantum moment maps. In applications to quantum groups (see e.g. \cite{Jordan} and \cite{JordanBalagovich}), one takes $H=\U_q(\g)$, the quantum group associated to a Lie algebra $\g$, and $H'\subset H$ as the reflection equation algebra $\cO_q(G)\subset \U_q(\g)$ (see \cite{KulishSklyanin}, \cite{MajidBraided} and \cite{KolbStokman} for its definition). In particular, in \cite[Section 3.1]{Jordan} it is explicitly suggested that the classical limit of the Varagnolo--Vasserot moment maps are the Alekseev--Kosmann-Schwarzbach moment maps.

\subsection*{Shifted Poisson geometry}

The goal of the present paper is to provide a comprehensive study of moment maps on the quantum level and show that they recover Alekseev--Kosmann-Schwarzbach notion after classical degeneration. Our results and constructions are heavily inspired by the theory of shifted symplectic structures \cite{PTVV} and shifted Poisson structures \cite{CPTVV}. Let us briefly explain how to understand previous constructions from this point of view.

One may organize $n$-shifted symplectic stacks into the following symmetric monoidal 2-category $\LagrCorr_n$:
\begin{itemize}
\item Its objects are $n$-shifted symplectic stacks.

\item 1-morphisms from an $n$-shifted symplectic stack $X$ to an $n$-shifted symplectic stack $Y$ are given by a Lagrangian correspondence $X\leftarrow L\rightarrow Y$, i.e. by an $n$-shifted Lagrangian map $L\rightarrow \overline{X}\times Y$, where $\overline{X}$ denotes the stack $X$ with the opposite $n$-shifted symplectic structure.

\item 2-morphisms from $X\leftarrow L_1\rightarrow Y$ to $X\leftarrow L_2\rightarrow Y$ are given by homotopy classes of stacks $M$ equipped with an $(n-1)$-shifted Lagrangian map $M\rightarrow L_1\times_{\overline{X}\times Y} L_2$.
\end{itemize}
Such a 2-category was constructed by Amorim and Ben-Bassat \cite{ABB} and it is extended to an $(\infty, m)$-category (for any $m$) in the upcoming work of Calaque, Haugseng and Scheimbauer. The unit of $\LagrCorr_n$ is given by the point $\pt$ and $\LagrCorr_n$ has duals and adjoints described as follows:
\begin{itemize}
\item The dual of an $n$-shifted symplectic stack is the same stack equipped with the opposite $n$-shifted symplectic structure.

\item The Lagrangian correspondence $X\leftarrow L\rightarrow Y$ admits a left and right adjoint given by the Lagrangian correspondence $Y\leftarrow L\rightarrow X$.
\end{itemize}

Given a group pair $(D, G)$, it is shown in \cite[Proposition 4.16]{PoissonLie} that the map on classifying stacks $\B G\rightarrow \B D$ has a 2-shifted Lagrangian structure. In other words, a group pair gives rise to a 1-morphism $\pt\rightarrow \B D$ in $\LagrCorr_2$ encoding $\B G$.

We may then consider the 1-shifted symplectic stack
\[\B G\times_{\B D} \B G\cong [G\backslash D/G]\]
obtained as the composite $\pt\rightarrow \B D\rightarrow \pt$ (of $\B G$ and its adjoint) and define classical moment maps to be 1-shifted Lagrangian morphisms $L\rightarrow [G\backslash D/G]$. This definition of classical moment maps is motivated by the following results:
\begin{itemize}
\item As shown by Bursztyn and Crainic \cite{BursztynCrainic}, the quotient $D/G$ carries a natural exact Dirac structure encoding the 1-shifted symplectic stack $[G\backslash D/G]$. Moreover, one can reinterpret quasi-Poisson moment maps of Alekseev--Kosmann-Schwarzbach in terms of Dirac morphisms to $D/G$.

\item It is shown by Calaque \cite{Calaque} (see also \cite{QuasiHamiltonian}) that moment maps $X\rightarrow \g^*$ can be encoded in terms of 1-shifted Lagrangian morphisms $[X/G]\rightarrow [\g^*/G]$ and quasi-symplectic group-valued moment maps $X\rightarrow G$ can be encoded in terms of 1-shifted Lagrangian morphisms $[X/G]\rightarrow [G/G]$. These two cases correspond to the group pairs $(T^* G, G)$ and $(G\times G, G)$ respectively.
\end{itemize}

Given a group triple $(D, G, G^*)$, it is shown in \cite[Proposition 4.17]{PoissonLie} that we have an iterated Lagrangian correspondence
\[
\xymatrix{
& \pt \ar[dl] \ar[dr] & \\
\B G^* \ar[dr] && \B G \ar[dl] \\
& \B D
}
\]
More explicitly, we have a 2-shifted symplectic structure on $\B D$, 2-shifted Lagrangian structures on $\B G\rightarrow \B D$ and $\B G^*\rightarrow \B D$ and a 1-shifted Lagrangian structure on
\[\pt\rightarrow \B G^*\times_{\B D} \B G\cong [G^*\backslash D/G].\]
This may be interpreted within the 2-category $\LagrCorr_2$ in terms of the following data:
\begin{itemize}
\item An object $\B D\in\LagrCorr_2$.

\item 1-morphisms $f\colon 1\rightarrow \B D$ and $g\colon \B D\rightarrow 1$ encoding $\B G$ and $\B G^*$.

\item A 2-morphism $g\circ f\Rightarrow \id_1$ encoding the 1-shifted Lagrangian $\pt\rightarrow [G^*\backslash D/G]$
\end{itemize}

\subsection*{Quantum Manin pairs}

The previous definitions of Manin pairs and Manin triples may be phrased in any (pointed) 2-category. Given an $n$-shifted symplectic stack $X$, one may consider $\bE_n$-monoidal deformations of the symmetric monoidal category $\QCoh(X)$. Given an $n$-shifted Lagrangian map $L\rightarrow X$, its deformation quantization is a pair $(\cC, \cD)$ where $\cC$ is an $\bE_n$-monoidal deformation of $\QCoh(X)$ and $\cD$ is an $\bE_{n-1}$-monoidal deformation of $\QCoh(L)$ together with an action of $\cC$ on $\cD$. Note that an $\bE_2$-monoidal category is the same as a braided monoidal category. So, we may define quantum Manin pairs and quantum Manin triples by replacing the 2-category $\LagrCorr_2$ by $\BrTens$, the 2-category of braided monoidal categories defined as follows:
\begin{itemize}
\item Its objects are braided monoidal categories.

\item 1-morphisms from a braided monoidal category $\cC_1$ to a braided monoidal category $\cC_2$ is a monoidal category $\cD$ equipped with compatible right $\cC_1$- and left $\cC_2$-actions.

\item 2-morphisms from $\cD_1$ to $\cD_2$ which are both equipped with a right $\cC_1$- and left $\cC_2$-action are equivalence classes of $(\cD_2, \cD_1)$-bimodule categories.
\end{itemize}
We refer to \cite[Definition 1.2]{BJS} for a precise description of the 4-category of braided monoidal categories (following previous works \cite{Haugseng}, \cite{Scheimbauer}, \cite{JFS}), so that the 2-category $\BrTens$ is obtained from this 4-category by taking the homotopy 2-category.

Unpacking, a \emph{quantum Manin pair} (see \cref{def:quantumManinpair}) is a pair $(\cC, \cD)$ consisting of a braided monoidal category $\cC$ acting in a compatible way on a monoidal category $\cD$ via a monoidal functor $T\colon \cC\rightarrow \cD$. The right adjoint $T^R\colon \cD\rightarrow \cC$ is lax monoidal and in this setting the algebra $T^R(1)\in\cC$ is commutative (see \cref{prop:REAcommutative}). For an algebra $A\in\cD$ we define the quantum moment map to be an algebra map $\mu\colon \cF=TT^R(1)\rightarrow A$ such that its adjoint $T^R(1)\rightarrow T^R(A)$ is a central map of algebras in $\cC$. A closely related formalism on the classical level has previously appeared in \cite{SeveraCenters}.

It is instructive to consider the following example. For a closed subgroup $G\subset D$ we have a quantum Manin pair $(\Rep D, \Rep G)$ where both categories are symmetric monoidal and $T\colon \Rep D\rightarrow \Rep G$ is the symmetric monoidal restriction functor. Then $T^R(1)$ is the algebra $\cO(D/G)\in\Rep D$ and so a quantum moment map is a map $\cO(D/G)\rightarrow A$ of $G$-representations.

Note that the notion of a 1-shifted Lagrangian $L\rightarrow [G\backslash D/G]$ can also be interpreted purely within the 2-category $\LagrCorr_2$. Its quantization (i.e. the corresponding notion in $\BrTens$) is therefore given as follows. Consider the monoidal category $\HC=\cD\otimes_\cC \cD$ quantizing $[G\backslash D/G]$. Then the quantization of a 1-shifted Lagrangian $L\rightarrow [G\backslash D/G]$ is given by a module category over $\HC$. We show that if $\mu\colon TT^R(1)\rightarrow A$ is a quantum moment map in the above sense, then $\LMod_A$ indeed becomes an $\HC$-module category in \cref{prop:momentmapHCmodule}.

To relate our definition of quantum moment maps to Varagnolo--Vasserot's, we also consider \emph{quantum Manin triples} (see \cref{def:quantumManintriple}). Unpacking the categorical definition, a quantum Manin triple consists of a braided monoidal category $\cC$, a pair of monoidal categories $\cD, \cE$ such that $(\cC, \cD)$ and $(\cC, \cE)$ are quantum Manin pairs and a monoidal functor $\cE\otimes_{\cC} \cD\rightarrow \Mod_k$. Here $\cE\otimes_{\cC} \cD$ carries a monoidal structure such that the projection $\cE^{\mop}\otimes \cD\rightarrow \cE\otimes_{\cC} \cD$ is monoidal (see \cref{sect:monoidalmodule} for details). In particular, $\cE\otimes_{\cC} \cD\rightarrow \Mod_k$ gives rise to monoidal functors $F\colon\cD\rightarrow \Mod_k$ and $\tilde{F}\colon \cE^{\mop}\rightarrow \Mod_k$.

The reader may have noticed that we have not included any nondegeneracy assumptions into our definitions of quantum Manin pairs and quantum Manin triples while on the classical level we consider shifted symplectic rather than just shifted Poisson structures. The author is not aware of any nondegeneracy assumptions one may put on quantum Manin pairs which are satisfied in all examples of interest. Furthermore, such nondegeneracy assumptions are not necessary for the applications we consider. In the setting of fusion categories, a closely related definition of quantum Manin pairs and quantum Manin triples was given in \cite[Section 4]{DMNO} and we refer the reader there for possible nondegeneracy assumptions (however, none of the categories we consider are fusion).

We show that a quantum Manin triple encodes a wealth of information: there is an important algebra $\cF=TT^R(1)\in\cD$, a monoidal category $\HC=\cD\otimes_\cC\cD$, a pair of bialgebras $FF^R(k)$ and $\tilde{F}\tilde{F}^R(k)$, an algebra map $F(\cF)\rightarrow \tilde{F}\tilde{F}^R(k)$ and a skew-Hopf pairing
\[\ev\colon \tilde{F}\tilde{F}^R(k)\otimes FF^R(k)\longrightarrow k\]
which allows to turn $FF^R(k)$-comodules into $\tilde{F}\tilde{F}^R(k)$-modules, i.e. it gives a functor
\[\CoMod_{FF^R(k)}\longrightarrow \LMod_{\tilde{F}\tilde{F}^R(k)}.\]
Let us explain this structure in examples:
\begin{itemize}
\item (See \cref{sect:classicalManinTriple}). Given an algebraic group $G$, we have a quantum Manin triple
\[(\CoMod_{\U\g}(\Rep G), \Rep G, \CoMod_{\U\g}).\]
In this case $\cF=\U\g\in\Rep G$, $FF^R(k) = \cO(G)$, $\tilde{F}\tilde{F}^R(k)=\U\g$ and $F(\cF)\rightarrow \tilde{F}\tilde{F}^R(k)$ is an isomorphism and $\ev\colon \U\g\otimes \cO(G)\rightarrow k$ is the obvious pairing. The category $\HC$ is the monoidal category of Harish--Chandra bimodules, i.e. $\U\g$-bimodules where the diagonal action integrates to a $G$-action. Let us recall that the category of Harish--Chandra bimodules has a long history in representation theory: for instance, they are related to blocks in category $\cO$ \cite{BernsteinGelfand} and to character sheaves \cite{BFO}.

\item (See \cref{sect:quantumgroups}). Let $\Rep_q(G)$ be the category of representations of the Lusztig form of the quantum group at an arbitrary quantum parameter $q$. Then
\[(\Rep_q G\otimes \Rep_q (G)^{\bop}, \Rep_q(G), \Rep_q(G^*))\]
is a quantum Manin triple, where $\Rep_q(G^*)$ is the category of comodules over the De Concini--Kac form $\U_q^{DK}(\g)$ of the quantum group. In this case $\cF=\cO_q(G)\in\Rep_q(G)$ is the reflection equation algebra, $FF^R(k) = \cO_q(G)$ and $\tilde{F}\tilde{F}^R(k)=\U_q^{DK}(\g)$. The map $\cF\rightarrow \tilde{F}\tilde{F}^R(k)$ is the Rosso homomorphism $\cO_q(G)\rightarrow \U_q^{DK}(\g)$ (see e.g. \cite[Proposition 10.16]{KlimykSchmudgen}) and the functor $\Rep_q(G)\rightarrow \LMod_{\U_q^{DK}(\g)}$ realizes objects in $\Rep_q(G)$ as modules over the De Concini--Kac quantum group. Note that for $q$ generic this functor is fully faithful. The category $\HC$ is the quantum version of the category of Harish--Chandra bimodules and is equivalent to the Hochschild homology category of $\Rep_q(G)$.
\end{itemize}

Let us state informally some of our results.

\begin{theorem}[\Cref{thm:quantummomentmapManintriple}]
Suppose $(D, H, H^\vee)$ is a triple of Hopf algebras giving rise to a quantum Manin triple $(\CoMod_D, \CoMod_H, \CoMod_{H^\vee})$. Then an algebra map $\mu\colon \cF\rightarrow A$ in $H$-comodules is a quantum moment map iff it satisfies
\[\mu(h)a = (h_{(1)}\triangleright a)\mu(h_{(2)})\]
for every $a\in A$ and $h\in \cF$, where $\Delta(h) = h_{(1)}\otimes h_{(2)}\in H^\vee\otimes \cF$ is the $H^\vee$-coaction on $\cF$.
\end{theorem}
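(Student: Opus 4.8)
The plan is to reduce the statement to an explicit computation in comodules by unpacking the abstract definition. By definition, the algebra map $\mu\colon\cF\to A$ is a quantum moment map precisely when its adjunct $\tilde\mu\colon T^R(1)\to T^R(A)$ under $T\dashv T^R$ is a central map of algebras in the braided category $\cC=\CoMod_D$. First I would write down this centrality as the single identity $m_{T^R(A)}\bigl(\tilde\mu\otimes\id\bigr)=m_{T^R(A)}\bigl(\id\otimes\tilde\mu\bigr)\beta_{T^R(1),\,T^R(A)}$, where $\beta$ is the braiding of $\CoMod_D$ and $m_{T^R(A)}$ is the lax-monoidal multiplication making $T^R(A)$ an algebra, and then translate each ingredient into Hopf-algebraic data.

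Second, I would make the relevant structures explicit. The braiding $\beta$ on $\CoMod_D$ is given by the universal $R$-form of $D$; the functor $T$ is corestriction of comodules along the structure map relating $D$ and $H$, with $T^R$ its cotensor-product adjoint. Using the Manin-triple factorization of $D$ through $H$ and $H^\vee$, the $D$-comodule structure on $T^R(1)$ decomposes into its $H$-comodule part, recording $\cF\in\CoMod_H$, together with an $H^\vee$-coaction $\delta(h)=h_{(1)}\otimes h_{(2)}\in H^\vee\otimes\cF$. The action $h_{(1)}\triangleright a$ appearing in the formula is then identified with the module structure that the skew-Hopf pairing $\ev$ induces on the $H$-comodule $A$, i.e. with the functor $\CoMod_{H}\to\LMod_{H^\vee}$ (the comodule example of $\CoMod_{FF^R(k)}\to\LMod_{\tilde F\tilde F^R(k)}$), so that $h_{(1)}\triangleright a=a_{(0)}\,\ev(h_{(1)},a_{(1)})$ for the $H$-coaction $a\mapsto a_{(0)}\otimes a_{(1)}$.

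Third, I would transport the centrality identity across the adjunction $T\dashv T^R$. Applying $T$ and composing with the counit $TT^R\to\id$ converts the multiplication on $T^R(A)$ and the braiding of $\cC$ into the module action of $\cC$ on $\cD$ together with the multiplication of $A$; evaluating the resulting morphism on elements then turns the right-hand side of the centrality square into $(h_{(1)}\triangleright a)\,\mu(h_{(2)})$ and the left-hand side into $\mu(h)a$, which is exactly the claimed relation. Since the adjunction is a bijection on morphism spaces and the $R$-form is invertible, each step is reversible, so the computation yields both implications at once (the hypothesis that $\mu$ is an algebra map being used only to pass $T$ and the counit through the multiplications).

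The main obstacle I expect is precisely the identification carried out in the second step: showing that the half-braiding $\beta$, evaluated with one leg in the image of $T^R(1)$, reproduces the $H^\vee$-coaction $\delta(h)=h_{(1)}\otimes h_{(2)}$ paired against the $H$-coaction on $A$. This is where the full force of the Manin-triple hypothesis enters, and the delicate bookkeeping will be tracking the Sweedler indices, the placement of the antipode coming from the right adjoint $T^R$, and the choice of $\beta$ versus $\beta^{-1}$, so that the two sides of the centrality square land on the two sides of the stated identity rather than on a variant with the factors transposed.
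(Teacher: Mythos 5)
Your plan hinges, in steps one and three, on the equivalence ``$\mu$ is a quantum moment map $\iff$ the adjoint $T^R(1)\rightarrow T^R(A)$ is central in $\cC$.'' That equivalence is \emph{not} the definition: \cref{def:quantummomentmap} is the commutativity of a diagram in $\cD$ built from the field-goal isomorphism $\tau_{T^R(1),A}$, and the centrality characterization is \cref{prop:momentmapcentral}, which is proved only under the monadic-case hypotheses of \cref{prop:HCmonadic} ($T$ satisfies the projection formula and $T^R$ is monadic, in particular faithful). For the corestriction functor $T\colon \CoMod_D\rightarrow\CoMod_H$ these hypotheses are neither assumed in the theorem nor verified in the paper, and your ``each step is reversible'' glosses over exactly where they enter: applying $T$ and the counit to the centrality identity produces an identity between maps $\cF\otimes TT^R(A)\rightarrow A$, namely the moment-map identity precomposed with $\id_\cF\otimes\epsilon_A$; to cancel that precomposition you need $\epsilon_A\colon TT^R(A)\rightarrow A$ to be an epimorphism, i.e.\ $T^R$ faithful. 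Likewise $\Hom_\cD(\cF\otimes A, A)$ and $\Hom_\cC(T^R(1)\otimes T^R(A), T^R(A))$ are not matched by the bare adjunction bijection; relating them uses the projection-formula morphism $X\otimes T^R(V)\rightarrow T^R(T(X)\otimes V)$, which is an isomorphism only under the same unverified hypotheses. So as written, your argument establishes ``moment map $\Rightarrow$ centrality $\Rightarrow$ stated identity'' at best in one direction, and the converse has a genuine gap.

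The irony is that the material you flag as the ``main obstacle'' in your second step is already the whole proof, and the detour through centrality is unnecessary. The paper works directly with \cref{def:quantummomentmap}: writing the $D$-coaction on $\cF$ as $\Delta(h)=h_{(0)}\otimes h'_{(1)}\in\cF\otimes D$, the isomorphism $\tau_{T^R(1),A}$ in $\CoMod_H$ is given explicitly by $h\otimes a\mapsto \br_H(h'_{(1)},a_{(1)})\,a_{(0)}\otimes h_{(0)}$, so the moment-map diagram is literally the identity $\mu(h)a=\br_H(h'_{(1)},a_{(1)})\,a_{(0)}\,\mu(h_{(0)})$; the compatibility $\br_H(d,y)=\ev(g(d),y)$ from the definition of the triple converts this to $\ev(h_{(1)},a_{(1)})\,a_{(0)}\,\mu(h_{(0)})$ with $h_{(1)}=g(h'_{(1)})$ the $H^\vee$-coaction leg, and \eqref{eq:comoduletomodule} identifies $\ev(h_{(1)},a_{(1)})\,a_{(0)}=h_{(1)}\triangleright a$. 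Every step here is an equality of elements, so both implications come for free without any faithfulness of $T^R$. To repair your proof, replace the centrality reformulation by this direct unpacking of $\tau$ (your second step essentially contains it); no antipode bookkeeping actually arises, since $\tau$ is computed from the coaction and $\br_H$ alone.
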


Thus, our quantum moment maps reduce to the quantum moment maps of Varagnolo--Vasserot when we consider quantum Manin triples coming from Hopf algebras.

\begin{theorem}[\Cref{thm:momentmapclassicaldegeneration}]
Suppose $(D, G)$ is a group pair and $(\Rep_\hbar D, \Rep_\hbar G)$ is a quantum Manin pair quantizing it. If an algebra map $\mu_\hbar\colon \cF\rightarrow A_\hbar$ in $\Rep_\hbar G$ is a quantum moment map, then its value $\mu_0\colon \cO(D/G)\rightarrow A_0$ at $\hbar=0$ is a classical moment map.
\end{theorem}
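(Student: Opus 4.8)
The plan is to degenerate the two pieces of data defining a quantum moment map---that $\mu_\hbar$ is an algebra map in $\Rep_\hbar G$, and that its adjoint $\tilde\mu_\hbar\colon T^R(1)\to T^R(A_\hbar)$ is a \emph{central} algebra map in $\Rep_\hbar D$---and to match the resulting zeroth- and first-order-in-$\hbar$ data with the equivariance and bracket-compatibility conditions defining a classical moment map.

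First I would set up the semiclassical dictionary. At $\hbar=0$ the braided monoidal category $\Rep_\hbar D$ specializes to the symmetric monoidal category $\Rep D$, and by the computation of $T^R(1)$ for the restriction functor recalled in the introduction, the algebra $\cF=TT^R(1)$ specializes to the commutative algebra $\cO(D/G)\in\Rep G$. Hence $\mu_0$ is a map of commutative algebras in $\Rep G$, that is, a $G$-equivariant algebra map $\cO(D/G)\to A_0$; dually this is precisely a $G$-equivariant map $\mathrm{Spec}\,A_0\to D/G$, which supplies the equivariance half of a classical moment map at once. In parallel I would fix the deformation data: write the multiplication on $A_\hbar$ as $m_\hbar=m_0+\hbar\,m_1+O(\hbar^2)$, so that the antisymmetrization of $m_1$ is the (quasi-)Poisson bracket on $A_0$, and write the braiding of $\Rep_\hbar D$ as $\beta_\hbar=\beta_0+\hbar\,\beta_1+O(\hbar^2)$, where $\beta_0$ is the symmetry of $\Rep D$ and $\beta_1$ records the classical $r$-matrix of the Poisson-Lie data attached to $(D,G)$.

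Next I would unpack centrality and expand it in $\hbar$. In a braided monoidal category an algebra map $\phi\colon B\to C$ is central when
\[
m_C\circ(\phi\otimes\id)=m_C\circ(\id\otimes\phi)\circ\beta_{B,C}.
\]
Applying this to $\tilde\mu_\hbar$, the order-$\hbar^0$ equation reads $m_0\circ(\tilde\mu_0\otimes\id)=m_0\circ(\id\otimes\tilde\mu_0)\circ\beta_0$, which holds automatically because $T^R(A_0)$ is commutative and $\beta_0$ is the flip. At order $\hbar^1$ the two terms carrying the unknown first-order correction $\tilde\mu_1$ cancel against each other, again by commutativity at $\hbar=0$, and what survives is
\[
m_1\circ(\tilde\mu_0\otimes\id)-m_1\circ(\id\otimes\tilde\mu_0)\circ\beta_0=m_0\circ(\id\otimes\tilde\mu_0)\circ\beta_1.
\]
The left-hand side is the (quasi-)Poisson bracket $\{\tilde\mu_0(-),-\}$, and the right-hand side is the $r$-matrix action of $\beta_1$; transporting this identity through the adjunction $T\dashv T^R$ back to $\cD$ yields exactly $\{\mu_0(f),a\}_X=(\text{infinitesimal action on }X\text{ determined by }f)$ for $f\in\cO(D/G)$ and $a\in A_0$, which is the bracket-compatibility condition of an Alekseev--Kosmann-Schwarzbach / Dirac moment map to $D/G$. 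This first-order identity is the quasiclassical shadow of the Varagnolo--Vasserot-type relation $\mu(h)a=(h_{(1)}\triangleright a)\mu(h_{(2)})$ of \Cref{thm:quantummomentmapManintriple}.

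The main obstacle is the bookkeeping in this last step: one must identify $\beta_1$, pushed through $T^R$ and evaluated on $\cO(D/G)$, with the bivector defining the exact Dirac / quasi-Poisson structure on $D/G$ in the sense of Bursztyn--Crainic. Concretely this amounts to checking that the semiclassical limit of the quantum Manin pair $(\Rep_\hbar D,\Rep_\hbar G)$ recovers the Poisson-geometric data of the group pair $(D,G)$ and that $T^R$ intertwines the $r$-matrix action with the projection $\fd\to\fd/\g\cong \T_{eG}(D/G)$; granting this compatibility of first-order data, the order-$\hbar^1$ centrality equation matches the classical condition on the nose. I would also record that the remaining piece of data---$\mu_\hbar$ being an algebra map---degenerates harmlessly: at $\hbar=0$ it just says $\mu_0$ is an algebra map, and its first-order part is consistent with, rather than additional to, the bracket condition extracted from centrality, so no further classical constraint is produced.
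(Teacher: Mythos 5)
Your expansion is, in substance, the paper's own argument (\cref{sect:classicallimit}): work over $k[\hbar]/\hbar^2$, split the data as $m_\hbar=m_0+\hbar m_1$, $\mu_\hbar=\mu^*+\hbar\mu_1$, $\br=\epsilon\otimes\epsilon+\hbar r$, note that the order-$\hbar^0$ equation is commutativity of $A_0$, that the two $\mu_1$-terms cancel for the same reason, and that the surviving order-$\hbar$ identity is the bracket equation with an $r$-matrix term on the right. One structural difference: the paper expands \cref{def:quantummomentmap} directly in $\Rep_\hbar G$, where $\tau_{T^R(1),A}$ is written out through the coquasitriangular pairing $\br_G$, whereas you route through the centrality of the adjoint map $T^R(1)\to T^R(A_\hbar)$ in $\Rep_\hbar D$. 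That reformulation is \cref{prop:momentmapcentral}, which is only established under the projection formula and monadicity of $T^R$ (the hypotheses of \cref{prop:HCmonadic}); you would have to verify these for the $\hbar$-deformed comodule categories, or simply expand the defining diagram in $\cD$ as the paper does and avoid the issue entirely.

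The genuine gap is precisely the step you label the ``main obstacle'' and then grant. The classical condition in \cref{def:classicalmomentmap} is stated relative to a Lagrangian complement $\h\subset\fd$, via the projection $p_\h\colon\fd^*\cong\fd\rightarrow\g$ (not $\fd\rightarrow\fd/\g$ as you wrote), so the proof must \emph{produce} such a complement from the quantization data before the order-$\hbar$ identity can be recognized as a classical moment map equation. The paper does this in the lemma preceding \cref{thm:momentmapclassicaldegeneration}: since $\g\subset\fd$ is Lagrangian and the semiclassical limit of the mixed coquasitriangular structure satisfies $r_G=r\in\fd\otimes\g$, the map $r_2\colon\g^*\rightarrow\fd$ is injective with image a Lagrangian complement $\h$ to $\g$, and $r$ is then the canonical element of $\g^*\otimes\g\subset\fd\otimes\g$, so that $r_1=p_\h$. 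Only after this identification does the surviving first-order equation $\{\mu^*f_1,f_2\}=\sum_i\mu^*(\tilde{a}(e_i).f_1)\,a(p_\h(e^i)).f_2$ coincide on the nose with \cref{def:classicalmomentmap}; in your write-up the equality of $m_0\circ(\id\otimes\tilde\mu_0)\circ\beta_1$ with the Alekseev--Kosmann-Schwarzbach/Dirac condition is asserted rather than proved. The missing lemma is short (a duality argument from the Lagrangian condition), so the gap is local and fixable, but it is the one genuinely geometric input of the proof; everything else in your proposal, including the treatment of the equivariance half at $\hbar=0$, matches the paper.
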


Combining the two results, we conclude that a classical degeneration of Varagnolo--Vasserot moment maps gives Alekseev--Kosmann-Schwarzbach moment maps.

\subsection*{Organization of the paper}

In \cref{sect:background} we recall the necessary facts about monoidal categories that we will use. Since we are interested in categories such as $\Rep G$, the category of all (not necessarily finite-dimensional) representations, we work in the setting of locally presentable categories. It is also a convenient setting for us since the 2-category $\PrL$ of such admits a natural symmetric monoidal structure. In this section we define the notion of a $\cC$-monoidal category, i.e. a monoidal category $\cD$ with a compatible action of a braided monoidal category $\cC$. The pair $(\cC, \cD)$ can be thought of as an algebra in $\PrL$ over the two-dimensional Swiss-cheese operad similar to the description of $\bE_2$-algebras in $\PrL$ in terms of braided monoidal categories. In particular, we show that the relative tensor product $\cE\otimes_\cC \cD$ of $\cC$-monoidal categories $\cE$ and $\cD$ carries a natural monoidal structure and describe its universal property (see \cref{prop:monoidaltensorproduct}).

In \cref{sect:Maninpairs} we give the main definitions of the paper. There we define and study quantum Manin pairs and quantum Manin triples. In particular, we describe the associated algebraic structures, such as analogs of the reflection equation algebra, Rosso homomorphism and the category of Harish--Chandra bimodules. \Cref{sect:coalgebras,sect:classicalManinTriple,sect:quantumgroups} are devoted to examples of quantum Manin triples from Hopf algebras, classical Lie algebras and quantum groups.

Given a quantum Manin pair, we define in \cref{sect:momentmaps} a quantum moment map (see \cref{def:quantummomentmap}) and give several ways to describe them (see \cref{prop:momentmapcentral}). An important observation is that the data of a quantum moment map allows one to extend a $\cD$-module structure to an $\HC$-module structure. We also describe a procedure of fusion of algebras equipped with quantum moment maps. On the level of categories, given two $\HC$-module categories $\cM_1, \cM_2$ it is simply given by the relative tensor product $\cM_1\otimes_\cD \cM_2$.

Finally, in \cref{sect:classicalsetting} we recall definitions of quasi-Poisson groups and quasi-Poisson spaces and provide a definition of moment maps in this setting (\cref{def:classicalmomentmap}). This definition is a slight variant of the definition given in \cite{AKS} and we show that the two are equivalent (see \cref{prop:AKSequivalence}). We also show that for moment maps factoring as $X\rightarrow G^*\rightarrow D/G$, this definition reduces to Lu's definition of the moment map (see \cref{lm:LuAKS}). In \cref{sect:classicallimit} we prove that the classical degeneration of quantum moment maps recovers classical moment maps.

\subsection*{Conventions}

\begin{itemize}
\item We work over the ground commutative ring $k$.

\item $\PrL$ denotes the symmetric monoidal 2-category of $k$-linear locally presentable categories and $k$-linear colimit-preserving functors (see \cite{AdamekRosicky} for a general theory and \cite[Section 2]{BCJF} for a discussion). For $\cC,\cD\in\PrL$ we denote by $\cC\otimes\cD$ the corresponding symmetric monoidal structure. The unit object $\Mod_k\in\PrL$ is the category of $k$-modules; it is compactly generated by finitely presentable $k$-modules.

\item Given two locally presentable categories $\cC,\cD\in\PrL$ we denote by $\FunL(\cC, \cD)\in\PrL$ the category of $k$-linear colimit-preserving functors.
\end{itemize}

\subsection*{Acknowledgements}

The author would like to thank David Jordan for many conversations about quantum groups which in particular inspired the writing of this paper. This research was supported by the NCCR SwissMAP grant of the Swiss National Science Foundation.

\section{Background}

\label{sect:background}

\subsection{Monoidal categories}

We refer to \cite{BJS}, \cite{DSPS}, \cite{EGNO} for a more complete discussion of the notions which we will only briefly recall here.

Let us recall a description of totalizations of cosimplicial categories (i.e. pseudolimits with shape $\Delta$). Suppose $\Delta\rightarrow \PrL$ is a pseudofunctor defining a cosimplicial category $\cC^\bullet$. Denote by $s^i\colon \cC^n\rightarrow \cC^{n-1}$ and $d^i\colon \cC^n\rightarrow \cC^{n+1}$ the codegeneracy and coboundary functors. The limit of $\cC^\bullet$ may be computed by the category of Cartesian sections of the Grothendieck construction (see \cite[Expos\'{e} VI, Chapitre 6.11]{SGA42}) which has the following explicit description.

\begin{lm}
Let $\cC^\bullet$ be a cosimplicial category. Then the limit of $\cC^\bullet$ is equivalent to the following category:
\begin{itemize}
\item Its objects are pairs $(x, \alpha)$ where $x\in\cC^0$ and $\alpha\colon d^1(x)\xrightarrow{\sim} d^0(x)$ such that the diagrams
\[
\xymatrix{
& d^0d^1(x) \ar^{d^0(\alpha)}[r] & d^0d^0(x) & \\
d^2d^0(x) \ar^{\sim}[ur] &&& d^1d^0(x) \ar_{\sim}[ul] \\
& d^2d^1(x) \ar^{d^2(\alpha)}[ul] \ar^{\sim}[r] & d^1d^1(x) \ar_{d^1(\alpha)}[ur]
}
\qquad
\xymatrix{
s^0 d^1(x) \ar^{s^0(\alpha)}[rr] \ar_{\sim}[dr] && s^0 d^0(x) \ar^{\sim}[dl] \\
& x
}
\]
commute.

\item Its morphisms $(x,\alpha)\rightarrow (y, \beta)$ are morphisms $f\colon x\rightarrow y$ such that the diagram
\[
\xymatrix{
d^1(x) \ar^{\alpha}[r] \ar^{d^1(f)}[d] & d^0(x) \ar^{d^0(f)}[d] \\
d^1(y) \ar^{\beta}[r] & d^0(y)
}
\]
commutes.
\end{itemize}
\label{lm:Totofcategories}
\end{lm}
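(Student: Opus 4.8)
The plan is to make explicit the comparison between the abstract limit of the cosimplicial diagram $\cC^\bullet$ and the concrete category of descent data described in the statement. The key input is that the limit of a cosimplicial category $\cC^\bullet$, viewed as a pseudofunctor $\Delta\to\PrL$, is computed by the category of Cartesian sections of the associated Grothendieck construction, as cited from \cite{SGA42}. A Cartesian section assigns to each object $[n]\in\Delta$ an object of $\cC^n$ together with coherent compatibility isomorphisms along all morphisms of $\Delta$, subject to the cocycle conditions coming from composition. So the work is entirely to unpack this data and show that, because $\Delta$ is generated by the coface and codegeneracy maps subject to the cosimplicial identities, a Cartesian section is determined by a small amount of data satisfying the two displayed coherence diagrams.

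First I would recall that $\Delta$ is generated under composition by the cofaces $d^i\colon[n-1]\to[n]$ and codegeneracies $s^i\colon[n+1]\to[n]$, subject to the standard cosimplicial identities; equivalently, every object $[n]$ is canonically a coproduct/iterated pushout built from $[0]$ and $[1]$, so a section is pinned down by its values and transition data in low degrees. Concretely, a Cartesian section $F$ records an object $x=F([0])\in\cC^0$, an object $F([1])\in\cC^1$, and the Cartesian (invertible) structure constraints along $d^0,d^1\colon[0]\to[1]$, which produce isomorphisms $d^0(x)\cong F([1])$ and $d^1(x)\cong F([1])$. Composing these gives the single isomorphism $\alpha\colon d^1(x)\xrightarrow{\sim}d^0(x)$, and I would check that the higher values $F([n])$ for $n\ge 2$ are then forced, up to canonical isomorphism, by Cartesianness, so no independent data survives in higher degrees. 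This is the step that reduces an a priori infinite tower of coherences to the two diagrams in the statement.

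Next I would derive the two coherence conditions. The triangle involving $s^0$ expresses compatibility of $\alpha$ with the codegeneracy $s^0\colon[1]\to[0]$: applying $s^0$ to the two coface constraints and using the cosimplicial identities $s^0 d^0=\id=s^0 d^1$ gives the commuting triangle identifying $s^0(\alpha)$ with the identity on $x$ through the canonical isomorphisms $s^0 d^i(x)\cong x$. The hexagon (cocycle) condition comes from degree two: the three cofaces $d^0,d^1,d^2\colon[1]\to[2]$ and the simplicial identities $d^i d^j=d^{j+1}d^i$ for $i\le j$ relate the various composites $d^a d^b(x)$, and Cartesianness forces $d^0(\alpha)$, $d^1(\alpha)$, $d^2(\alpha)$ to fit into the displayed hexagon built from these canonical identifications. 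I would verify that these two conditions are not only necessary but sufficient: given $(x,\alpha)$ satisfying them, one reconstructs a full Cartesian section by transporting $x$ along all morphisms of $\Delta$, the hexagon guaranteeing well-definedness on all composites and the triangle handling the degeneracies.

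Finally I would treat morphisms, which is the easier half: a morphism of Cartesian sections is a natural transformation, hence a compatible family of morphisms $F([n])\to G([n])$, and by the same reduction it is determined by its degree-zero component $f\colon x\to y$, with the single naturality square against $\alpha$ and $\beta$ recording compatibility along $d^0,d^1$; all higher compatibilities follow from the Cartesian structure. I expect the main obstacle to be the bookkeeping in the reconstruction step, namely verifying rigorously that the higher-degree values and transition isomorphisms of a Cartesian section are uniquely determined by $(x,\alpha)$ and that the hexagon is exactly the obstruction to gluing the degree-two constraints; managing the canonical isomorphisms $d^a d^b(x)\cong d^{b+1}d^a(x)$ coherently is where care is needed, though conceptually it is just the standard fact that a pseudolimit over $\Delta$ is a $2$-categorical descent (stack) condition truncated to the first two cosimplicial levels.
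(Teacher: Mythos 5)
Your proposal follows exactly the paper's route: the paper gives no proof beyond citing the computation of the pseudolimit as the category of Cartesian sections of the Grothendieck construction (SGA4, Expos\'{e} VI, 6.11) and asserting the explicit description, which is precisely the unpacking you carry out. Your fleshed-out reduction --- values in degrees $\geq 2$ forced by Cartesianness, the hexagon as the degree-two cocycle condition via $d^id^j = d^{j+1}d^i$ for $i\leq j$, the triangle from $s^0d^0 = s^0d^1 = \id$ --- is the standard descent-data argument the paper leaves implicit, so your proof is correct and takes essentially the same approach.
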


\begin{remark}
From \cref{lm:Totofcategories} we see that we may truncate the cosimplicial object to the first three terms without changing the limit. See also \cite[Theorem 4.11]{Nunes}.
\label{rmk:totalizationtruncation}
\end{remark}

By a monoidal category we will always mean a presentably monoidal category, i.e. a locally presentable category $\cD\in\PrL$ equipped with a monoidal structure whose tensor product functor commutes with colimits in each variable. We denote by $\Alg(\PrL)$ the 2-category of monoidal categories (i.e. pseudo-algebra objects in $\PrL$). Given a monoidal category $\cD$, we denote by $\cD^{\mop}$ the same category equipped with the opposite tensor structure.

Given a monoidal category $\cD$ we denote by
\[\LMod_\cD = \LMod_\cD(\PrL)\]
the 2-category of (left) $\cD$-module categories which are assumed to be locally presentable and such that the action functor $\cD\otimes\cM\rightarrow \cM$ preserves colimits in each variable.

Given a left $\cD$-module category $\cM$ and a right $\cD$-module category $\cN$ we get a simplicial object
\begin{equation}
\simp{\cN\otimes\cM}{\cN\otimes\cD\otimes\cM}
\label{eq:barconstruction}
\end{equation}
in $\PrL$ where the maps come from the action functors on $\cN$ and $\cM$ and the monoidal structure on $\cD$. Note that by a simplicial object in $\PrL_k$ we mean a pseudofunctor $\Delta^{op}\rightarrow \PrL$. Unless the monoidal structure on $\cD$ and the $\cD$-actions on $\cM$ and $\cN$ are strict, this will not be a strict simplicial object.

\begin{defn}
Let $\cM$ be a left $\cD$-module category and $\cN$ a right $\cD$-module category. Their \defterm{relative tensor product} is the colimit
\begin{equation}
\cN\otimes_\cD\cM = \colim\left(\simp{\cN\otimes\cM}{\cN\otimes\cD\otimes\cM}\right)
\label{eq:relativetensorproduct}
\end{equation}
in $\PrL$.
\end{defn}

\begin{remark}
We may compute the pseudo-colimit in $\PrL$ as a homotopy colimit in the canonical model structure on $\PrL$, see \cite{Gambino}. Therefore, if $\cPrL_1$ denotes the $\infty$-category obtained by applying the Duskin nerve to the underlying $(2, 1)$-category of $\PrL$, the pseudo-colimit may also be computed as the $\infty$-categorical colimit in $\cPrL_1$.
\end{remark}

\begin{remark}
Since all functors in \eqref{eq:barconstruction} admit a right adjoint, we may compute the colimit in \eqref{eq:relativetensorproduct} as a limit of the cosimplicial diagram obtained by passing to right adjoints. In particular, from \cref{rmk:totalizationtruncation} we see that we may truncate the diagram \eqref{eq:barconstruction} to the first three terms without changing the colimit.
\end{remark}

The relative tensor product $\cN\otimes_\cD\cM$ satisfies the following universal property. Let $\cA\in\PrL$ be a category. Recall the following notion (see \cite[Definition 3.1]{ENO}, \cite[Definition 3.1]{DSPS}).

\begin{defn}
A \defterm{$\cD$-balanced functor} $F\colon \cN\times\cM\rightarrow \cA$ is a bifunctor preserving colimits in each variable equipped with an isomorphism
\[\alpha_{W, X, V}\colon F(W\otimes X, V)\cong F(W, X\otimes V)\]
natural in $X\in\cD, W\in\cN, V\in\cM$ which makes the obvious diagrams commute.
\end{defn}

\begin{prop}
Let $\cA\in\PrL$ be a locally presentable category. Then $\FunL(\cN\otimes_\cD\cM, \cA)$ is equivalent to the category of $\cD$-balanced functors $\cN\otimes\cM\rightarrow \cA$.
\label{prop:balancedfunctors}
\end{prop}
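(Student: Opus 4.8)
The plan is to deduce the universal property directly from the defining colimit \eqref{eq:relativetensorproduct} by dualizing. The key input is that $\PrL$ is a closed symmetric monoidal 2-category with internal hom $\FunL(-,-)$, so that for a fixed target $\cA\in\PrL$ the functor $\FunL(-,\cA)$ carries colimits in $\PrL$ to limits. Working, as in the remark following the definition, inside the $\infty$-category $\cPrL_1$ — where the pseudo-colimit defining $\cN\otimes_\cD\cM$ is an honest $\infty$-categorical colimit — I would therefore obtain an equivalence
\[
\FunL(\cN\otimes_\cD\cM,\cA)\;\simeq\;\lim\left(\cosimp{\FunL(\cN\otimes\cM,\cA)}{\FunL(\cN\otimes\cD\otimes\cM,\cA)}\right),
\]
exhibiting the left-hand side as the totalization of the cosimplicial category obtained by applying $\FunL(-,\cA)$ termwise to the bar construction \eqref{eq:barconstruction}.

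Next I would identify this cosimplicial category concretely. In degree zero we have $\FunL(\cN\otimes\cM,\cA)$, which by the defining universal property of the tensor product in $\PrL$ is the category of bifunctors $\cN\times\cM\to\cA$ preserving colimits in each variable; in degree one we have $\FunL(\cN\otimes\cD\otimes\cM,\cA)$, the cocontinuous trifunctors. The two coboundary maps $d^0,d^1$ are precomposition with the two face maps of \eqref{eq:barconstruction}: one uses the right $\cD$-action on $\cN$ and sends $F$ to $(W,X,V)\mapsto F(W\otimes X,V)$, the other uses the left $\cD$-action on $\cM$ and sends $F$ to $(W,X,V)\mapsto F(W,X\otimes V)$, while the codegeneracy inserts the monoidal unit of $\cD$.

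With this dictionary in place I would invoke \cref{lm:Totofcategories}, truncating to the first three terms as permitted by \cref{rmk:totalizationtruncation}, to read off the totalization explicitly. An object is a pair $(F,\alpha)$ with $F\in\FunL(\cN\otimes\cM,\cA)$ and an isomorphism $\alpha\colon d^1(F)\xrightarrow{\sim}d^0(F)$, which unwinds precisely to a natural isomorphism $\alpha_{W,X,V}\colon F(W\otimes X,V)\cong F(W,X\otimes V)$; the hexagonal coherence diagram of \cref{lm:Totofcategories} becomes the associativity compatibility of $\alpha$ with the monoidal structure on $\cD$ and the two module actions, and the codegeneracy triangle becomes its unit normalization — together these are exactly the \emph{obvious diagrams} required of a $\cD$-balanced functor. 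A morphism in the totalization is a natural transformation $f\colon F\to G$ commuting with the structure isomorphisms, matching the morphisms of balanced functors. This yields the claimed equivalence.

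The main obstacle I anticipate is bookkeeping rather than conceptual: the bar construction \eqref{eq:barconstruction} is only a pseudo-simplicial object (the actions and the monoidal structure on $\cD$ are not strict), so the identifications of $d^0,d^1$ with the action maps and of the coherence cells with the balancing axioms carry along nontrivial associativity and unit isomorphisms that must be tracked when matching the hexagon of \cref{lm:Totofcategories} against the pentagon-type diagram defining a balanced functor. Verifying that these two families of coherence data correspond — and that the direction of the comparison isomorphism agrees with the convention $\alpha\colon d^1(x)\xrightarrow{\sim}d^0(x)$ — is the step requiring care; once the cosimplicial diagram is correctly normalized, the comparison is a direct reading-off.
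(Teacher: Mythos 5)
Your proposal is correct and follows essentially the same route as the paper: the paper likewise applies $\FunL(-,\cA)$ to the bar construction to convert the defining colimit into a cosimplicial limit, and then invokes \cref{lm:Totofcategories} to unpack objects of the totalization as pairs $(F,\alpha)$ whose coherence conditions are precisely the axioms of a $\cD$-balanced functor. Your added care about the pseudo-simplicial (non-strict) nature of \eqref{eq:barconstruction} and the direction conventions for $\alpha$ is sound bookkeeping that the paper leaves implicit.
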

\begin{proof}
The category $\FunL(\cN\otimes_\cD\cM, \cA)$ is equivalent to the limit
\[\lim\left(\cosimp{\FunL(\cN\otimes\cM, \cA)}{\FunL(\cN\otimes\cD\otimes\cM, \cA)}\right)\]
of the cosimplicial object in $\PrL$. So, by \cref{lm:Totofcategories} we get that an object in $\FunL(\cN\otimes_\cD \cM, \cA)$ is given by a colimit-preserving functor $F\colon \cN\otimes\cM\rightarrow \cA$ together with a natural transformation $\alpha$ as above which satisfies a pair of coherence relations. This is exactly the description of $\cD$-balanced functors $\cN\otimes\cM\rightarrow \cA$.
\end{proof}

\begin{defn}
Let $\cC$ be a monoidal category. An object $A\in\cC$ is \defterm{faithfully flat} if the functor $A\otimes -$ is conservative and preserves equalizers.
\end{defn}

We will use the following result to work with the category of comodules over a Hopf algebra (it was previously proved in \cite[Example 4.8]{BruguieresVirelizier} under similar assumptions).

\begin{thm}[Fundamental theorem of Hopf modules]
Let $\cC$ be a braided monoidal category (not necessarily locally presentable) which admits equalizers and $H\in\cC$ a faithfully flat Hopf algebra in $\cC$. Then there is an equivalence
\[\cC\xrightarrow{\sim} \CoMod_H(\LMod_H(\cC))\]
given by $V\mapsto H\otimes V$.
\label{thm:hopfmodules}
\end{thm}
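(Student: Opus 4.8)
The plan is to construct an explicit inverse functor given by $H$-coinvariants and to verify that the unit and counit of the resulting adjunction are isomorphisms, with the antipode supplying the crucial inverse and faithful flatness controlling the equalizers. First I would record the structure of the functor in the statement: for $V\in\cC$ the object $H\otimes V$ carries the free left $H$-module structure via $m\otimes\id_V$ and the left $H$-comodule structure via $\Delta\otimes\id_V$, and one checks (using the braiding to insert the compatibility correctly) that these satisfy the Hopf-module axiom, so that $H\otimes-\colon\cC\rightarrow\CoMod_H(\LMod_H(\cC))$ is well defined. In the opposite direction, since $\cC$ admits equalizers, for a Hopf module $M$ with coaction $\rho_M\colon M\rightarrow H\otimes M$ I would define the coinvariants
\[
M^{\mathrm{co}H}=\mathrm{eq}\!\left(\rho_M,\ \eta_H\otimes\id_M\colon M\rightrightarrows H\otimes M\right),
\]
which is functorial in $M$ by the universal property of equalizers, and take $M\mapsto M^{\mathrm{co}H}$ as the candidate inverse functor.

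Next I would exhibit the two natural isomorphisms. For the composite $\cC\rightarrow\CoMod_H(\LMod_H(\cC))\rightarrow\cC$, the counit axiom of $H$ shows that $\eta_H\otimes\id_V\colon V\rightarrow H\otimes V$ equalizes $\Delta\otimes\id_V$ and $\eta_H\otimes\id_{H\otimes V}$ and is split by $\epsilon_H\otimes\id_V$, which identifies $(H\otimes V)^{\mathrm{co}H}\cong V$ naturally. For the other composite I would introduce the projection
\[
P=a_M\circ(S\otimes\id_M)\circ\rho_M\colon M\longrightarrow M,
\]
where $a_M$ is the $H$-action and $S$ the antipode, and check by a string-diagram computation, using the module--comodule compatibility together with the antipode axiom $m\circ(S\otimes\id_H)\circ\Delta=\eta_H\circ\epsilon_H$, that $P$ factors through $M^{\mathrm{co}H}$. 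The action then provides the counit $\theta_M\colon H\otimes M^{\mathrm{co}H}\rightarrow M$, whose candidate inverse is assembled from $(\id_H\otimes P)\circ\rho_M$.

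The step I expect to be the main obstacle is proving that $\theta_M$ is an isomorphism, and this is precisely where faithful flatness of $H$ enters in two ways. Because $H\otimes-$ preserves equalizers, it preserves the defining equalizer of $M^{\mathrm{co}H}$, so that $H\otimes M^{\mathrm{co}H}$ is recovered as the equalizer inside $H\otimes M\rightrightarrows H\otimes H\otimes M$; this is what lets $(\id_H\otimes P)\circ\rho_M$ genuinely land in $H\otimes M^{\mathrm{co}H}$ and furnish a two-sided inverse to $\theta_M$ after the diagram chase. Because $H\otimes-$ is in addition conservative, any morphism becoming invertible after tensoring with $H$ is already invertible, which reduces the remaining coherence verifications to the free case treated above. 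The technical heart is therefore the antipode bookkeeping establishing that $P$ is idempotent with image the coinvariants and that $\theta_M$ and its candidate inverse compose to the identities; in the braided setting these computations must be performed diagrammatically, with careful attention to the placement of the braiding in the Hopf-module axiom rather than by naive Sweedler manipulation.
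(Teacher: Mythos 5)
Your proposal is correct, but it follows a genuinely different route from the paper. The paper argues abstractly: it considers the free-module functor $F(V)=H\otimes V$ with right adjoint the forgetful functor $G$, uses faithful flatness (conservativity plus preservation of equalizers of $H\otimes-$) to verify the hypotheses of the Barr--Beck theorem and conclude that $F$ is \emph{comonadic}, and then identifies the resulting comonad $FG(V)=H\otimes V_{\triv}$ with the cofree-comodule comonad $T'(V)=H\otimes V$ via the natural transformation $\alpha_V(h\otimes v)=h_{(1)}\otimes h_{(2)}v$, whose inverse $\beta_V(h\otimes v)=h_{(1)}\otimes S(h_{(2)})v$ is where the antipode enters; the equivalence $\cC\cong\CoMod_H(\LMod_H(\cC))$ then falls out of comonadicity with no explicit inverse functor ever constructed. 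You instead give the classical Sweedler-style structure theorem adapted to the braided setting: you build the inverse functor explicitly as coinvariants $M^{\mathrm{co}H}$, use the antipode projection $P=a_M\circ(S\otimes\id)\circ\rho_M$ to show the counit $\theta_M$ is invertible, and invoke faithful flatness so that $H\otimes M^{\mathrm{co}H}$ is again an equalizer (making the candidate inverse land where it should, and the inclusion into $H\otimes M$ a monomorphism). Both proofs spend faithful flatness on essentially the same resource, and both localize the antipode in one computation; the trade-off is that the paper's Barr--Beck argument outsources all the hard verifications to a standard theorem at the cost of being non-explicit, while your argument is more elementary (no monadicity machinery) and produces the coinvariants functor explicitly, at the cost of the diagrammatic bookkeeping you flag --- verifying that $P$ is well defined and that $\theta_M$ and $(\id_H\otimes P)\circ\rho_M$ are mutually inverse, with the braiding correctly placed in the Hopf-module axiom. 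One small remark: once you have explicit two-sided inverse candidates, conservativity of $H\otimes-$ is not strictly needed for the final verifications (direct computation suffices), whereas in the paper it is an essential hypothesis of Barr--Beck; your use of it is harmless but could be trimmed.
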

\begin{proof}
Consider the functor $F\colon \cC\rightarrow \LMod_H(\cC)$ given by $F(V) = H\otimes V$ (the free left $H$-module). It admits a right adjoint $G\colon \LMod_H(\cC)\rightarrow \cC$ given by the forgetful functor.

The functor $G$ is conservative and preserves limits. Moreover, by assumption $GF$ is conservative and preserves equalizers. Therefore, $F$ is conservative and preserves equalizers. By the Barr--Beck theorem \cite[Theorem VI.7.1]{Maclane} we conclude that $F$ is comonadic. The comonad $T = FG$ on $\LMod_H(\cC)$ sends $V$ to $H\otimes V_{\triv}$, where $V_{\triv}$ denotes the trivial $H$-module structure (i.e. $H$ acts via the counit).

We have another comonad $T'$ on $\LMod_H(\cC)$ given by $T'(V) = H\otimes V$. We have a natural morphism
\[\alpha_V\colon H\otimes V_{\triv}\longrightarrow H\otimes V\]
given by
\[\alpha_V(h\otimes v) = h_{(1)}\otimes h_{(2)} v\]
and it is easy to see that it is compatible with the comonad structures on $T$ and $T'$. $\alpha$ has an inverse
\[\beta_V(h\otimes v) = h_{(1)}\otimes S(h_{(2)}) v\]
and hence
\[\cC\cong \CoAlg_{T'}(\LMod_H(\cC)) = \CoMod_H(\LMod_H(\cC))\]
\end{proof}

\begin{remark}
Note that if $\cC$ is abelian and the tensor product preserves direct sums, for any Hopf algebra $H\in\cC$ the functor $H\otimes -$ is conservative since $H\cong 1\oplus \ker(\epsilon)$.
\end{remark}

We will also often use the following statement (see \cite[Corollary 4.13]{BZBJ1}) which is an application of the Barr--Beck theorem \cite[Theorem VI.7.1]{Maclane}.

\begin{prop}
Let $\cD$ be a monoidal category, $\cM$ a $\cD$-module category and $A\in\cD$ an algebra object. Then the functor
\[\LMod_A(\cD)\otimes_\cD \cM\longrightarrow \LMod_A(\cM)\]
given by $M\boxtimes V\mapsto M\otimes V$ is an equivalence.
\label{prop:modtensorproduct}
\end{prop}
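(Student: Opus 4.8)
The plan is to identify both sides as the Eilenberg--Moore category of one and the same monad on $\cM$ and to recognize the displayed functor as the canonical comparison functor between the two resulting monadic adjunctions. I would first recall that the forgetful functor $\LMod_A(\cM)\to\cM$ is conservative and, because the $\cD$-action on $\cM$ preserves colimits in each variable, preserves all colimits and admits the free left adjoint $V\mapsto A\otimes V$. By the Barr--Beck theorem this exhibits $\LMod_A(\cM)$ as the category of modules over the monad $A\otimes-$ on $\cM$ associated with the algebra $A$.

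Next I would observe that the free/forgetful adjunction $\cD\rightleftarrows\LMod_A(\cD)$ is an adjunction of right $\cD$-module categories: the forgetful functor is manifestly right $\cD$-linear, and the free functor is too, since $(A\otimes X)\otimes Y\cong A\otimes(X\otimes Y)$. Applying the relative tensor product $-\otimes_\cD\cM$ (a $2$-functor on right $\cD$-module categories, hence one preserving adjunctions) therefore yields an adjunction between $\cD\otimes_\cD\cM\cong\cM$ and $\LMod_A(\cD)\otimes_\cD\cM$. By \cref{prop:balancedfunctors} the balanced functor $M\boxtimes V\mapsto M\otimes V$ induces the functor $\Phi\colon\LMod_A(\cD)\otimes_\cD\cM\to\LMod_A(\cM)$ in the statement, and by construction $\Phi$ is a functor over $\cM$ sending the free object $(\mathrm{free}\otimes_\cD\cM)(V)$ to the free $A$-module $A\otimes V$. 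Functoriality of the relative tensor product identifies the induced monad $(\mathrm{oblv}\otimes_\cD\cM)\circ(\mathrm{free}\otimes_\cD\cM)$ with $(\mathrm{oblv}\circ\mathrm{free})\otimes_\cD\cM=(A\otimes-)\otimes_\cD\cM$, i.e. with the monad $A\otimes-$ on $\cM$. Thus $\Phi$ intertwines the two monadic adjunctions via the identity map of monads, so it is an equivalence as soon as the left-hand adjunction is itself monadic.

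This last point is where the real work lies. Since we are in $\PrL$, all geometric realizations exist and are preserved by every morphism, so Barr--Beck monadicity of $(\mathrm{free}\otimes_\cD\cM)\dashv(\mathrm{oblv}\otimes_\cD\cM)$ reduces entirely to the conservativity of $\mathrm{oblv}\otimes_\cD\cM$, and I expect this to be the main obstacle: conservativity of $\mathrm{oblv}\colon\LMod_A(\cD)\to\cD$ is not automatically preserved by $-\otimes_\cD\cM$. To establish it I would use the bar resolution, which presents every object of $\LMod_A(\cD)$ as an $\mathrm{oblv}$-split geometric realization of free $A$-modules in a right $\cD$-linear manner; tensoring this resolution with $\cM$ then exhibits every object of $\LMod_A(\cD)\otimes_\cD\cM$ as a split realization of objects in the image of $\mathrm{free}\otimes_\cD\cM$, which allows one to detect isomorphisms after applying $\mathrm{oblv}\otimes_\cD\cM$. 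Alternatively, one may invoke the dual description of the relative tensor product as the totalization obtained by passing to right adjoints (\cref{lm:Totofcategories,rmk:totalizationtruncation}), where an object is a compatible family whose underlying object in $\cM$ already determines it, yielding conservativity directly.
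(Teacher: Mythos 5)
Your proposal is correct and takes essentially the same route as the paper, which gives no argument of its own beyond citing \cite[Corollary 4.13]{BZBJ1} as ``an application of the Barr--Beck theorem'': you identify $\LMod_A(\cM)$ as modules over the monad $A\otimes-$ on $\cM$, transport the free/forgetful adjunction through the $2$-functor $-\otimes_\cD\cM$, and correctly isolate conservativity of $\mathrm{oblv}\otimes_\cD\cM$ as the one nontrivial point, which your $\cD$-linear bar-resolution argument (applied to pure tensors $M\boxtimes V$ and extended to all objects by colimits, since these generate and all functors involved are cocontinuous) does settle. The only blemish is your final ``alternatively'': in the totalization description of \cref{lm:Totofcategories} the underlying object of a Cartesian section lives in $\LMod_A(\cD)\otimes\cM$, not in $\cM$, so that route yields conservativity only over $\LMod_A(\cD)\otimes\cM$ and does not directly give what you need --- but your primary argument stands without it.
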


\subsection{Monoidal module categories}
\label{sect:monoidalmodule}

A monoidal category $\cD$ is naturally a left and right module over itself, so it defines an object $\cD\in\LMod_{\cD\otimes\cD^{\mop}}$.

\begin{defn}
Let $\cD\in\PrL_k$ be a monoidal category. Its \defterm{Drinfeld center} is the category
\[\Z(\cD) = \Hom_{\LMod_{\cD\otimes \cD^{\mop}}}(\cD, \cD).\]
\end{defn}

Explicitly, $\Z(\cD)$ has objects given by pairs $(x, \beta)$ of an object $x\in\cD$ and a natural isomorphism $\beta\colon x\otimes(-)\xrightarrow{\sim} (-)\otimes x$. It is naturally a braided monoidal category.

For a braided monoidal category $\cC$ we denote by $\cC^{\bop}$ the same monoidal category with the inverse braiding. Then for a monoidal category $\cD$ we have a natural braided monoidal equivalence
\[\Z(\cD^{\mop})\cong \Z(\cD)^{\bop}.\]

\begin{defn}
Let $\cC$ be a braided monoidal category. A \defterm{$\cC$-monoidal category} is a monoidal category $\cD\in\PrL_k$ together with a braided monoidal functor $\cC\rightarrow \Z(\cD)$.
\end{defn}

\begin{remark}
The same notion was previously called a \emph{tensor category over $\cC$} in \cite[Definition 4.16]{DGNO} and a \emph{$\cC$-algebra} in \cite[Definition 3.2]{BJS}.
\end{remark}

Explicitly (see \cite[Definition 2.1]{Bez}) we have a monoidal action functor $T\colon \cC\rightarrow \cD$ and isomorphisms
\[\tau_{X, V}\colon T(X)\otimes V\xrightarrow{\sim} V\otimes T(X)\]
natural in $X\in\cC$ and $V\in\cD$ which make obvious diagrams commute. 

\begin{example}
If $\cC$ is a braided monoidal category, it can also be considered as a $\cC$-monoidal category.
\end{example}

\begin{example}
Suppose $A\in\cC$ is a commutative algebra. Then the category $\LMod_A(\cC)$ of (left) $A$-modules in $\cC$ becomes a $\cC$-monoidal category with the functor $\cC\rightarrow \LMod_A(\cC)$ given by $X\mapsto A\otimes X$ and the isomorphism $\tau$ given by the braiding.
\label{ex:modulescommutativealgebra}
\end{example}

\begin{example}
Suppose $A\in\cC$ is a bialgebra. Then the category $\CoMod_A(\cC)$ of $A$-comodules in $\cC$ becomes a $\cC$-monoidal category with the functor $\cC\rightarrow \CoMod_A(\cC)$ given by the trivial module and the isomorphism $\tau$ the braiding on $\cC$.
\label{ex:comodulesbialgebra}
\end{example}

\begin{remark}
Using the braided monoidal equivalence $\Z(\cD^{\mop})\cong \Z(\cD)^{\bop}$, a $\cC$-monoidal category gives rise to a $\cC^{\bop}$-monoidal category.
\end{remark}

Since $T\colon \cC\rightarrow \cD$ is monoidal and continuous, there is a right adjoint $T^R\colon \cD\rightarrow \cC$ which is moreover lax monoidal.

\begin{prop}
The right adjoint $T^R\colon \cD\rightarrow \cC$ is a lax $\cC$-monoidal functor, i.e. the diagram
\[
\xymatrix@C=2cm{
X\otimes T^R(V) \ar^{\sigma_{X, T^R(V)}}[r] \ar[d] & T^R(V)\otimes X \ar[d] \\
T^RT(X)\otimes T^R(V) \ar^{\sigma_{T^R T(X), T^R(V)}}[r] \ar[d] & T^R(V)\otimes T^R T(X) \ar[d] \\
T^R(T(X)\otimes V) \ar^{T^R(\tau_{X, V})}[r] & T^R(V\otimes T(X))
}
\]
commutes.
\label{prop:rightadjointlaxmonoidal}
\end{prop}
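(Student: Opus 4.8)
The plan is to verify the hexagon by passing to adjuncts under $T\dashv T^R$. Write $\eta\colon\id_\cC\Rightarrow T^RT$ and $\epsilon\colon TT^R\Rightarrow\id_\cD$ for the unit and counit, $\phi_{A,B}\colon T(A)\otimes T(B)\xrightarrow{\sim}T(A\otimes B)$ for the strong monoidal structure of $T$, $\sigma$ for the braiding of $\cC$, and recall that the lax structure $\ell_{V,W}\colon T^R(V)\otimes T^R(W)\to T^R(V\otimes W)$ is the mate of $(\epsilon_V\otimes\epsilon_W)\circ\phi^{-1}_{T^R(V),T^R(W)}$, so that $\epsilon_{V\otimes W}\circ T(\ell_{V,W})=(\epsilon_V\otimes\epsilon_W)\circ\phi^{-1}_{T^R(V),T^R(W)}$. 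Since the adjunction induces a bijection $\Hom_\cC(X\otimes T^R(V),T^R(V\otimes T(X)))\cong\Hom_\cD(T(X\otimes T^R(V)),V\otimes T(X))$, the two composites in the diagram coincide as soon as their mates do, and it is these mates that I would compute.

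First I would transport the left--bottom composite $T^R(\tau_{X,V})\circ\ell_{T(X),V}\circ(\eta_X\otimes\id)$. Applying $T$, composing with $\epsilon_{V\otimes T(X)}$, and then using naturality of $\epsilon$ to move $\tau_{X,V}$ out of $TT^R$, the defining identity for $\ell$, naturality of $\phi^{-1}$, and finally the triangle identity $\epsilon_{T(X)}\circ T(\eta_X)=\id$, the mate collapses to
\[\tau_{X,V}\circ(\id_{T(X)}\otimes\epsilon_V)\circ\phi^{-1}_{X,T^R(V)}\colon T(X\otimes T^R(V))\to V\otimes T(X).\]
Running the identical formal steps on the right--top composite $\ell_{V,T(X)}\circ(\id\otimes\eta_X)\circ\sigma_{X,T^R(V)}$ produces the mate
\[(\epsilon_V\otimes\id_{T(X)})\circ\phi^{-1}_{T^R(V),X}\circ T(\sigma_{X,T^R(V)})\colon T(X\otimes T^R(V))\to V\otimes T(X).\]

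The two mates are then identified using the only two genuine inputs. Because the functor $\cC\to\Z(\cD)$ underlying the $\cC$-monoidal structure is \emph{braided}, $T$ carries the braiding of $\cC$ to the braiding of the Drinfeld center, which on the image of $T$ is given by the half-braiding $\tau$; concretely $\phi^{-1}_{T^R(V),X}\circ T(\sigma_{X,T^R(V)})=\tau_{X,TT^R(V)}\circ\phi^{-1}_{X,T^R(V)}$. Substituting this into the second mate reduces the required equality to $(\epsilon_V\otimes\id)\circ\tau_{X,TT^R(V)}=\tau_{X,V}\circ(\id\otimes\epsilon_V)$, which is precisely the naturality of the half-braiding $\tau_{X,-}$ applied to the morphism $\epsilon_V\colon TT^R(V)\to V$. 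Hence the two mates agree and the hexagon commutes.

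The mathematical content is thus concentrated in the braided compatibility $\phi^{-1}_{T(Y),X}\circ T(\sigma_{X,Y})=\tau_{X,T(Y)}\circ\phi^{-1}_{X,Y}$, which is nothing but the hypothesis that $\cC\to\Z(\cD)$ is braided; everything else is formal adjunction calculus. I expect the only delicate point to be the coherence bookkeeping --- keeping track of the nonstrict structure isomorphisms $\phi$ and the associators of $\cD$ when splitting $T(X\otimes T^R(V))$ and when invoking the mate description of $\ell$. This can be suppressed by coherence for (braided) monoidal functors, or, more conceptually, the entire argument can be packaged as a pasting of mates in the relevant $2$-category, where the equality becomes the statement that the half-braiding is a $2$-cell compatible with the adjunction $T\dashv T^R$.
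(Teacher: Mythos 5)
Your proof is correct and is exactly the argument the paper has in mind: the paper omits its own proof, remarking that it is ``identical to the proof that the right adjoint of a braided monoidal functor preserves the braiding,'' and your mate computation under $T\dashv T^R$ carries out precisely this doctrinal-adjunction argument, with the two genuine inputs correctly isolated (braidedness of $\cC\rightarrow\Z(\cD)$, and naturality of $\tau_{X,-}$ in the $\cD$-variable --- the latter made available by first inserting the unit $\eta_X$, which matters because $\tau$ is not natural along arbitrary $\cD$-morphisms such as $\epsilon_{T(X)}$ in its first slot, so it is the outer hexagon, the statement actually used later in \cref{prop:REAcommutative}, that one proves). The only blemish is an index slip in your closing paragraph, where $\phi^{-1}_{T(Y),X}$ should read $\phi^{-1}_{Y,X}$, consistent with the displayed identity you actually use.
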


The proof of the above theorem is identical to the proof that the right adjoint of a braided monoidal functor preserves the braiding, so we omit it.

\begin{lm}
Let $\cC$ be a braided monoidal category and $\cD$ a $\cC$-monoidal category. Then:
\begin{enumerate}
\item The tensor product functor $\cC\otimes \cC\rightarrow \cC$ carries a natural monoidal structure.

\item The action functors $\cC\otimes \cD\rightarrow \cD$ given by $X,V\mapsto T(X)\otimes V$ and $\cD^{\mop}\otimes \cC\rightarrow \cD^{\mop}$ given by $V, X\mapsto V\otimes T(X)$ carry a natural monoidal structure.
\end{enumerate}
\end{lm}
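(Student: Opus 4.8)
The plan is to write down the monoidal structure isomorphisms explicitly and then reduce the two coherence axioms of a monoidal functor to the hexagon identities for the braiding and to the defining axioms of the central structure. Conceptually, all three assertions are shadows of the fact that in a symmetric monoidal $2$-category the multiplication of an $\bE_2$-algebra --- and the action of an $\bE_2$-algebra on a compatible module --- is a morphism of $\bE_1$-algebras; I will instead give the hands-on argument. Throughout I suppress associativity and unit constraints, which is harmless by Mac Lane coherence.

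For (1), let $m\colon\cC\otimes\cC\to\cC$ denote the functor induced by the tensor product, with source equipped with the pointwise monoidal structure $(X_1\boxtimes Y_1)\otimes(X_2\boxtimes Y_2) = (X_1\otimes X_2)\boxtimes(Y_1\otimes Y_2)$. I would take the structure isomorphism to be
\[
m(X_1\boxtimes Y_1)\otimes m(X_2\boxtimes Y_2) = (X_1\otimes Y_1)\otimes(X_2\otimes Y_2)\xrightarrow{\ \id\otimes\sigma_{Y_1,X_2}\otimes\id\ }(X_1\otimes X_2)\otimes(Y_1\otimes Y_2),
\]
using the braiding $\sigma_{Y_1,X_2}$ to interchange the two middle factors, with the unit constraint the canonical $1\otimes 1\cong 1$. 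This is the classical statement that the tensor product of a braided monoidal category is monoidal.

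For (2), the left action $\act\colon\cC\otimes\cD\to\cD$ sends $X\boxtimes V$ to $T(X)\otimes V$. For $a = X_1\boxtimes V_1$ and $b = X_2\boxtimes V_2$ I would combine the central structure $\tau$ with the (strong) monoidal structure of $T$:
\[
\act(a)\otimes\act(b) = T(X_1)\otimes V_1\otimes T(X_2)\otimes V_2\xrightarrow{\ \id\otimes\tau_{X_2,V_1}^{-1}\otimes\id\ }T(X_1)\otimes T(X_2)\otimes V_1\otimes V_2\cong T(X_1\otimes X_2)\otimes(V_1\otimes V_2) = \act(a\otimes b).
\]
The functor $\cD^{\mop}\otimes\cC\to\cD^{\mop}$ is handled by the identical method applied to the right action $V\boxtimes X\mapsto V\otimes T(X)$; here the reordering of factors forced by the opposite tensor product on $\cD^{\mop}$ uses both $\tau$ and the image under $T$ of the braiding of $\cC$. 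Equivalently, one may transport the first case along the braided equivalence $\Z(\cD^{\mop})\cong\Z(\cD)^{\bop}$ noted earlier.

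It remains to verify, in each case, compatibility with associativity and units. For (1) the associativity hexagon for $m$ expands into the two ways of rearranging $X_1\otimes Y_1\otimes X_2\otimes Y_2\otimes X_3\otimes Y_3$ into $X_1\otimes X_2\otimes X_3\otimes Y_1\otimes Y_2\otimes Y_3$, and their equality is exactly the content of the two hexagon axioms for $\sigma$; the unit axioms are immediate. For (2) the corresponding diagram reduces to the axioms expressing that $X\mapsto(T(X),\tau_{X,-})$ is a braided monoidal functor $\cC\to\Z(\cD)$ --- namely that each $\tau_{X,-}$ is a half-braiding compatible with the tensor product of $\cD$, and that $\tau$ is compatible with the tensor product of $\cC$ via the constraints of $T$. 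I expect the only real difficulty to be bookkeeping: reinstating the suppressed associators and unitors of $\cC$ and $\cD$ and matching the resulting pasting diagram to the hexagon and center axioms. Since none of the ambient structures are strict, it is cleanest either to carry the explicit pseudofunctor data or to invoke strictification to keep these diagrams tractable.
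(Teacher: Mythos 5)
Your proposal is correct and follows essentially the same route as the paper: the structure isomorphism in (1) is the middle-factor braiding $\sigma_{X_2,X_3}$ and in (2) it is $\tau_{X_2,V_1}^{-1}$ composed with the monoidal structure on $T$, exactly as in the paper's proof. Your additional coherence verifications (hexagons for $\sigma$, central-structure axioms for $\tau$) and the treatment of $\cD^{\mop}\otimes\cC\rightarrow\cD^{\mop}$ via $\Z(\cD^{\mop})\cong\Z(\cD)^{\bop}$ simply spell out details the paper leaves implicit.
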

\begin{proof} $ $
\begin{enumerate}
\item The monoidal structure on $\cC\otimes \cC\rightarrow \cC$ is the natural isomorphism
\[(X_1\otimes X_2)\otimes (X_3\otimes X_4)\cong (X_1\otimes X_3)\otimes (X_2\otimes X_4)\]
given by the braiding $\sigma_{X_2, X_3}$.

\item Let $T\colon \cC\rightarrow \cD$ be the monoidal action functor. The monoidal structure on $\cC\otimes \cD\rightarrow \cD$ is the natural isomorphism
\[(T(X_1)\otimes V_1)\otimes (T(X_2)\otimes V_2)\cong T(X_1\otimes X_2)\otimes (V_1\otimes V_2)\]
given by $\tau_{X_2, V_1}^{-1}$ and the monoidal structure on $T$.
\end{enumerate}
\end{proof}

Given a $\cC$-monoidal category $\cD$ and a $\cC$-monoidal category $\cE$, we can therefore upgrade the simplicial object \eqref{eq:barconstruction} in $\PrL$ to a simplicial object
\[\simp{\cE^{\mop}\otimes \cD}{\cE^{\mop}\otimes\cC\otimes\cD}\]
in $\Alg(\PrL)$. By \cite[Proposition 3.2.3.1]{HA} the forgetful functor $\Alg(\PrL)\rightarrow \PrL$ creates geometric realizations of simplicial objects, so the relative tensor product $\cE\otimes_\cC\cD$ carries a natural structure of a monoidal category. We are now going to explain a universal property of the monoidal structure on the relative tensor product $\cE\otimes_\cC\cD$.

\begin{defn}
Let $\cD,\cE,\cA$ be monoidal categories and $F_\cE\colon \cE\rightarrow \cA$ and $F_\cD\colon \cD\rightarrow \cA$ be monoidal functors. Then a \defterm{distributive law} between $F_\cE$ and $F_\cD$ is given by an isomorphism
\[\beta_{W, V}\colon F_\cE(W)\otimes F_\cD(V)\xrightarrow{\sim} F_\cD(V)\otimes F_\cE(W)\]
compatible with the monoidal structures on $F_\cE$ and $F_\cD$.
\end{defn}

Let $\Fun^{\otimes}(-, -)$ denote the category of colimit-preserving monoidal functors of monoidal categories. The following is then well-known.

\begin{lm}
Suppose $\cE, \cD, \cA$ are monoidal categories. Then $\Fun^{\otimes}(\cE\otimes \cD, \cA)$ is equivalent to the category of triples $(F_\cE, F_\cD, \beta)$ of monoidal functors $F_\cE\colon \cE\rightarrow \cA$ and $F_\cD\colon \cD\rightarrow \cA$ and a distributive law between them.
\label{lm:distributivelaw}
\end{lm}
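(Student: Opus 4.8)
The statement \cref{lm:distributivelaw} is an analog of \cref{prop:balancedfunctors} for monoidal functors out of a (non-relative) tensor product $\cE\otimes\cD$, and I would prove it the same way: exhibit both sides as sections of (or functors out of) an appropriate categorical limit/colimit and match the data term by term. The plan is to construct the equivalence
\[\Fun^\otimes(\cE\otimes\cD, \cA)\xrightarrow{\sim}\{(F_\cE, F_\cD, \beta)\}\]
explicitly in both directions and check that the two assignments are mutually inverse, the main work being the identification of a monoidal functor out of $\cE\otimes\cD$ with its restrictions to the two factors together with a commutation datum.

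\emph{From left to right.} First I would restrict a colimit-preserving monoidal functor $G\colon\cE\otimes\cD\to\cA$ along the two canonical monoidal inclusions $\cE\cong\cE\otimes\Mod_k\hookrightarrow\cE\otimes\cD$ and $\cD\cong\Mod_k\otimes\cD\hookrightarrow\cE\otimes\cD$ (using that $\Mod_k$ is the unit of $\PrL$), obtaining monoidal functors $F_\cE\coloneqq G|_\cE$ and $F_\cD\coloneqq G|_\cD$. The commutation isomorphism $\beta$ is then forced: in $\cE\otimes\cD$ there is a canonical isomorphism
\[(W\boxtimes 1)\otimes(1\boxtimes V)\cong W\boxtimes V\cong (1\boxtimes V)\otimes(W\boxtimes 1)\]
coming from the unit constraints of the two tensor structures (both sides compute $W\boxtimes V$ since the monoidal unit of $\cE\otimes\cD$ is $1\boxtimes 1$), and applying the monoidal functor $G$ to this isomorphism yields $\beta_{W,V}\colon F_\cE(W)\otimes F_\cD(V)\xrightarrow{\sim}F_\cD(V)\otimes F_\cE(W)$. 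That $\beta$ is compatible with the monoidal structures of $F_\cE$ and $F_\cD$ then follows by applying $G$ to the corresponding coherences in $\cE\otimes\cD$, since $G$ is monoidal.

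\emph{From right to left.} Conversely, given $(F_\cE, F_\cD, \beta)$ I would define $G(W\boxtimes V)=F_\cE(W)\otimes F_\cD(V)$ on objects of the form $W\boxtimes V$ and extend by colimits, using that $\cE\otimes\cD$ is generated under colimits by such pure tensors and that both $F_\cE, F_\cD$ and $\otimes_\cA$ preserve colimits in each variable, so that $G$ is the unique colimit-preserving such extension. The monoidal structure on $G$ is assembled from the monoidal structures of $F_\cE$, $F_\cD$, and one insertion of $\beta$: the structure isomorphism
\[G(W_1\boxtimes V_1)\otimes G(W_2\boxtimes V_2)=F_\cE(W_1)\otimes F_\cD(V_1)\otimes F_\cE(W_2)\otimes F_\cD(V_2)\]
is obtained by applying $\beta_{W_2, V_1}$ in the middle and then the monoidal constraints of $F_\cE$ and $F_\cD$ to reach $F_\cE(W_1\otimes W_2)\otimes F_\cD(V_1\otimes V_2)=G((W_1\otimes W_2)\boxtimes(V_1\otimes V_2))$.

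\emph{Where the work is.} The main obstacle is purely bookkeeping: verifying the hexagon/pentagon-type coherences for the monoidal structure on $G$ just constructed, which amounts to checking that $\beta$, inserted as above, interacts correctly with the associativity and unit constraints — this is exactly the content of ``$\beta$ is compatible with the monoidal structures,'' and it is the same calculation that appears when one shows the monoidal structure on $\cC\otimes\cC\to\cC$ via the braiding is coherent. Having done this, the two constructions are visibly mutually inverse on objects, and I would extend this to natural transformations to conclude the equivalence of categories. As the lemma is labeled ``well-known,'' I expect this routine verification can be stated briefly rather than carried out in full.
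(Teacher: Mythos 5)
The paper offers no proof of \cref{lm:distributivelaw} at all --- it is introduced with ``The following is then well-known'' and used as an input to \cref{prop:monoidaltensorproduct} --- so there is nothing to match your argument against; what you have written is the standard direct proof, and it is correct. Both directions of your construction are sound: restricting along $\cE\cong\cE\otimes\Mod_k$ and $\cD\cong\Mod_k\otimes\cD$ and extracting $\beta$ from the interchange isomorphism $(W\boxtimes 1)\otimes(1\boxtimes V)\cong W\boxtimes V\cong(1\boxtimes V)\otimes(W\boxtimes 1)$ is exactly right, and conversely the formula $G(W\boxtimes V)=F_\cE(W)\otimes F_\cD(V)$ with one insertion of $\beta$ in the structure maps recovers the functor. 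Two fine points deserve to be made explicit rather than left implicit. First, your extraction of $\beta$ as an isomorphism uses that the functors in $\Fun^\otimes$ are \emph{strong} monoidal (their structure maps are invertible); for lax monoidal functors one would only get a map in one direction, so it is worth recording that this is the paper's convention. Second, in the right-to-left direction the monoidal constraint of $G$ is a natural transformation between the two composites $\otimes_\cA\circ(G\otimes G)$ and $G\circ\otimes_{\cE\otimes\cD}$ out of $(\cE\otimes\cD)\otimes(\cE\otimes\cD)$; the justification that defining it on pure tensors suffices, and that the pentagon and unit coherences need only be checked there, is that the universal property of $\otimes$ in $\PrL$ gives an \emph{equivalence of categories} between $\FunL(\cE\otimes\cD,\cA)$ and bifunctors preserving colimits in each variable (as in \cref{prop:balancedfunctors}), hence identifies natural transformations and not merely objects. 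You gesture at this with ``extend by colimits,'' but it is the one place where the argument genuinely leans on the $\PrL$ formalism rather than on bookkeeping, so it should be said. With those two remarks added, your proof is complete.
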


One also has a similar description of the functor category for several tensor factors. Let $T_\cE\colon \cC\rightarrow \cE^{\mop}$ and $T_\cD\colon\cC\rightarrow \cD$ the monoidal action functors.

\begin{prop}
Suppose $\cC$ is a braided monoidal category, $\cD$ and $\cE$ are $\cC$-monoidal categories and $\cA$ is another monoidal category. Then $\Fun^{\otimes}(\cE\otimes_\cC \cD, \cA)$ is equivalent to the following category:
\begin{itemize}
\item Its objects are quadruples $(F_\cE, F_\cD, \beta, \alpha)$, where $F_\cE\colon \cE^{\mop}\rightarrow \cA$ and $F_\cD\colon \cD\rightarrow \cA$ are monoidal functors, $\beta_{W, V}\colon F_\cE(W)\otimes F_\cD(V)\xrightarrow{\sim} F_\cD(V)\otimes F_\cE(W)$ is a distributive law and $\alpha\colon F_\cD\circ T_\cD\xrightarrow{\sim} F_\cE\circ T_\cE$ is a monoidal natural isomorphism such that the diagram
\[
\xymatrix{
F_\cD(T_\cD(X)\otimes V) \ar^{F_\cD(\tau_{X, V})}[r] \ar^{\sim}[d] & F_\cD(V\otimes T_\cD(X)) \ar^{\sim}[d] \\
F_\cD(T_\cD(X))\otimes F_\cD(V) \ar^{\alpha_X\otimes \id}[d] & F_\cD(V)\otimes F_\cD(T_\cD(X)) \ar^{\id\otimes\alpha_X}[d] \\
F_\cE(T_\cE(X))\otimes F_\cD(V) \ar^{\beta_{T_\cE(X), V}}[r] & F_\cD(V)\otimes F_\cE(T_\cE(X))
}
\]
and its analog for $\cE$ commute.

\item Its morphisms $(F_\cE, F_\cD, \beta, \alpha)\rightarrow (F_\cE', F_\cD', \beta', \alpha')$ are monoidal natural transformations $F_\cE\rightarrow F_\cE'$ and $F_\cD\rightarrow F_\cD'$ compatible with the isomorphisms $\alpha$ and $\beta$.
\end{itemize}
\label{prop:monoidaltensorproduct}
\end{prop}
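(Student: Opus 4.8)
The plan is to apply the functor $\Fun^{\otimes}(-, \cA)$ to the bar presentation of $\cE\otimes_\cC\cD$ and then unwind the resulting limit by means of \cref{lm:Totofcategories,lm:distributivelaw}. By construction $\cE\otimes_\cC\cD$ is the geometric realization in $\Alg(\PrL)$ of the simplicial object $\simp{\cE^{\mop}\otimes\cD}{\cE^{\mop}\otimes\cC\otimes\cD}$. Since $\Fun^{\otimes}(-, \cA)\simeq\Hom_{\Alg(\PrL)}(-, \cA)$ sends colimits to limits, it carries this colimit to the limit
\[\lim\left(\cosimp{\Fun^{\otimes}(\cE^{\mop}\otimes\cD, \cA)}{\Fun^{\otimes}(\cE^{\mop}\otimes\cC\otimes\cD, \cA)}\right)\]
of the associated cosimplicial category, which by \cref{rmk:totalizationtruncation} depends only on its first three levels and is therefore described explicitly by \cref{lm:Totofcategories}.

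First I would identify the levels via \cref{lm:distributivelaw}. The zeroth level is the category of triples $(F_\cE, F_\cD, \beta)$, matching the first three pieces of data in the statement. Applying the lemma to the threefold tensor product identifies the first level with tuples consisting of monoidal functors out of $\cE^{\mop}$, $\cC$ and $\cD$ together with three pairwise distributive laws. The two coface maps $d^0, d^1$ are dual to the two face maps of the bar construction — the left $\cC$-action on $\cD$ and the right $\cC$-action on $\cE^{\mop}$ — and precomposing $(F_\cE, F_\cD, \beta)$ with them leaves $F_\cE$, $F_\cD$ and $\beta$ unchanged while producing the two middle functors $F_\cD\circ T_\cD$ and $F_\cE\circ T_\cE$ from $\cC$ to $\cA$. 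By \cref{lm:Totofcategories} an object of the limit is therefore a triple $(F_\cE, F_\cD, \beta)$ together with an isomorphism $\alpha\colon F_\cD\circ T_\cD\xrightarrow{\sim} F_\cE\circ T_\cE$ in the first-level category, subject to the two coherence conditions of that lemma.

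It then remains to unwind these constraints. That $\alpha$ is a morphism in the first-level category means, by the morphism part of \cref{lm:distributivelaw}, that it is compatible with the two distributive laws attached to the middle slot: for the functor $F_\cD\circ T_\cD$ this law is $F_\cD(\tau_{X, V})$, since $T_\cD(X)$ lies in the image of $\cC\rightarrow\Z(\cD)$ and hence commutes with every $V\in\cD$ through $\tau$, whereas for $F_\cE\circ T_\cE$ it is $\beta_{T_\cE(X), V}$; the resulting compatibility is precisely the displayed diagram, and the symmetric compatibility on the $\cE$-side is its analog. The remaining two relations of \cref{lm:Totofcategories} — the hexagonal cocycle relation and the degeneracy relation — together assert that $\alpha$ respects multiplication and the unit, i.e. that it is a monoidal natural isomorphism, and the description of morphisms matches the stated morphisms of quadruples verbatim. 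The main obstacle is the bookkeeping in the previous sentences: one must track exactly how the half-braiding $\tau$ enters the distributive law attached to $F_\cD\circ T_\cD$, so as to confirm that these identities reproduce the displayed diagram rather than a variant with $\tau$ inverted or with the tensor factors transposed. Everything else is a direct translation through \cref{lm:distributivelaw}.
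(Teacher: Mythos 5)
Your proposal is correct and follows essentially the same route as the paper: the paper's proof simply says the argument is analogous to that of \cref{prop:balancedfunctors} (apply $\Fun^{\otimes}(-,\cA)$ to the bar presentation, describe the resulting cosimplicial limit via \cref{lm:Totofcategories}), with \cref{lm:distributivelaw} used to identify the levels $\Fun^{\otimes}(\cE^{\mop}\otimes\cD,\cA)$ and $\Fun^{\otimes}(\cE^{\mop}\otimes\cC\otimes\cD,\cA)$ -- exactly your decomposition. Your unwinding of the coherence data (the degeneracy relation normalizing the $\cE$- and $\cD$-components, the hexagon from the monoidal structure on the action functor involving $\tau^{-1}$ on one face and $\beta$ on the other) is in fact more detailed than what the paper records.
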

\begin{proof}
The proof is analogous to the proof of \cref{prop:balancedfunctors} where we use \cref{lm:distributivelaw} to describe the categories $\Fun^{\otimes}(\cE^{\mop}\otimes \cD, \cA)$ and $\Fun^{\otimes}(\cE^{\mop}\otimes \cC\otimes \cD, \cA)$.
\end{proof}

\section{Manin pairs}

\label{sect:Maninpairs}

Our goal in this section is to provide a definition and examples of quantization of Manin pairs and Manin triples.

\subsection{Quantum Manin pairs}

\begin{defn}
A \defterm{quantum Manin pair} $(\cC, \cD)$ is a pair of a braided monoidal category $\cC$ and a $\cC$-monoidal category $\cD$.
\label{def:quantumManinpair}
\end{defn}

\begin{example}
A somewhat degenerate example of a quantum Manin pair is the pair $(\Mod_k, \cD)$ for a monoidal category $\cD$. Namely, $k$-linear colimit-preserving functors $\Mod_k\rightarrow \Z(\cD)$ by the Eilenberg--Watts theorem \cite{NymanSmith} are uniquely specified by their value on $k\in\Mod_k$ and we send $k\in\Mod_k$ to the unit object of $\Z(\cD)$.
\end{example}

\begin{example}
If $\cD$ is a braided monoidal category, the pair $(\cD\otimes \cD^{\bop}, \cD)$ is a quantum Manin pair via the natural braided monoidal functor $\cD\otimes \cD^{\bop}\rightarrow \Z(\cD)$ (see \cite[Proposition 8.6.1]{EGNO}) corresponding to the left and right action of $\cD$ on itself.
\label{ex:factorizableManinpair}
\end{example}

\begin{remark}
Suppose $(\fd, \g)$ is a Manin pair which integrates to a group pair $(D, G)$ \cite{AKS}. As shown in \cite[Proposition 4.16]{PoissonLie}, in this case the map of classifying stacks $\B G\rightarrow \B D$ carries a 2-shifted Lagrangian structure and the quantum Manin pair $(\cC, \cD)$ may be thought of as a quantization of this Lagrangian.
\label{rmk:classicalmaninpairs}
\end{remark}

We are now going to define several important objects associated to a quantum Manin pair $(\cC, \cD)$. By the results of \cref{sect:monoidalmodule} the category
\[\HC = \cD\otimes_{\cC} \cD\]
carries a natural structure of a monoidal category such that the projection
\[\cD^{\mop}\otimes \cD\longrightarrow \HC\]
is monoidal.

\begin{example}
Suppose $\cD$ is a braided monoidal category and consider the quantum Manin pair $(\cD\otimes \cD^{\bop}, \cD)$. Then the monoidal category
\[\HC = \cD\otimes_{\cD\otimes \cD^{\bop}} \cD\]
is a twisted version of the cocenter (or zeroth Hochschild homology) category of $\cD$, see \cite[Lemma 3.9]{BZBJ2}. If $\cD$ is balanced, the two coincide.
\end{example}

\begin{remark}
In the setting of \cref{rmk:classicalmaninpairs} the monoidal category $\HC$ may be thought of as a quantization of the 1-shifted symplectic stack $\B G\times_{\B D} \B G\cong [G\backslash D/G]$.
\end{remark}

We have the following description of $\HC$-monoidal categories.

\begin{prop}
An $\HC$-module category is a $\cD$-bimodule category $\cM$ together with an identification of the two induced $\cC$-actions such that the diagram
\[
\xymatrix@C=2cm{
(T(X)\otimes V)\otimes M \ar^{\tau_{X, V}\otimes \id}[r] \ar^{\sim}[d] & (V\otimes T(X))\otimes M \ar^{\sim}[d] \\
T(X)\otimes(V\otimes M) \ar^{\sim}[d] & V\otimes (T(X)\otimes M) \ar^{\sim}[d] \\
(V\otimes M)\otimes T(X) \ar^{\sim}[r] & V\otimes (M\otimes T(X))
}
\]
for $X\in\cC, V\in\cD$ and $M\in\cM$ commutes and similarly for the right action.
\label{prop:HCmodule}
\end{prop}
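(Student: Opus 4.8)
The statement I need to prove is \Cref{prop:HCmodule}: an $\HC$-module category is the same data as a $\cD$-bimodule category $\cM$ equipped with an identification of the two induced $\cC$-actions satisfying a compatibility hexagon. The natural strategy is to unwind the definition $\HC = \cD\otimes_\cC \cD$ as a relative tensor product and apply the universal property machinery already developed in the excerpt. Since $\HC$ is a monoidal category, an $\HC$-module category $\cM$ is by definition an object of $\LMod_\HC(\PrL)$, which amounts to a monoidal functor $\HC \rightarrow \FunL(\cM,\cM)$ (the action encoded as a functor into the endomorphism category). So the plan is to understand monoidal functors out of $\HC$ and then specialize the target.

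**Main steps.** First I would observe that giving an $\HC$-module structure on $\cM$ is equivalent to giving a colimit-preserving monoidal functor $\HC \rightarrow \FunL(\cM,\cM)$, where the target carries its composition-monoidal structure. Then I would apply \cref{prop:monoidaltensorproduct} with $\cE = \cD$, $\cA = \FunL(\cM,\cM)$: such a monoidal functor out of $\HC = \cD\otimes_\cC\cD$ corresponds to a quadruple $(F_\cE, F_\cD, \beta, \alpha)$. Here $F_\cD\colon \cD \rightarrow \FunL(\cM,\cM)$ is a monoidal functor, i.e. a left $\cD$-module structure on $\cM$; the functor $F_\cE\colon \cD^{\mop} \rightarrow \FunL(\cM,\cM)$ is equivalently a right $\cD$-module structure on $\cM$; the distributive law $\beta$ expresses that the left and right $\cD$-actions commute, i.e. together they give a $\cD$-bimodule structure; and the natural isomorphism $\alpha\colon F_\cD\circ T_\cD \cong F_\cE\circ T_\cE$ identifies the two $\cC$-actions on $\cM$ induced by restricting along $T_\cC\colon \cC\rightarrow \cD$. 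The coherence diagram in \cref{prop:monoidaltensorproduct} then translates, under this dictionary, precisely into the displayed hexagon in the statement (and its analog for the right action), because $\tau_{X,V}$ is the braiding isomorphism governing how $T(X)$ commutes past $V$ inside $\cD$.

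**The translation step.** The crux is matching the abstract coherence diagram of \cref{prop:monoidaltensorproduct} with the concrete hexagon in the statement. I would trace through the identification carefully: the vertical isomorphisms $F_\cD(T_\cD(X)\otimes V)\cong F_\cD(T_\cD(X))\otimes F_\cD(V)$ become the associativity isomorphisms relating $(T(X)\otimes V)\otimes M$ to $T(X)\otimes(V\otimes M)$ once we evaluate endofunctors on an object $M\in\cM$; the horizontal map $F_\cD(\tau_{X,V})$ becomes $\tau_{X,V}\otimes \id_M$; and the bottom distributive-law map $\beta_{T_\cE(X),V}$ encodes moving the $\cC$-object past $V$ through the right action, yielding the passage $V\otimes(T(X)\otimes M)\rightarrow (V\otimes M)\otimes T(X)$. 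The monoidality of $\alpha$ is exactly the requirement that the two $\cC$-actions agree as monoidal (braided) actions rather than merely as functors.

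**Anticipated obstacle.** The main obstacle is bookkeeping rather than conceptual: reconciling the opposite-category conventions. Because $F_\cE$ lands on $\cD^{\mop}$ and $T_\cE\colon \cC\rightarrow \cD^{\mop}$, while the right $\cD$-action on $\cM$ and the induced $\cC$-action use the braiding $\tau$ together with the equivalence $\Z(\cD^{\mop})\cong \Z(\cD)^{\bop}$ noted earlier in the excerpt, one must verify that the braiding appearing in $\beta$ composed with the monoidal structure on the right action reproduces exactly the isomorphism in the lower half of the hexagon, with no inverse-braiding discrepancy. I expect this to reduce to a direct but careful check, so in the write-up I would state that the correspondence follows from \cref{prop:monoidaltensorproduct} by unwinding definitions and matching the coherence data, flagging the right-action analog as identical after applying $\Z(\cD^{\mop})\cong \Z(\cD)^{\bop}$, rather than grinding through every diagram.
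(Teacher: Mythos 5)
Your proposal is correct and follows exactly the paper's own argument: the paper's proof likewise identifies an $\HC$-module structure with a monoidal functor $\cD\otimes_\cC\cD\rightarrow \FunL(\cM,\cM)$ and unpacks it via \cref{prop:monoidaltensorproduct}. Your write-up merely spells out the translation step (the dictionary between $(F_\cE, F_\cD, \beta, \alpha)$ and the bimodule data with the hexagon) that the paper leaves implicit.
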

\begin{proof}
An $\HC$-module category is a category $\cM$ together with a monoidal functor
\[\cD\otimes_\cC\cD\rightarrow \FunL(\cM, \cM).\]
The latter can be unpacked using \cref{prop:monoidaltensorproduct}.
\end{proof}

\begin{example}
$\cD$ is a $\cD$-bimodule category with respect to the left and right actions on itself. The identification of the induced $\cC$-actions is given by the isomorphism $\tau$, so $\cD$ is an $\HC$-module category.
\label{ex:trivialHCmodule}
\end{example}

\begin{remark}
Consider the quantum Manin pair $(\cD\otimes \cD^{\bop}, \cD)$ from \cref{ex:factorizableManinpair} and an $\HC$-module category $\cM$. Then we get two identifications of the left and right $\cD$-actions. Therefore, in this case an $\HC$-module category is a $\cD$-module category $\cM$ together with an automorphism of the action functor $\cD\otimes\cM\rightarrow \cM$ satisfying certain compatibilities. In other words, $\cM$ is a $\cD$-braided module category, see \cite[Definition 3.4]{BZBJ2} and \cite[Theorem 3.11]{BZBJ2}.
\end{remark}

We may introduce a relative version of the Drinfeld center, see also \cite[Definition 3.28]{Laugwitz} for a related notion.

\begin{defn}
Let $\cD$ be a $\cC$-monoidal category. The \defterm{relative Drinfeld center} is
\[\Z_{\cC}(\cD) = \FunL_{\HC}(\cD, \cD),\]
the category of colimit-preserving functors $\cD\rightarrow \cD$ as $\HC$-module categories.
\end{defn}

Explicitly (see \cite[Proposition 3.34]{Laugwitz}), $\Z_{\cC}(\cD)$ is a full subcategory of the Drinfeld center $\Z(\cD)$ consisting of pairs $(V, \beta)$ such that $\beta_{T(x)}\colon V\otimes T(x)\xrightarrow{\sim} T(x)\otimes V$ is the inverse of $\tau_{x, V}$.

Let $T\colon \cC\rightarrow \cD$ be the monoidal action functor. Its right adjoint $T^R\colon \cD\rightarrow \cC$ is lax monoidal and so $T^R(1)\in\cC$ is an algebra object.

\begin{prop}
The algebra $T^R(1)\in\cC$ is commutative.
\label{prop:REAcommutative}
\end{prop}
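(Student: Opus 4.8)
I want to show that the algebra $T^R(1)\in\cC$ is commutative, where $T\colon\cC\to\cD$ is the monoidal action functor of a $\cC$-monoidal category and $T^R$ is its lax-monoidal right adjoint. The plan is to exploit the braiding $\sigma$ on $\cC$ and compare it with the half-braiding $\tau$ relating the image of $\cC$ to $\cD$. The key structural input is \cref{prop:rightadjointlaxmonoidal}, which says precisely that $T^R$ is lax $\cC$-monoidal: the large square there relates the braiding $\sigma$ on $\cC$ to the isomorphism $\tau$ on $\cD$ through the lax structure maps of $T^R$.

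Let me recall the steps. First I would write the multiplication on $A := T^R(1)$ as the composite $A\otimes A \to T^R(1\otimes 1)\cong T^R(1) = A$ coming from the lax monoidal structure of $T^R$ (the lax structure map $T^R(1)\otimes T^R(1)\to T^R(1\otimes 1)$ followed by the unitor). Commutativity of $A$ means that precomposing this multiplication with the braiding $\sigma_{A,A}\colon A\otimes A\to A\otimes A$ on $\cC$ yields the same map. I would therefore form the diagram expressing $m\circ\sigma_{A,A}$ and try to identify it with $m$.

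**The key step.** The heart of the argument is the commuting diagram of \cref{prop:rightadjointlaxmonoidal}, specialized to $V = 1\in\cD$, which compares $\sigma$ on $\cC$ with $\tau$ on $\cD$ via the counit $TT^R(1)\to 1$ (equivalently, the unit $X\to T^RT(X)$). Concretely, the lax structure map $T^R(V)\otimes T^R(W)\to T^R(V\otimes W)$ is constructed from the action map $T\colon\cC\to\cD$, the half-braiding $\tau$, and the counit of the adjunction, so its interaction with $\sigma_{A,A}$ is governed entirely by how $\tau$ behaves on objects of the form $T(X)$. The crucial point is that on the unit object $V=W=1$ the relevant instance of $\tau$ is $\tau_{1,1}=\id$ (or rather reduces via the monoidal unit constraints to the identity), so that conjugating the multiplication by the braiding $\sigma_{A,A}$ leaves it unchanged. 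In other words, the asymmetry that the braiding would introduce is exactly cancelled by the $\tau$-mediated symmetry of the lax structure map on the unit.

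**Expected obstacle and conclusion.** The main obstacle is bookkeeping rather than conceptual: I must track the three coherence isomorphisms (the monoidal structure of $T$, the half-braiding $\tau$, and the unit/counit of the $T\dashv T^R$ adjunction) simultaneously, and verify that when everything is evaluated at the unit object these conspire to give $m\circ\sigma_{A,A}=m$. The cleanest route avoids hand computation: I would observe that the lax $\cC$-monoidal structure of $T^R$ promotes $T^R$ to a functor landing in the relative Drinfeld center, equivalently that $A=T^R(1)$ is an algebra in $\Z_\cC(\cD)$ whose half-braiding is pinned down to be inverse to $\tau$, and an algebra object whose multiplication commutes with its own half-braiding in a braided category is commutative. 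Since $\cref{prop:rightadjointlaxmonoidal}$ supplies exactly the compatibility of the lax structure with $\sigma$ and $\tau$, this identifies $m\circ\sigma_{A,A}$ with $m$ and establishes commutativity. I expect the entire proof to be short once \cref{prop:rightadjointlaxmonoidal} is invoked.
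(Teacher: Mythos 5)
Your proposal is correct and takes essentially the same route as the paper: the paper's proof likewise specializes \cref{prop:rightadjointlaxmonoidal} to $V=1$ and uses that $\tau_{-,1}$ is the unit isomorphism to identify the two composites $T^R(1)\otimes T^R(1)\rightarrow T^R(1)$ with $m$ and $m\circ\sigma_{T^R(1),T^R(1)}$. Two minor slips worth fixing: the trivial instance is $\tau_{T^R(1),1}$ (triviality comes from the unit of $\cD$ sitting in the \emph{second} slot, for an arbitrary object of $\cC$ in the first), not $\tau_{1,1}$; and your closing remark misplaces $A=T^R(1)$, which lives in $\cC$, whereas $\Z_{\cC}(\cD)$ consists of objects of $\cD$ with half-braidings, so the ``algebra in the relative center'' packaging would apply to $\cF=TT^R(1)$ rather than to $T^R(1)$ itself.
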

\begin{proof}
Consider the diagram
\[
\xymatrix{
T^R(1) \otimes T^R(1) \ar^{\sigma_{T^R(1), T^R(1)}}[rr] \ar[d] && T^R(1)\otimes T^R(1) \ar[d] \\
T^R(TT^R(1)\otimes 1) \ar^{\tau_{T^R(1), 1}}[rr] \ar[dr] && T^R(1\otimes TT^R(1)) \ar[dl] \\
& T^R(1)
}
\]
The top square commutes by \cref{prop:rightadjointlaxmonoidal} and the bottom square commutes since $\tau_{-, 1}$ is the unit isomorphism. The two composites $T^R(1)\otimes T^R(1)\rightarrow T^R(1)$ coincide with the multiplication $T^R(1)\otimes T^R(1)\rightarrow T^R(1)$ and so the diagram expresses commutativity of the algebra $T^R(1)$.
\end{proof}

\begin{remark}
There is a related notion of quantum Manin pairs $(\cC, A)$ introduced in \cite[Definition 4.2]{DMNO} which are given by a non-degenerate braided fusion category $\cC$ and a commutative algebra $A$ in $\cC$ satisfying certain assumptions. Then the pair $(\cC, \LMod_A(\cC))$ is a quantum Manin pair in our sense.
\end{remark}

Define
\[\cF=TT^R(1)\]
which is an algebra in $\cD$. The counit of the adjunction $T\dashv T^R$ gives rise to an algebra map
\begin{equation}
\epsilon\colon \cF\rightarrow 1
\label{eq:epsilon}
\end{equation}
in $\cD$.

\begin{remark}
Consider a group pair $(D, G)$. Recall from \cite[Section 3.5]{AKS} that $D/G$ is a quasi-Poisson $D$-space. Then we may think of $T^R(1)$ as a quantization of $\cO(D/G)\in\Rep D$.
\end{remark}

\subsection{Monadic case}

Consider a quantum Manin pair $(\cC, \cD)$.

\begin{defn}
The action functor $T\colon \cC\rightarrow \cD$ \defterm{satisfies the projection formula} if the natural morphism
\[X\otimes T^R(V)\longrightarrow T^R(T(X)\otimes V)\]
is an isomorphism for every $X\in\cC$ and $V\in\cD$.
\end{defn}

\begin{remark}
By \cref{prop:rightadjointlaxmonoidal} $T$ satisfies the projection formula iff
\[T^R(V)\otimes X\rightarrow T^R(V\otimes T(X))\]
is an isomorphism.
\end{remark}

Suppose $T$ satisfies the projection formula. Note that since $T^R(1)$ is a commutative algebra, $\LMod_{T^R(1)}(\cC)$ is a $\cC$-monoidal category by \cref{ex:modulescommutativealgebra}.

\begin{prop}
The natural functor $\cD\rightarrow \LMod_{T^R(1)}(\cC)$ given by $V\mapsto T^R(V)$ is a functor of $\cC$-monoidal categories.
\end{prop}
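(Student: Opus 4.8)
The plan is to exhibit the functor $V\mapsto T^R(V)$ as strong monoidal and as intertwining the two braided $\cC$-actions, i.e. as a functor of $\cC$-monoidal categories. First I would record that the functor is well defined: since $T^R$ is lax monoidal, $T^R(V)$ is a module over the algebra $T^R(1)$ via the structure map $T^R(1)\otimes T^R(V)\to T^R(1\otimes V)=T^R(V)$, and by \cref{prop:REAcommutative} the algebra $T^R(1)$ is commutative, so by \cref{ex:modulescommutativealgebra} the target $\LMod_{T^R(1)}(\cC)$ is a $\cC$-monoidal category with unit $T^R(1)$ and tensor product $\otimes_{T^R(1)}$. As $T^R(1)$ is the monoidal unit, the unit constraint of our functor is the identity, so it remains to produce the product constraint and to check compatibility with the $\cC$-actions.

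For compatibility with the $\cC$-actions I would argue directly from the projection formula. The $\cC$-monoidal structure on $\LMod_{T^R(1)}(\cC)$ is given by the free-module functor $X\mapsto T^R(1)\otimes X$, whereas precomposing our functor with $T$ sends $X$ to $T^R(T(X))$. Taking $V=1$ in the projection formula yields a natural isomorphism $X\otimes T^R(1)\xrightarrow{\sim}T^R(T(X)\otimes 1)=T^R(T(X))$, which together with the braiding of $\cC$ identifies $T^R\circ T$ with the free-module functor. I would then check that this identification is monoidal and carries the braiding of $\cC$ (defining the central structure on $\LMod_{T^R(1)}(\cC)$) to the isomorphisms $\tau$ (defining the central structure on $\cD$); this is exactly the lax $\cC$-monoidality of $T^R$ recorded in \cref{prop:rightadjointlaxmonoidal}, upgraded to an isomorphism by the projection formula.

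The product constraint is the map $\mu_{V,W}\colon T^R(V)\otimes_{T^R(1)}T^R(W)\to T^R(V\otimes W)$ induced by the lax structure of $T^R$, which coequalizes the two $T^R(1)$-actions. Showing that $\mu_{V,W}$ is an isomorphism is the main obstacle. On free objects it is immediate: for $V=T(X)$ the projection formula identifies $T^R(T(X))$ with the free module $T^R(1)\otimes X$, so both sides reduce to $X\otimes T^R(W)$ and $\mu$ is the resulting identification. The difficulty for general $V$ is that $T^R$ is a right adjoint and so need not preserve the colimit defining $\otimes_{T^R(1)}$. To circumvent this I would apply the projection formula term by term to the bar simplicial object computing $T^R(V)\otimes_{T^R(1)}T^R(W)$, identifying it with the image under $T^R$ of the two-sided bar construction $B(TT^R(V),\cF,W)$ in $\cD$, where $TT^R(V)$ carries its natural $\cF$-module structure and $W$ the trivial $\cF$-action through $\epsilon\colon\cF\to 1$. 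This augmented simplicial object in $\cD$ admits extra degeneracies coming from the algebra unit $1\to\cF$, hence is split with colimit $V\otimes W$; being split, its colimit is absolute and is therefore preserved by $T^R$, which yields that $\mu_{V,W}$ is an isomorphism. The technical heart of the argument is the bookkeeping that matches the two $T^R(1)$-actions in the bar object with the $\cF$-action and the augmentation $\epsilon$ under the projection formula.
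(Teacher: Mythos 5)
Your argument is essentially the paper's: the projection formula supplies the $\cC$-module compatibility, and the monoidal constraint is shown to be an isomorphism by transporting the diagram computing $\otimes_{T^R(1)}$ through the projection formula into $\cD$ and concluding that the resulting colimit is absolute, hence preserved by $T^R$ --- the paper does exactly this with the two-term fork $TT^RTT^R(V)\rightrightarrows TT^R(V)\rightarrow V$, which it declares a split coequalizer (your full bar construction versus the paper's truncated fork is immaterial, cf.\ \cref{rmk:totalizationtruncation}). One caveat: your claim that the unit $1\to\cF$ furnishes extra degeneracies for $B(TT^R(V),\cF,W)$ (with $W$ in the trivial-module slot) is only justified once one knows $TT^R(V)$ is free as an $\cF$-module, i.e.\ $TT^R(-)\cong(-)\otimes\cF$, which is where a monadicity-type hypothesis enters --- but this is precisely the same leap the paper makes in asserting its fork is split, so your proof matches the paper's argument at the paper's own level of rigor.
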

\begin{proof}
The projection formula implies that $\cD\rightarrow \LMod_{T^R(1)}(\cC)$ is a functor of $\cC$-module categories. Let us now show that it is strictly compatible with the monoidal structures. The natural lax monoidal structure on $T^R\colon \cD\rightarrow \cC$ gives a fork
\[
\xymatrix{
T^R(V)\otimes T^R(1)\otimes T^R(W) \ar@<.5ex>[r] \ar@<-.5ex>[r] & T^R(V)\otimes T^R(W) \ar[r] & T^R(V\otimes W)
}
\]
in $\cC$ which we have to prove is a coequalizer which will give the required monoidal structure
\[T^R(V)\otimes_{T^R(1)} T^R(W)\cong T^R(V\otimes W).\]

Using the projection formula we may identify the above fork with $T^R(-\otimes W)$ applied to the fork
\[
\xymatrix{
TT^R(V)\otimes TT^R(1) \ar@<.5ex>[r] \ar@<-.5ex>[r] & TT^R(V) \ar[r] & V
}
\]
in $\cD$.

Using the monoidal structure on $T$ and the projection formula again, we identify the above fork with
\[
\xymatrix{
TT^RTT^R(V) \ar@<.5ex>[r] \ar@<-.5ex>[r] & TT^R(V) \ar[r] & V
}
\]
which is a split coequalizer.
\end{proof}

\begin{remark}
Consider the quantum Manin pair from \cref{ex:factorizableManinpair}. If $\cD$ is rigid, i.e. every compact object is dualizable, then \cite[Proposition 3.17]{BZBJ1} implies that $T\colon \cD\otimes \cD\rightarrow \cD$ satisfies the projection formula and $T^R\colon \cD\rightarrow \cD\otimes \cD$ is monadic.
\end{remark}

We have a natural isomorphism
\[\tau_{T^R(1), V}\colon \cF\otimes V\xrightarrow{\sim} V\otimes \cF\]
for any $V\in\cD$ given by the $\cC$-monoidal structure on $\cD$ called the \emph{field goal isomorphism} in \cite[Corollary 4.6]{BZBJ2}. In particular, this induces a monoidal structure on $\LMod_\cF(\cD)$ given by turning a left $\cF$-module into a right $\cF$-module using the field goal isomorphism and applying the relative tensor product of modules. 

Recall $T^R\colon\cD\rightarrow \cC$ is monadic if the natural functor
\[\cD\longrightarrow \LMod_{T^R(1)}(\cC)\]
is an equivalence.

\begin{prop}
Suppose $T\colon \cC\rightarrow \cD$ satisfies the projection formula and $T^R\colon \cD\rightarrow \cC$ is monadic. Then there is a monoidal equivalence
\[\HC\cong \LMod_\cF(\cD)\]
where the forgetful functor $\HC\rightarrow \cD$ is right adjoint to the monoidal functor $\cD\rightarrow \HC$ given by $V\mapsto 1\boxtimes V$.
\label{prop:HCmonadic}
\end{prop}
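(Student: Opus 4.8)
The plan is to use monadicity to replace one tensor factor of $\HC$ by modules over $\cC$ and then invoke \cref{prop:modtensorproduct}. Since $T^R$ is monadic and $A := T^R(1)$ is a commutative algebra by \cref{prop:REAcommutative}, the preceding proposition shows that $V \mapsto T^R(V)$ is an equivalence of $\cC$-monoidal categories $\cD \xrightarrow{\sim} \LMod_A(\cC)$. Applying this to the (right $\cC$-module) left factor of $\HC = \cD \otimes_\cC \cD$ gives $\HC \simeq \LMod_A(\cC) \otimes_\cC \cD$, and \cref{prop:modtensorproduct}, applied with the ambient monoidal category $\cC$, the algebra $A \in \cC$, and the $\cC$-module category $\cM = \cD$, identifies this with $\LMod_A(\cD)$, where $A$ acts on $V \in \cD$ through the $\cC$-action, i.e. by $TT^R(1) \otimes (-) = \cF \otimes (-)$. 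Thus on underlying categories $\HC \simeq \LMod_\cF(\cD)$; tracing the equivalences, the free module $A \in \LMod_A(\cC)$ corresponding to $1 \in \cD$ sends $1 \boxtimes V$ to the free $\cF$-module $\cF \otimes V$.

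It then remains to upgrade this to a monoidal equivalence, and here I would use the universal property of \cref{prop:monoidaltensorproduct}: a colimit-preserving monoidal functor out of $\HC$ is the same as a pair of monoidal functors $F_\cD \colon \cD \to \LMod_\cF(\cD)$ and $F_{\cD^{\mop}} \colon \cD^{\mop} \to \LMod_\cF(\cD)$, a distributive law $\beta$ between them, and a monoidal isomorphism $\alpha$ relating their restrictions along $T$ satisfying the displayed coherence. I would take $F_\cD$ to be the free-module functor $V \mapsto \cF \otimes V$, which is monoidal because $\cF \otimes (V \otimes W) \cong (\cF \otimes V) \otimes_\cF (\cF \otimes W)$, and $F_{\cD^{\mop}}$ the functor built from the right $\cD$-action on $\LMod_\cF(\cD)$. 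The essential observation is that the distributive law $\beta$ on $\LMod_\cF(\cD)$ which matches the one carried by the monoidal projection $\cD^{\mop} \otimes \cD \to \HC$ is precisely the field goal isomorphism $\tau_{T^R(1), -}$ of \cite[Corollary 4.6]{BZBJ2} that defines the $\otimes_\cF$ monoidal structure, while the coherence condition on $\alpha$ records the compatibility of the two $\cF$-module structures with $\tau$. The monoidal functor $\HC \to \LMod_\cF(\cD)$ produced this way agrees with the equivalence of the first paragraph on underlying categories, hence is a monoidal equivalence.

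Finally, the monoidal functor $\cD \to \HC$, $V \mapsto 1 \boxtimes V$, is carried under this equivalence to the free-module functor $F_\cD$, whose right adjoint is exactly the forgetful functor $\LMod_\cF(\cD) \to \cD$; transporting back yields the claimed description of the forgetful functor $\HC \to \cD$. I expect the main obstacle to be the monoidality verification of the second paragraph: one must check that the distributive law intrinsic to the monoidal structure on the relative tensor product, coming from the simplicial object in $\Alg(\PrL)$ of \cref{sect:monoidalmodule}, is genuinely transported to the field goal isomorphism, and that both coherence diagrams of \cref{prop:monoidaltensorproduct} hold on the nose. Throughout, the projection formula is what makes $T^R$ strictly $\cC$-monoidal (via the preceding proposition) and guarantees that the identification $\LMod_A(\cD) \cong \LMod_\cF(\cD)$ is compatible with the monoidal structures, so I would keep careful track of where it enters.
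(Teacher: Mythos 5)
Your proposal is correct and follows essentially the same route as the paper: both transport along the $\cC$-monoidal equivalence $\cD\cong\LMod_{T^R(1)}(\cC)$ supplied by monadicity and the projection formula, rewrite $\HC\cong\LMod_{T^R(1)}(\cC)\otimes_\cC\cD$, and conclude via \cref{prop:modtensorproduct}. Your second paragraph merely spells out, via the universal property of \cref{prop:monoidaltensorproduct} and the field goal isomorphism, the monoidality verification that the paper's proof leaves implicit in the phrase ``gives rise to a monoidal equivalence.''
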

\begin{proof}
The $\cC$-monoidal equivalence $\cD\cong \LMod_{T^R(1)}(\cC)$ gives rise to a monoidal equivalence
\[\HC\cong \LMod_{T^R(1)}(\cC)\otimes_\cC \cD.\]
We have a natural monoidal functor
\[\LMod_{T^R(1)}(\cC)\otimes_\cC\cD\longrightarrow \LMod_\cF(\cD)\]
given by $M\boxtimes V\mapsto T(V)\otimes V$ which by \cref{prop:modtensorproduct} is an equivalence.
\end{proof}

\begin{example}
In the case of the quantum Manin pair $(\cD\otimes \cD^{\bop}, \cD)$ we get the monoidal equivalence $\HC\cong \LMod_\cF(\cD)$ from \cite[Section 4.2]{BZBJ2}.
\end{example}

\subsection{Quantum Manin triples}

\begin{defn}
A \defterm{quantum Manin triple} is a triple $(\cC, \cD, \cE)$ where $(\cC, \cD)$ and $(\cC, \cE)$ are quantum Manin pairs together with a monoidal functor
\[\cE\otimes_\cC \cD\longrightarrow \Mod_k.\]
\label{def:quantumManintriple}
\end{defn}

\begin{remark}
There is a related notion of quantum Manin triples $(\cC, A)$ introduced in \cite[Definition 4.13]{DMNO} which are given by a non-degenerate braided fusion category $\cC$ and a pair of commutative algebras $A,B$ in $\cC$ such that the category of $(A,B)$-bimodules in $\cC$ is equivalent to $\Mod_k$ and which satisfy some extra assumptions. We have $\cC$-monoidal categories $\cD = \LMod_A(\cC)$ and $\cE=\LMod_B(\cC)$. Moreover, the category $\cE\otimes_{\cC}\cD$ is equivalent to the category of $(A,B)$-bimodules in $\cC$ and hence $(\cC, \LMod_A(\cC), \LMod_B(\cC))$ gives a quantum Manin triple in our sense.
\end{remark}

\begin{remark}
Suppose $(\fd, \g, \g^*)$ is a Manin triple which integrates to a group triple $(D, G, G^*)$. As shown in \cite[Proposition 4.17]{PoissonLie}, in this case the maps of classifying stacks
\[
\xymatrix{
& \pt \ar[dl] \ar[dr] & \\
\B G^* \ar[dr] && \B G \ar[dl] \\
& \B D &
}
\]
form an iterated Lagrangian correspondence. The notion of a quantum Manin triple may be thought of as a quantization of this Lagrangian correspondence.
\end{remark}

Let $T\colon \cC\rightarrow \cD$ and $\tilde{T}\colon \cC\rightarrow \cE$ be the action functors. Using \cref{prop:monoidaltensorproduct} we may unpack the monoidal functor $\cE\otimes_\cC\cD\rightarrow \Mod_k$ in terms of the following data:
\begin{itemize}
\item A pair of monoidal functors $F\colon \cD\rightarrow \Mod_k$ and $\tilde{F}\colon \cE^{\mop}\rightarrow \Mod_k$.

\item A distributive law $\beta_{V, W}\colon F(V)\otimes \tilde{F}(W)\xrightarrow{\sim} \tilde{F}(W)\otimes F(V)$.

\item A monoidal isomorphism $\alpha\colon F\circ T\xrightarrow{\sim} \tilde{F}\circ \tilde{T}$.
\end{itemize}

In particular, we get two algebras $F^R(k)\in\cD$ and $\tilde{F}^R(k)\in\cE$ so that their images $FF^R(k), \tilde{F}\tilde{F}^R(k)\in\Mod_k$ are bialgebras, where the coproducts come from the comonad structures on $FF^R$ and $\tilde{F}\tilde{F}^R$ (see \cite[Section 5.4]{EGNO}). The algebra map $\epsilon\colon \cF\rightarrow 1$ gives rise to an algebra map $F(\epsilon)\colon \tilde{F}\tilde{T} T^R(1)\cong F(\cF)\rightarrow k$ which by adjunction gives an algebra map
\begin{equation}
\tilde{T} T^R(1)\longrightarrow \tilde{F}^R(k).
\label{eq:REAFRT}
\end{equation}

\begin{defn}
A \defterm{factorizable quantum Manin triple} is a quantum Manin triple $(\cD\otimes \cD^{\bop}, \cD, \cE)$, where $(\cD\otimes \cD^{\bop}, \cD)$ is the quantum Manin pair from \cref{ex:factorizableManinpair}.
\end{defn}

\begin{prop}
Suppose $T\colon \cD\otimes \cD\rightarrow \cD$ satisfies the projection formula and $T^R\colon \cD\rightarrow \cD\otimes \cD$ preserves colimits. Then we have an isomorphism of algebras
\[F(\cF)\cong FF^R(k).\]
\label{prop:factorizableFRTREA}
\end{prop}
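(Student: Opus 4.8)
The plan is to realise both $F(\cF)$ and $FF^R(k)$ as values of the single functor $FTT^R$ on two different arguments, and then to compare them by cancelling a common tensor factor. First I would record the consequence of the projection formula that $TT^R(X)\cong X\otimes\cF$ naturally in $X\in\cD$. Indeed, taking $V=1$ and the object $Y\boxtimes 1\in\cD\otimes\cD^{\bop}$ in the projection formula gives $(Y\boxtimes 1)\otimes T^R(1)\cong T^R(Y)$, and applying the strong monoidal action functor $T$ yields $TT^R(Y)\cong Y\otimes TT^R(1)=Y\otimes\cF$. In particular $TT^R$ is the comonad $-\otimes\cF$ on $\cD$, with the centrality of $\cF$ provided by the field goal isomorphism $\tau_{T^R(1),-}$.

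Next I would analyse $F\circ T$. Since in the factorizable case $T(X\boxtimes Y)=X\otimes Y$ and $F$ is monoidal, there is a natural isomorphism $F\circ T\cong F\boxtimes F$ of functors $\cD\otimes\cD^{\bop}\to\Mod_k$, using $\Mod_k\otimes\Mod_k\cong\Mod_k$. Passing to right adjoints gives $T^R\circ F^R\cong F^R\boxtimes F^R$, and evaluating at $k$ gives $T^R F^R(k)\cong F^R(k)\boxtimes F^R(k)$ in $\cD\otimes\cD^{\bop}$; applying $T$ then gives $TT^R F^R(k)\cong F^R(k)\otimes F^R(k)$ in $\cD$.

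Now I would compute $FTT^R(F^R(k))$ in two ways. Using $TT^R(X)\cong X\otimes\cF$ together with monoidality of $F$ gives $FTT^R(F^R(k))\cong FF^R(k)\otimes F(\cF)$, while the previous step gives $FTT^R(F^R(k))\cong F(F^R(k)\otimes F^R(k))\cong FF^R(k)\otimes FF^R(k)$. Hence $FF^R(k)\otimes F(\cF)\cong FF^R(k)\otimes FF^R(k)$. To conclude I would cancel the left tensor factor: the bialgebra $FF^R(k)$ carries its counit $FF^R(k)\to k$, which is an algebra map, and base change $k\otimes_{FF^R(k)}(-)$ along it sends $FF^R(k)\otimes M$ to $M$; applied to the displayed isomorphism this yields $F(\cF)\cong FF^R(k)$.

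The hard part is upgrading these identifications, which are routine as isomorphisms of plain functors, to an isomorphism of \emph{algebras}. Two points require care. First, the monoidal structure on the action functor $T$ involves the braiding of $\cD$ (through $\tau$ and the field goal isomorphism), so $F\circ T\cong F\boxtimes F$ is a priori only an isomorphism of underlying functors; to obtain a multiplicative comparison one must use the distributive law of the quantum Manin triple, which is precisely the datum allowing the relevant swaps to be carried out compatibly with $F$. Second, one must check that the two computations of $FTT^R(F^R(k))$ are isomorphisms of algebras that are moreover left $FF^R(k)$-linear in the first tensor factor, so that base change along the counit produces an algebra map rather than merely a $k$-linear one; verifying that the projection-formula and field goal isomorphisms are module- and algebra-compatible is the main thing to establish.
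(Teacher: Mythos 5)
Your argument breaks at the step ``passing to right adjoints gives $T^R\circ F^R\cong F^R\boxtimes F^R$''. From $F\circ T\cong \otimes\circ(F\boxtimes F)$ you may formally conclude $T^RF^R\cong (F\boxtimes F)^R$ (up to $\Mod_k\otimes\Mod_k\cong\Mod_k$), but the further identification $(F\boxtimes F)^R\cong F^R\boxtimes F^R$ is not automatic in $\PrL$: the right adjoint of a tensor product of functors decomposes factorwise only under extra hypotheses, typically that the individual right adjoints preserve colimits. The proposition assumes nothing about $F^R$ --- its hypotheses constrain only $T$ (projection formula) and $T^R$ (colimit preservation). Concretely, for a forgetful functor $F\colon\LMod_A\rightarrow\Mod_k$ with $A$ an infinite-dimensional bialgebra one has $F^R(M)=\Hom_k(A,M)$, which does not preserve colimits, and $(F\boxtimes F)^R(k)=(A\otimes A)^*$ strictly contains $A^*\otimes A^*$; so the factorization genuinely fails in this generality. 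Since your second computation of $FTT^R(F^R(k))$, and hence the entire cancellation argument, rests on this step, the proof does not go through from the stated hypotheses. (The issues you flag yourself --- $FF^R(k)$-linearity of the two identifications so that base change along the counit yields an algebra isomorphism --- are real but secondary; the unproved adjoint factorization is the fatal one.)

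The paper's proof is engineered to use only the hypotheses on $T$: the projection formula makes $T^R$ a morphism of $\cD\otimes\cD$-module categories, and colimit preservation makes it an honest 1-morphism in $\LMod_{\cD\otimes\cD}(\PrL)$; applying the 2-functor $\LMod_{\cD\otimes\cD}\rightarrow\LMod_{\cD}$ induced by $F\otimes\id$ then carries the adjunction $T\dashv T^R$ to $F\dashv F^R$ and yields the base-change isomorphism $(F\otimes\id)T^R\xrightarrow{\sim}F^RF$, whence $(F\otimes F)T^R\xrightarrow{\sim}FF^RF$. Evaluating at $1\in\cD$ gives $F(\cF)\cong(F\otimes F)T^R(1)\cong FF^R(k)$ directly: both sides appear at once, no tensor factor ever has to be cancelled, and no adjoint of $F$ alone is ever factorized. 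If you want to rescue your route, you must add the hypothesis that $F^R$ preserves colimits (true in the comodule examples, where $F^R$ is the cofree comodule functor, but not granted by the proposition), and then still carry out the linearity and multiplicativity checks you deferred --- at which point the base-change argument is both shorter and strictly more general.
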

\begin{proof}
The functor $T\colon \cD\otimes \cD\rightarrow \cD$ is a 1-morphism in $\LMod_{\cD\otimes \cD}$ and by assumptions it admits a right adjoint $T^R\colon \cD\rightarrow \cD\otimes \cD$ in the same 2-category. We have a monoidal functor $F\otimes \id\colon \cD\otimes \cD\rightarrow \cD$ which gives rise to a 2-functor
\[\LMod_{\cD\otimes \cD}\longrightarrow \LMod_\cD\]
under which the image of $T$ is $F$. Therefore, the image of $T^R$ under the above functor is $F^R$. The commutative diagram of categories
\[
\xymatrix{
\cD \ar^{T^R}[r] \ar[d] & \cD\otimes \cD \ar[d] \\
\cD\otimes_\cD\Mod_k \ar^-{T^R\otimes \id}[r] & (\cD\otimes\cD)\otimes_\cD\Mod_k
}
\]
then implies that the natural morphism
\[(F\otimes \id)T^R\rightarrow F^R F\]
is an isomorphism. Therefore,
\[(F\otimes F)T^R\rightarrow F F^R F\]
is an isomorphism and the claim follows.
\end{proof}

\begin{example}
As shown in \cite[Propositions 3.11, 3.12]{BZBJ1}, the assumptions of the proposition are satisfied when $\cD$ is rigid, i.e. every compact object is dualizable.
\end{example}

\subsection{Coalgebras}
\label{sect:coalgebras}

In this section we describe quantum Manin triples arising from a triple of bialgebras. Recall the notion of a skew-pairing $\gamma\colon H^\vee\otimes H\rightarrow k$ of Hopf algebras, see e.g. \cite[Definition 8.3]{KlimykSchmudgen}. We denote by $\gamma^{-1}\colon H^\vee\otimes H\rightarrow k$ the convolution inverse and by $\overline{\gamma}\colon H\otimes H^\vee\rightarrow k$ the inverse skew-pairing which is $\gamma^{-1}$ precomposed with the tensor flip. Then we have the following notion, see e.g. \cite[Definition 10.1]{KlimykSchmudgen}.

\begin{defn}
Let $D$ be a Hopf algebra. A \defterm{coquasitriangular structure} on $D$ is a skew-pairing $\br_D\colon D\otimes D\rightarrow k$ such that
\begin{equation}
\br_D(a_{(1)}, b_{(1)})a_{(2)}b_{(2)} = \br_D(a_{(2)}, b_{(2)})b_{(1)}a_{(1)}.
\label{eq:Rmatrix}
\end{equation}
\end{defn}

For a Hopf algebra $D$ the category $\CoMod_D$ of right $D$-comodules is locally presentable \cite[Corollary 26]{Wischnewsky} and hence is a monoidal category in our sense. Moreover, if $D$ is a coquasitriangular Hopf algebra, $\CoMod_D$ is a braided monoidal category, where the braiding $V\otimes W\xrightarrow{\sim} W\otimes V$ is defined by
\[v\otimes w\mapsto \br_D(v_{(1)}, w_{(1)}) w_{(0)}\otimes v_{(0)}\]
The opposite braided monoidal structure on $\CoMod_D$ is defined using $\overline{\br}$.

\begin{defn}
Let $D$ be a coquasitriangular Hopf algebra and $f\colon D\rightarrow H$ a morphism of Hopf algebras. A \defterm{$D$-coquasitriangular structure} on $H$ is a skew-pairing $\br_H\colon D\otimes H\rightarrow k$ such that
\begin{enumerate}
\item $\br_H(d_1, f(d_2)) = \br_D(d_1, d_2)$ for all $d_1, d_2\in D$.

\item $\br_H(d_{(1)}, h_{(1)}) f(d_{(2)})h_{(2)} = \br_H(d_{(2)}, h_{(2)})h_{(1)}f(d_{(1)})$ for all $d\in D$ and $h\in H$.
\end{enumerate}
\end{defn}

\begin{remark}
If $f\colon D\rightarrow H$ is surjective, a $D$-coquasitriangular structure on $H$ exists iff $\br_D(d_1, d_2) = 0$ for every $d_1, d_2\in D$ such that $f(d_2) = 0$ in which case it is unique.
\end{remark}

Given a $D$-coquasitriangular structure on a Hopf algebra $H$, we get a $\CoMod_D$-monoidal structure on $\CoMod_H$. Now fix the following data:
\begin{itemize}
\item A coquasitriangular Hopf algebra $D$.

\item A $(D, \br_D)$-coquasitriangular Hopf algebra $H$ with a Hopf map $f\colon D\rightarrow H$.

\item A $(D, \overline{\br}_D)$-coquasitriangular Hopf algebra $H^\vee$ with a Hopf map $g\colon D\rightarrow H^\vee$.

\item A skew-pairing $\ev\colon H^\vee\otimes H\rightarrow k$ such that
\[\br_H(d, h) = \ev(g(d), h),\qquad \br_{H^\vee}^{-1}(d, h) = \ev(h, f(d)).\]
\end{itemize}

\begin{remark}
The compatibility between the skew-pairings implies that the data boils down to a triple of Hopf algebras $(D, H, H^\vee)$ together with a skew-pairing $\ev\colon H^\vee\otimes H\rightarrow k$ such that $\br_H$, $\br_{H^\vee}$ and $\br_D$ defined in terms of $\ev$ satisfy the respective versions of \eqref{eq:Rmatrix}.
\end{remark}

\begin{remark}
We do not assume that $\ev$ is nondegenerate, so $H^\vee$ is not necessarily the linear dual to $H$.
\end{remark}

Define
\[\cC = \CoMod_D, \qquad \cD = \CoMod_H, \qquad \cE = \CoMod_{{H^\vee}^{op}}.\]

Using the skew-pairing $\ev$ we get a distributive law $\beta_{W, V}\colon W\otimes V\rightarrow V\otimes W$ for any $W\in\cE$ and $V\in\cD$ defined by
\[\beta_{W, V}(w\otimes v) = v_{(0)}\otimes w_{(0)} \ev(w_{(1)}, v_{(1)}).\]
Therefore, by \cref{prop:monoidaltensorproduct} we get a monoidal functor
\[\cE\otimes_{\cC} \cD\longrightarrow \Mod_k\]
and hence $(\CoMod_D, \CoMod_H, \CoMod_{(H^\vee)^{op}})$ becomes a quantum Manin triple.

\begin{remark}
The skew-pairing $\ev$ allows one to turn a right $H$-comodule $V$ into a left $H^\vee$-module via
\begin{equation}
h\triangleright v = v_{(0)} \ev(h, v_{(1)}).
\label{eq:comoduletomodule}
\end{equation}
This gives a monoidal functor
\begin{equation}
\CoMod_H\longrightarrow \LMod_{H^\vee}^{\mop}.
\label{eq:fouriertransform}
\end{equation}
Then the distributive law $\beta$ can be written as
\[\beta_{W, V}(w\otimes v) = (w_{(1)}\triangleright v)\otimes w_{(0)}.\]
\end{remark}

\subsection{Classical groups}
\label{sect:classicalManinTriple}

In this section we work out an example of a quantum Manin triple coming from an algebraic group.

Let $G$ be an affine group scheme over $k$ and $\g$ its Lie algebra which we assume is flat over $k$. Consider the symmetric monoidal category $\Rep G = \CoMod_{\cO(G)}$. $\U\g$ is a Hopf algebra in $\Rep G$, so $\cC=\CoMod_{\U\g}(\Rep G)$ is a monoidal category. We define the skew-pairing
\[\ev\colon \U\g\otimes \cO(G)\longrightarrow k\]
such that $\ev(x, f)$ is the derivative of $f\in\cO(G)$ along $x\in\mathfrak{g}$ at the unit. Then for $W\in\CoMod_{\U\g}$ and $V\in\Rep G$ we have a braiding isomorphism $W\otimes V\rightarrow V\otimes W$ given by
\begin{equation}
w\otimes v\mapsto (w_{(1)}\triangleright v)\otimes w_{(0)}.
\label{eq:Ugbraiding}
\end{equation}
This endows $\cC$ with a braiding and $\Rep G$ and $\CoMod_{\U\g}$ with structures of $\cC$-monoidal categories. The action functors $T\colon \cC\rightarrow \Rep G$ and $\tilde{T}\colon \cC\rightarrow \CoMod_{\U\g}$ are the forgetful functors.

\begin{prop} $ $
\begin{enumerate}
\item The functor $T^R\colon \Rep G\rightarrow \CoMod_{\U\g}(\Rep G)$ is given by the cofree $\U\g$-comodule $T^R(V) = \U\g\otimes V$, the counit $TT^R(V)\rightarrow V$ is given by the counit on $\U\g$ and the lax monoidal structure on $T^R(V)$ is given by the algebra structure on $\U\g$.

\item The functor $T^R\colon \Rep G\rightarrow \CoMod_{\U\g}(\Rep G)$ is monadic.

\item The functor $T\colon \CoMod_{\U\g}(\Rep G)\rightarrow \Rep G$ satisfies the projection formula.
\end{enumerate}
\end{prop}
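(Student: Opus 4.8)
The plan is to prove the three assertions separately, exploiting throughout that $T\colon\cC\to\Rep G$ is simply the functor forgetting the $\U\g$-comodule structure while remembering the underlying $\cO(G)$-comodule.

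For (1), I would start from the fact that the forgetful functor from $\U\g$-comodules has as right adjoint the cofree comodule functor $V\mapsto \U\g\otimes V$, where $\U\g\otimes V$ carries the coaction induced by the comultiplication of $\U\g$ and the diagonal $\cO(G)$-comodule structure; the unit of the adjunction is (induced by) the coaction map, and the counit $TT^R(V)=\U\g\otimes V\to V$ is $\epsilon\otimes\id_V$ for the counit $\epsilon\colon\U\g\to 1$. Since $T$ is strong monoidal, its right adjoint carries a canonical lax monoidal structure (as recalled before \cref{prop:rightadjointlaxmonoidal}), obtained as the mate of the monoidal constraints of $T$. The only thing to check is that, under the identification $T^R(V)=\U\g\otimes V$, this mate expresses the lax structure map $(\U\g\otimes V)\otimes(\U\g\otimes W)\to\U\g\otimes(V\otimes W)$ through the multiplication $\U\g\otimes\U\g\to\U\g$; this is a routine diagram chase using the bialgebra axioms relating the product and coproduct of $\U\g$.

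For (2), I must show that the natural functor $\Rep G\to\LMod_{T^R(1)}(\cC)=\LMod_{\U\g}(\CoMod_{\U\g}(\Rep G))$, $V\mapsto T^R(V)$, is an equivalence. The target is the category of $\U\g$-Hopf modules in $\Rep G$, and because the module and comodule structures are tied by the same Hopf compatibility there is a canonical identification $\LMod_{\U\g}(\CoMod_{\U\g}(\Rep G))\cong\CoMod_{\U\g}(\LMod_{\U\g}(\Rep G))$. With this in hand the claim is precisely the Fundamental theorem of Hopf modules (\cref{thm:hopfmodules}) for the symmetric monoidal category $\Rep G$ and the Hopf algebra $\U\g$, whose equivalence $V\mapsto\U\g\otimes V$ matches $T^R$. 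To invoke it I only need that $\U\g$ is faithfully flat in $\Rep G$: the splitting $\U\g\cong 1\oplus\ker(\epsilon)$ makes $\U\g\otimes-$ conservative (as in the remark following \cref{thm:hopfmodules}), and since $\g$ is flat over $k$ the PBW filtration shows $\U\g$ is flat over $k$, so $\U\g\otimes-$ preserves equalizers, which are computed on underlying $k$-modules.

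For (3), I would use that the forgetful functor $T$ is conservative (the inverse of a comodule isomorphism is automatically a comodule map), so it suffices to check that the projection-formula morphism $X\otimes T^R(V)\to T^R(TX\otimes V)$ is invertible after applying $T$. On underlying objects the two sides are $TX\otimes\U\g\otimes V$ and $\U\g\otimes TX\otimes V$ (using that $T$ is strong monoidal and $T^R(V)=\U\g\otimes V$), and, working in the symmetric category $\Rep G$ where the braiding is the flip, the morphism identifies with the explicit map $x\otimes h\otimes v\mapsto x_{(1)}h\otimes x_{(0)}\otimes v$ built from the $\U\g$-coaction $x\mapsto x_{(0)}\otimes x_{(1)}$ of $X$ and the multiplication of $\U\g$. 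The hard part, and the only genuinely computational step, is inverting it: I would write the candidate inverse $h\otimes x\otimes v\mapsto x_{(0)}\otimes S(x_{(1)})h\otimes v$ using the antipode $S$ of $\U\g$ and verify that the two composites are the identity by the antipode and counit axioms. This is exactly where the full Hopf structure of $\U\g$, rather than merely its bialgebra structure, enters.
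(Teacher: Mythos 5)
Your proof is correct and rests on the same two key inputs as the paper's --- the fundamental theorem of Hopf modules (\cref{thm:hopfmodules}) and the explicit formula $x\otimes h\otimes v\mapsto x_{(1)}h\otimes x_{(0)}\otimes v$ for the projection morphism --- but it organizes them differently. The paper applies \cref{thm:hopfmodules} up front to identify $\Rep G\cong \LMod_{\U\g}(\CoMod_{\U\g}(\Rep G))$, so that $T$ becomes the free module functor $\U\g\otimes(-)$ and $T^R$ the forgetful functor, and claims (1) and (2) both fall out of this single identification; you instead prove (1) directly from the internal cofree-comodule adjunction (your mate computation identifying the lax structure with the multiplication of $\U\g$ is the right routine check, and indeed only uses the bialgebra axioms), and only then invoke \cref{thm:hopfmodules} for (2). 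A small bonus of your route is that you actually verify the faithful flatness hypothesis of \cref{thm:hopfmodules} --- conservativity via the splitting $\U\g\cong 1\oplus\ker(\epsilon)$ and $k$-flatness of $\U\g$ via PBW/Lazard from the standing assumption that $\g$ is flat over $k$ --- which the paper leaves implicit; note only that your claim that equalizers in $\Rep G$ are computed on underlying $k$-modules tacitly uses flatness of $\cO(G)$ over $k$, an assumption at the same level of implicitness as in the paper. For (3), where the paper concludes by recognizing the computed map as an instance of the braiding \eqref{eq:Ugbraiding} and hence invertible, you exhibit the inverse $h\otimes x\otimes v\mapsto x_{(0)}\otimes S(x_{(1)})h\otimes v$ explicitly and check it via the antipode and counit axioms; this is the same mathematical content (the inverse of that braiding-type map is exactly your antipode formula), but your version is more self-contained and makes transparent where the full Hopf structure enters. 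Your reduction to underlying objects via conservativity of the forgetful functor is also exactly the right device: it is what lets you construct the inverse merely as a $G$-equivariant map and get the comodule compatibility for free.
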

\begin{proof}
By \cref{thm:hopfmodules} we may identify
\[\Rep G\cong \LMod_{\U\g}(\CoMod_{\U\g}(\Rep G))\]
via $V\mapsto \U\g\otimes V$. Under this identification the functor
\[T\colon \CoMod_{\U\g}(\Rep G)\rightarrow \Rep G\cong \LMod_{\U\g}(\CoMod_{\U\g}(\Rep G))\]
is given by $V\mapsto \U\g\otimes V$. Its right adjoint $T^R$ is the forgetful functor. This proves the first claim.

Clearly, the forgetful functor $T^R\colon \LMod_{\U\g}(\CoMod_{\U\g}(\Rep G))\rightarrow \CoMod_{\U\g}(\Rep G)$ is monadic with the corresponding monad on $\CoMod_{\U\g}(\Rep G)$ identified with $\U\g\otimes -$. This proves the second claim.

The morphism appearing in the projection formula is given by the composite
\[X\otimes (\U\g\otimes V)\longrightarrow \U\g\otimes(X\otimes (\U\g\otimes V))\longrightarrow \U\g\otimes X\otimes V\]
for $X\in \CoMod_{\U\g}(\Rep G)$ and $V\in \Rep G$. Here the first morphism is given by the $\U\g$-coaction and the second morphism is given by applying counit to the second copy of $\U\g$. Explicitly, for $x\in X$ and $v\in V$ we have
\[x\otimes h\otimes v\mapsto x_{(1)} h_{(1)} \otimes x_{(0)}\otimes h_{(2)}\otimes v\mapsto x_{(1)}h\otimes x_{(0)}\otimes v,\]
but this is precisely the isomorphism \eqref{eq:Ugbraiding} applied to the first two factors.
\end{proof}

\begin{example}
We see that $T^R(1) = \U\g\in \CoMod_{\U\g}(\Rep G)$. By \cref{prop:REAcommutative} it is a commutative algebra and let us show this explicitly. It is generated by $\g\subset \U\g$, so we need to check that the generators commute. For $x,y\in\g$ the braiding \eqref{eq:Ugbraiding} gives
\[x\otimes y\mapsto [x, y]\otimes 1 + y\otimes x.\]
Since $xy = [x, y] + yx$, this shows that $\U\g\in\cC$ is a commutative algebra.
\end{example}

From this proposition we obtain an isomorphism
\[\cF\cong \U\g.\]
For $V\in\Rep G$ the isomorphism
\[\tau_{T^R(1), V}\colon \cF\otimes V\xrightarrow{\sim} V\otimes \cF\]
given by \eqref{eq:Ugbraiding} has the following expression:
\[h\otimes v\mapsto (h_{(1)} \triangleright v)\otimes h_{(2)}.\]

Using \cref{prop:HCmonadic} we may then identify
\[\HC\cong \LMod_{\U\g}(\Rep G)\]
as monoidal categories. Given an object $V\in\LMod_{\U\g}(\Rep G)$ the isomorphism $\tau_{T^R(1), V}$ endows it with the right $\U\g$-module structure, so that we may identify $\LMod_{\U\g}(\Rep G)$ with the monoidal category of Harish--Chandra bimodules, i.e. $\U\g$-bimodules where the diagonal action of $\g$ is integrable.

We have the obvious forgetful functors $F\colon \Rep G\rightarrow \Mod_k$ and $\tilde{F}\colon \CoMod_{\U\g}\rightarrow \Mod_k$. The isomorphism \eqref{eq:Ugbraiding} provides a distributive law between them so that together they assemble into a monoidal functor
\[\CoMod_{\U\g}\otimes_\cC \Rep G\longrightarrow \Mod_k\]
which gives a quantum Manin triple $(\cC, \Rep G, \CoMod_{\U\g})$.

\begin{remark}
The quantum Manin triple $(\cC, \Rep G, \CoMod_{\U\g})$ is a quantization of the group triple $(T^* G, G, \g^*)$.
\end{remark}

\begin{remark}
Using \cref{thm:hopfmodules} we may identify $\Rep G\cong \LMod_{\U\g}(\cC)$. Therefore, by \cref{prop:modtensorproduct} we get an equivalence $\CoMod_{\U\g}\otimes_\cC \Rep G\cong \LMod_{\U\g}(\CoMod_{\U\g})$ and applying \cref{thm:hopfmodules} again we deduce that $\CoMod_{\U\g}\otimes_\cC \Rep G\cong \Mod_k$ which is easily seen to be the same functor as above.
\end{remark}

We have
\[F^R(k) = \cO(G)\in\Rep G,\qquad \tilde{F}^R(k) = \U\g\in \CoMod_{\U\g}\]
equipped with the obvious algebra structures.

The counit $\epsilon\colon \cF\rightarrow 1$ in $\Rep G$ is the counit of $\U\g$. For $W\in\CoMod_{\U\g}$ the counit $V\rightarrow \tilde{F}^R\tilde{F}(V)$ is given by the coaction map $V\rightarrow \U\g\otimes V$, so the map \eqref{eq:REAFRT} is given by the composite
\[\U\g\xrightarrow{\Delta} \U\g\otimes \U\g\xrightarrow{\id\otimes\epsilon} \U\g\]
which is the identity map.

The functor \eqref{eq:fouriertransform} in our case is the obvious symmetric monoidal functor
\[\Rep G\longrightarrow \LMod_{\U\g}.\]

\subsection{Quantum groups}
\label{sect:quantumgroups}

The standard references for quantum groups are \cite{Lusztig} and \cite{ChariPressley}. We will follow the categorical presentation from \cite[Sections 4, 5]{Gaitsgory} (see also \cite[Section 4.3]{Laugwitz}). Let $k=\C$.

We fix a connected reductive group $G$ with a choice of a Borel subgroup $B_+\subset G$ and a maximal torus $T\subset B_+\subset G$. Denote by $\cLambda$ the character lattice. In addition, we fix a symmetric bilinear Weyl-invariant form
\[b'\colon \cLambda\times\cLambda\rightarrow \C^\times.\]
Denote the associated quadratic form by
\[q(\lambda) = b'(\lambda, \lambda).\]
Our assumption will be that for each simple root $\calpha_i$ the value $q(\calpha_i)$ is not a root of unity.

Let $\Rep_q(T)$ be the usual monoidal category of $\cLambda$-graded vector spaces with braiding $k^{\lambda_1}\otimes k^{\lambda_2}\xrightarrow{\sim} k^{\lambda_2}\otimes k^{\lambda_1}$ of one-dimensional vector spaces concentrated in weights $\lambda_1,\lambda_2\in\cLambda$ given by multiplication by $b'(\lambda_1, \lambda_2)$. Denote by $\inv\colon \Rep_q(T)\rightarrow \Rep_q(T)$ the braided autoequivalence given by reversing the grading.

We have a bialgebra $\U_q(\n_+)\in\Rep_q(T)$ and denote $\U_q(\n_-) = \inv(\U_q(\n_+))$. In addition, we have a nondegenerate Hopf pairing
\[\ev\colon \U_q(\n_+)\otimes \U_q(\n_-)\longrightarrow k\]
in $\Rep_q(T)$.

Denote by $\cO_q(N_+)\in\Rep_q(T)^{\bop}$ the bialgebra $\U_q(\n_-)$ with the opposite multiplication. Then we may define
\[\Rep_q(B_+) = \CoMod_{\cO_q(N_+)}(\Rep_q(T)^{\bop})\]
which is naturally a $\Rep_q(T)^{\bop}$-monoidal category (see \cref{ex:comodulesbialgebra}). We may similarly define
\[\Rep_q(B_-) = \CoMod_{\U_q(\n_+)}(\Rep_q(T))\]
which is a $\Rep_q(T)$-monoidal category.

The relative Drinfeld center $\Z_{\Rep_q(T)^{\bop}}(\Rep_q(B_+))$ is given by $\cO_q(N_+)$-comodules $M$ in $\Rep_q(T)^{\bop}$ together with a natural isomorphism $\beta_N\colon M\otimes N\xrightarrow{\sim} N\otimes M$. In particular,
\[\cO_q(N_+)\otimes M\xrightarrow{\beta_{\cO_q(N_+)}^{-1}} M\otimes \cO_q(N_+)\xrightarrow{\id\otimes \epsilon} M\]
gives an $\cO_q(N_+)$-module structure on $M$. We denote by
\[\Rep_q(G)\subset \Z_{\Rep_q(T)^{\bop}}(\Rep_q(B_+))\]
the full subcategory where the $\cO_q(N_+)$-module structure is locally nilpotent, i.e. comes from an $\cO_q(N_-)$-coaction (see \cite[Lemma 4.3.5]{Gaitsgory}). We similarly have a fully faithful braided monoidal functor
\[\Rep_q(G)^{\bop}\subset \Z_{\Rep_q(T)}(\Rep_q(B_-)).\]

To summarize:
\begin{itemize}
\item $\Rep_q(G)$ is a braided monoidal category.

\item $\Rep_q(B_+)$ is a monoidal $\Rep_q(G)\otimes \Rep_q(T)^{\bop}$-category.

\item $\Rep_q(B_-)$ is a monoidal $\Rep_q(G)^{\bop}\otimes \Rep_q(T)$-category.
\end{itemize}

Consider the $\Rep_q(T)$-monoidal structure on $\Rep_q(B_-)$ given by precomposing the obvious one with $\inv$.

\begin{defn}
The category of representations of the \defterm{dual quantum group} is
\[\Rep_q(G^*) = \Rep_q(B_-)\otimes_{\Rep_q(T)} \Rep_q(B_+).\]
\label{def:dualquantumgroup}
\end{defn}

By construction $\Rep_q(G^*)$ is a $\Rep_q(G)\otimes \Rep_q(G)^{\bop}$-monoidal category. We will now construct a quantum Manin triple corresponding to the quantum group.

The forgetful functor $\Rep_q(T)\rightarrow \Mod_k$ is comonadic and this gives the Hopf algebra $\cO_q(T) = \cO(T)$. Similarly, the forgetful functors $\Rep_q(G)\rightarrow \Rep_q(T)\rightarrow \Mod_k$ and $\Rep_q(B_\pm)\rightarrow \Rep_q(T)\rightarrow \Mod_k$ are comonadic. This gives the following Hopf algebras:
\begin{itemize}
\item $\cO_q(B_\pm)$ with Hopf maps $p_{\pm}\colon \cO_q(B_\pm)\rightarrow \cO_q(T)$ and $i_{\pm}\colon \cO_q(T)\rightarrow \cO_q(B_\pm)$.

\item $\cO_q(G)$ with Hopf maps $f\colon \cO_q(G)\rightarrow \cO_q(B_+)$ and $g\colon \cO_q(G)\rightarrow \cO_q(B_-)$.
\end{itemize}

The braiding on $\Rep_q(T)$ corresponds to a coquasitriangular structure
\[\ev^T\colon \cO_q(T)\otimes \cO_q(T)\longrightarrow k.\]
In addition, we have a skew-pairing
\[\ev\colon \cO_q(B_-)\otimes \cO_q(B_+)\longrightarrow k\]
with the following properties (we denote by $\overline{\ev}$ the inverse skew-pairing):
\begin{enumerate}
\item $\br(x, y) = \ev(g(x), f(y))$ for $x,y\in\cO_q(G)$ gives a coquasitriangular structure on $\cO_q(G)$. Its inverse is $\overline{\br}(x, y) = \overline{\ev}(f(x), g(y))$.

\item $\br_{B_+}(x, y) = \ev(g(x), y)$ for $x\in\cO_q(G)$ and $y\in\cO_q(B_+)$ gives an $(\cO_q(G), \br)$-coquasitriangular structure on $\cO_q(B_+)$.

\item $\br_{B_-}(x, y) = \overline{\ev}(f(x), y)$ for $x\in\cO_q(G)$ and $y\in\cO_q(B_-)$ gives an $(\cO_q(G), \overline{\br})$-coquasitriangular structure on $\cO_q(B_-)$.

\item $\ev(i_-(x), y)=\ev^T(x, p_+(y))$ for every $x\in\cO_q(T)$ and $y\in\cO_q(B_+)$ and similarly for $B_-$.
\end{enumerate}

We refer to \cite[Proposition 6.34]{KlimykSchmudgen} for an explicit formula for the pairing $\ev$.

Note that the last property implies that
\begin{equation}
\br_{B_+}(x, i_+(y)) = \br_{B_-}(x, i_-(\inv(y)))
\label{eq:RmatrixT}
\end{equation}
for any $x\in\cO_q(G)$ and $y\in\cO_q(T)$.

Using the first three properties of $\ev$ we may construct a monoidal functor
\[(\Rep_q(B_-)\otimes \Rep_q(B_+))\otimes_{\Rep_q(G)\otimes \Rep_q(G)^{\bop}}\Rep_q G\cong \Rep_q(B_-)\otimes_{\Rep_q(G)} \Rep_q(B_+)\longrightarrow \Mod_k\]
as in \cref{sect:coalgebras}. The equation \eqref{eq:RmatrixT} shows that this monoidal functor descends to a monoidal functor
\[\Rep_q(G^*)\otimes_{\Rep_q(G)\otimes \Rep_q(G)^{\bop}} \Rep_q(G)\longrightarrow \Mod_k\]
which gives a factorizable quantum Manin triple $(\Rep_q(G)\otimes \Rep_q(G)^{\bop}, \Rep_q(G), \Rep_q(G^*))$.

\begin{remark}
The quantum Manin triple $(\Rep_q(G)\otimes \Rep_q(G)^{\bop}, \Rep_q(G), \Rep_q(G^*))$ is a quantization of the standard Manin triple $(G\times G, G, B_+\times_T B_-)$.
\end{remark}

We are now going to explain various algebraic structures associated to this quantum Manin triple in terms of some known constructions:
\begin{itemize}
\item The algebra $\cF\in\Rep_q(G)$ as an $\cO_q(G)$-comodule is equivalent to $\cO_q(G)$ equipped with the adjoint action of $\cO_q(G)$ on itself. Moreover, by \cref{prop:factorizableFRTREA} as a plain algebra it is isomorphic to $\cO_q(G)$. Note, however, that under the adjoint action $\cO_q(G)$ is not a comodule algebra; instead, as explained in \cite[Example 6.3]{BZBJ1} $\cF$ is the so-called reflection equation algebra.

\item The algebra $\U_q(\g) = \tilde{F}^R(k)\in\Rep_q(G^*)$ is the quantum group.

\item The homomorphism \eqref{eq:REAFRT} is the so-called Rosso homomorphism
\[\cO_q(G)\longrightarrow \U_q(\g),\]
see \cite[Proposition 10.16]{KlimykSchmudgen} for the case of matrix groups.

\item The functor
\[\Rep_q(G)\longrightarrow \LMod_{\U_q(\g)}\]
given by \eqref{eq:fouriertransform} realizes any object in $\Rep_q(G)$ as a module over the quantum group.
\end{itemize}

\begin{remark}
The constructions we have described in this section work for an arbitrary quantum parameter if we use modules over the Lusztig form of the quantum group for $\Rep_q(G)$. In this case $\tilde{F}^R(k)$ recovers the De Concini--Kac form $\U_q^{DK}(\g)$ of the quantum group.
\end{remark}

\section{Moment maps}
\label{sect:momentmaps}

In this section we define and study quantum moment maps for a quantum Manin pair.

\subsection{Quantum moment maps}

Let $(\cC, \cD)$ be a quantum Manin pair. For $V\in\cD$ we have a natural isomorphism
\[\tau_{T^R(1), V}\colon \cF\otimes V\xrightarrow{\sim} V\otimes \cF.\]

\begin{defn}
Let $A\in\cD$ be an algebra. A \defterm{quantum moment map} is an algebra map $\mu\colon \cF\rightarrow A$ such that the diagram
\[
\xymatrix{
\cF \otimes A \ar^{\tau_{T^R(1), A}}[dd] \ar^-{\mu\otimes\id}[r] & A\otimes A \ar^{m}[dr] & \\
&& A \\
A\otimes \cF \ar^-{\id\otimes\mu}[r] & A\otimes A \ar_{m}[ur] &
}
\]
commutes.
\label{def:quantummomentmap}
\end{defn}

\begin{remark}
By adjunction the algebra map $\mu\colon \cF\rightarrow A$ in $\cD$ is the same as an algebra map $T^R(1)\rightarrow T^R(A)$ in $\cC$. 
\end{remark}

\begin{example}
Recall from \cref{prop:REAcommutative} that $T^R(1)\in\cC$ is a commutative algebra. Therefore, the identity map $\cF\rightarrow \cF$ is a quantum moment map.
\end{example}

\begin{example}
Consider the quantum Manin pair $(\CoMod_{\U\g}(\Rep G), \Rep G)$ from \cref{sect:classicalManinTriple}. Since $\U\g$ is generated by $\g\subset \U\g$, it is enough to check the quantum moment map condition on $\g$. Using \eqref{eq:Ugbraiding} the isomorphism $\tau_{T^R(1), A}$ is $x\otimes a\mapsto x.a\otimes 1 + a\otimes x$. Then the quantum moment map condition is
\[\mu(x)a = x.a + a\mu(x).\]
In other words, $[\mu(x), a] = x.a$ which is the usual quantum moment map equation.
\end{example}

\begin{defn}
Let $\cM$ be a right $\cD$-module category and $M\in\cM$ an object. The \defterm{internal endomorphism object} $\uEnd_\cD(M)$ is the universal object in $\cD$ equipped with an isomorphism
\[\Hom_\cD(V, \uEnd_\cD(M))\cong \Hom_\cM(M\otimes V, M).\]
natural in $V\in\cD$.
\end{defn}

Note that if the internal endomorphism object $\uEnd_\cD(M)$ exists, it is naturally an algebra in $\cD$.

\begin{example}
Suppose $A\in\cD$ is an algebra, $\cM=\LMod_A(\cD)$ and consider the free $A$-module $M=A$. Then
\[\Hom_\cD(V, A)\cong \Hom_{\LMod_A(\cD)}(A\otimes V, A)\]
for every $V\in\cD$. In particular, $A\cong \uEnd_\cD(A)$.
\end{example}

Recall that there is a natural monoidal functor $\cD\rightarrow \LMod_\cF(\cD)$ sending $M\mapsto \cF\otimes M$. In particular, any $\LMod_\cF(\cD)$-module category is naturally a $\cD$-module category. The main observation about quantum moment maps is the following statement (see also \cite[Proposition 4.2]{BZBJ2}).

\begin{prop}
Suppose $A\in\cD$ is an algebra equipped with a quantum moment map $\mu\colon \cF\rightarrow A$. Then $\LMod_A(\cD)$ is a right $\LMod_\cF(\cD)$-module category. Conversely, suppose $\cM$ is a right $\LMod_\cF(\cD)$-module category and $M\in\cM$ an object admitting a $\cD$-internal endomorphism object $A=\uEnd_\cD(M)$. Then there is a quantum moment map $\mu\colon \cF\rightarrow A$.
\label{prop:momentmapHCmodule}
\end{prop}
\begin{proof}
Suppose $A$ is an algebra equipped with a quantum moment map $\mu\colon \cF\rightarrow A$. Let $M\in\LMod_A(\cD)$ be a left $A$-module. Using the quantum moment map $\mu$ we may turn $M$ into a left $\cF$-module. The quantum moment map equation then implies that the left $A$-module structure and the left $\cF$-module structures commute using $\tau$. Turning the left $\cF$-module structure into a right $\cF$-module structure, we see that $M$ canonically becomes an $(A, \cF)$-bimodule.

Therefore, we may define the action functor $\LMod_A(\cD)\otimes \LMod_\cF(\cD)\rightarrow \LMod_A(\cD)$ by
\[M\boxtimes V\mapsto M\otimes_{\cF} V.\] 

Conversely, suppose we have an action functor
\[\cM\otimes \LMod_\cF(\cD)\longrightarrow \cM\]
and consider an object $M\in\cM$ with $A=\uEnd(M)$. The unit of $\LMod_\cF(\cD)$ is $\cF$. Therefore, the image of $(M, \cF)$ under the functor is $M$. Considering endomorphism objects, this enhances the $(A, A)$-bimodule $A$ to an $(A, A\otimes \cF)$-bimodule, where the right $A$ and $\cF$-actions commute using $\tau$. The right $\cF$-module structure on $A$ induces a map $\mu\colon \cF\rightarrow A$ and the commutation of the right $A$ and $\cF$-module structures is exactly the quantum moment map equation.
\end{proof}

\begin{remark}
Note that the moment map condition is missing in the statement of \cite[Proposition 4.2]{BZBJ2}. In the notation of that reference, suppose $b$ is an algebra in $\cB$ equipped with an algebra map from $F^R(1_{\cB})$. The quantum moment map condition is precisely the condition that the induced left and right actions of $F^R(1_{\cB})$ are compatible, so $b$ descends from an algebra in $\cB$ to an algebra in $F^R(1_{\cB})$-modules in $\cB$. We are grateful to the authors for their correspondence confirming the correction.
\end{remark}

We will now define quantum Hamiltonian reduction in our context. Suppose $\mu\colon \cF\rightarrow A$ is a quantum moment map and let $\epsilon\colon \cF\rightarrow 1$ be the quantum moment map \eqref{eq:epsilon}. Then $A\otimes_\cF 1$ is naturally a left $A$-module. Consider the $k$-algebra
\[\Hom_{\LMod_A(\cD)}(A\otimes_\cF 1, A\otimes_\cF 1).\]

\begin{prop}
There is a natural isomorphism
\[\Hom_{\LMod_A(\cD)}(A\otimes_\cF 1, A\otimes_\cF 1)\cong \Hom_{\cD}(1, A\otimes_\cF 1).\]
\end{prop}
\begin{proof}
Note that the quantum moment map equation implies that $A$ is an algebra in
\[\LMod_\cF(\cD)\cong \RMod_\cF(\cD).\]

First, using the induction-restriction adjunction along the map $\epsilon\colon \cF\rightarrow 1$ we get an isomorphism
\[\Hom_{\LMod_A(\cD)}(A\otimes_\cF 1, A\otimes_\cF, 1)\cong \Hom_{\LMod_A(\RMod_\cF(\cD))}(A, A\otimes_\cF 1),\]
where we view $A\otimes_\cF 1$ as a right $\cF$-module via $\epsilon\colon \cF\rightarrow 1$. But the natural left $\cF$-module structure on $A\otimes_\cF 1$ is also via $\epsilon$. Therefore, we get an isomorphism
\[\Hom_{\LMod_A(\RMod_\cF(\cD))}(A, A\otimes_\cF 1)\cong \Hom_{\LMod_A(\cD)}(A, A\otimes_\cF 1).\]
Finally, using the induction-restriction adjunction along the unit map $1\rightarrow A$ we get an isomorphism
\[\Hom_{\LMod_A(\cD)}(A, A\otimes_\cF 1)\cong \Hom_{\cD}(1, A\otimes_\cF 1).\]
\end{proof}

\begin{defn}
Let $\mu\colon \cF\rightarrow A$ be a quantum moment map. The \defterm{Hamiltonian reduction of $A$} is the $k$-algebra
\[\Hom_{\cD}(1, A\otimes_\cF 1).\]
\label{def:Hamiltonianreduction}
\end{defn}

\begin{remark}
Using \cref{prop:momentmapHCmodule} we have the following interpretation of Hamiltonian reduction. Assume for simplicity that the conditions of \cref{prop:HCmonadic} are satisfied, so that we have a monoidal equivalence $\HC\cong\LMod_\cF(\cD)$. Recall from \cref{ex:trivialHCmodule} that $\cD$ is naturally a left $\HC$-module category. Then we have a pointed category
\[\LMod_A(\cD)\otimes_\HC \cD\]
where the pointing is given by $A\boxtimes 1$. It is easy to see that the endomorphism algebra of the pointing is exactly the Hamiltonian reduction of $A$, see e.g. \cite[Theorem 5.4]{BZBJ2}.
\end{remark}

\begin{example}
Let $X$ be a smooth affine algebraic variety with a free action of a connected reductive algebraic group $G$. Consider the algebra of differential operators $A=\D(X)\in\Rep(G)$. The infinitesimal action of $\g$ defines a map of Lie algebras $\g\rightarrow \Vect(X)\subset \D(X)$ which extends to a moment map $\mu\colon \U\g\rightarrow \D(X)$. The Hamiltonian reduction
\[\Hom_{\Rep(G)} (k, \D(X)\otimes_{\U\g} k)\cong (\D(X)/I)^G,\]
where $I\subset \D(X)$ is the left ideal generated by $\mu(v)$ for $v\in\g$, is isomorphic to $\D(X/G)$. We refer to \cite[Section 4]{EtingofCM} for more details on quantum Hamiltonian reduction.
\end{example}

\subsection{Monadic case}
\label{sect:quantummomentmapmonadic}

Suppose the conditions of \cref{prop:HCmonadic} are satisfied, so that we have a $\cC$-monoidal equivalence $\cD\cong \RMod_{T^R(1)}(\cC)$.

\begin{prop}
Suppose $\mu\colon \cF\rightarrow A$ is an algebra map in $\cD$ and let $\mu'\colon T^R(1)\rightarrow T^R(A)$ be the adjoint algebra map in $\cC$. Then $\mu$ is a quantum moment map iff $\mu'\colon T^R(1)\rightarrow T^R(A)$ is central.
\label{prop:momentmapcentral}
\end{prop}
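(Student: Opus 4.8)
The plan is to reduce the statement to an identity in the braided category $\cC$ by exploiting that, under the standing hypotheses, $T^R$ is realized as a faithful functor. Concretely, the $\cC$-monoidal equivalence $\cD\cong \RMod_{T^R(1)}(\cC)$ identifies $T^R$ with the forgetful functor $\RMod_{T^R(1)}(\cC)\rightarrow \cC$, which is faithful; hence an equality of two parallel morphisms in $\cD$ may be checked after applying $T^R$. I will apply this to the two composites
\[f = m\circ(\mu\otimes\id_A),\qquad g = m\circ(\id_A\otimes\mu)\circ\tau_{T^R(1),A}\]
whose equality is the quantum moment map condition of \cref{def:quantummomentmap}. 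Here centrality of $\mu'$ means, as usual in a braided category, that
\[m_{T^R(A)}\circ(\mu'\otimes\id) = m_{T^R(A)}\circ(\id\otimes\mu')\circ\sigma_{T^R(1),T^R(A)},\]
where $m_{T^R(A)}$ is the algebra structure on $T^R(A)$ coming from the lax monoidal structure of $T^R$ and $\sigma$ is the braiding of $\cC$.

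Since $T$ satisfies the projection formula, the natural map $p\colon T^R(1)\otimes T^R(A)\xrightarrow{\sim} T^R(TT^R(1)\otimes A) = T^R(\cF\otimes A)$ is an isomorphism, built from the unit $\eta$ of the adjunction $T\dashv T^R$ and the lax structure map of $T^R$. The first key computation is that, under $p$, applying $T^R$ to $\mu\otimes\id_A$ recovers $\mu'\otimes\id$: using naturality of the lax structure map together with the description $\mu' = T^R(\mu)\circ\eta_{T^R(1)}$ of the adjoint, one checks that $T^R(\mu\otimes\id_A)\circ p$ equals $\mathrm{lax}\circ(\mu'\otimes\id_{T^R(A)})$. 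Composing with $T^R(m)$ and recognizing $T^R(m)\circ\mathrm{lax}$ as $m_{T^R(A)}$ then yields $T^R(f)\circ p = m_{T^R(A)}\circ(\mu'\otimes\id)$. A mirror-image computation, using the projection formula isomorphism $p'\colon T^R(A)\otimes T^R(1)\xrightarrow{\sim} T^R(A\otimes\cF)$, gives $T^R(\id_A\otimes\mu)\circ p' = \mathrm{lax}\circ(\id\otimes\mu')$.

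The second key input is \cref{prop:rightadjointlaxmonoidal}, which identifies the field goal isomorphism with the braiding: specializing its commuting diagram to $X = T^R(1)$ and $V = A$ gives exactly $T^R(\tau_{T^R(1),A})\circ p = p'\circ\sigma_{T^R(1),T^R(A)}$. Chaining this with the mirror computation above produces $T^R(g)\circ p = m_{T^R(A)}\circ(\id\otimes\mu')\circ\sigma_{T^R(1),T^R(A)}$. Since $p$ is an isomorphism and $T^R$ is faithful, $f = g$ if and only if $T^R(f)\circ p = T^R(g)\circ p$, that is, if and only if $m_{T^R(A)}\circ(\mu'\otimes\id) = m_{T^R(A)}\circ(\id\otimes\mu')\circ\sigma$, which is precisely the centrality of $\mu'$.

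I expect the main obstacle to be the two naturality chases relating $T^R(\mu\otimes\id_A)$ and $T^R(\id_A\otimes\mu)$ to $\mu'\otimes\id$ and $\id\otimes\mu'$: these require unwinding the construction of the projection formula isomorphism from $\eta$ and the lax structure map and then applying naturality of the lax structure map, all while tracking the coherence isomorphisms that make $T^R(m)\circ\mathrm{lax}$ the genuine multiplication on $T^R(A)$. By contrast, the replacement of $\tau$ by $\sigma$ is immediate from \cref{prop:rightadjointlaxmonoidal} once its diagram is specialized correctly, and the faithfulness step is purely formal.
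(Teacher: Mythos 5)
Your proposal is correct and follows essentially the same route as the paper: apply the faithful functor $T^R$ (faithful by monadicity), transport the moment map diagram along the projection formula isomorphisms, and use \cref{prop:rightadjointlaxmonoidal} to replace $T^R(\tau_{T^R(1),A})$ by the braiding $\sigma$, so that the resulting identity is exactly centrality of $\mu'$. The paper packages the two equational computations you describe into a single commuting diagram (the pentagon plus the left-hand square), but the ingredients and their roles are identical.
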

\begin{proof}
By assumption $T^R$ is faithful, so the diagram in \cref{def:quantummomentmap} commutes in $\cD$ iff the pentagon in the diagram
\[
\xymatrix{
T^R(1)\otimes T^R(A) \ar[r] \ar^{\sigma}[dd] & T^R(TT^R(1)\otimes A) \ar^-{\mu\otimes\id}[r] \ar^{T^R(\tau_{T^R(1), A})}[dd] & T^R(A \otimes A) \ar^{T^R(m)}[dr] & \\
&&& T^R(A) \\
T^R(A)\otimes T^R(1) \ar[r] & T^R(A\otimes TT^R(1)) \ar^-{\id\otimes \mu}[r] & T^R(A\otimes A) \ar_{T^R(m)}[ur] &
}
\]
commutes in $\cC$. $T^R\colon \cD\rightarrow \cC$ is compatible with the $\cC$-monoidal structure, so the square on the left commutes as well. But the commutativity of the resulting diagram precisely expresses the condition that the map $T^R(1)\rightarrow T^R(A)$ is central.
\end{proof}

Let $A$ be any algebra in $\cD=\RMod_{T^R(1)}(\cC)$. Then we have an equivalence
\[\LMod_A(\cD)\cong \BiMod{A}{T^R(1)}(\cC).\]
In particular, it carries a natural right $\cD$-module structure.

Now suppose $A$ is equipped with a quantum moment map, i.e. a central map $\mu'\colon T^R(1)\rightarrow A$ in $\cC$. Let us explain how \cref{prop:momentmapHCmodule} works in this case, i.e. how to enhance the right $\cD$-module structure to an $\HC$-module structure. Recall from \cref{prop:HCmodule} that it means we need to provide an additional left $\cD$-module structure such that the resulting $\cC$-module structures are identified.

Given an $(A, T^R(1))$-bimodule $M$, using $\mu'\colon T^R(1)\rightarrow A$ we may turn it into a $(T^R(1), T^R(1))$-bimodule. Given another right $T^R(1)$-module $V$, the relative tensor product $V\otimes_{T^R(1)} M$ is still an $(A, T^R(1))$-bimodule since $T^R(1)\rightarrow A$ is central. This gives the required left $\cD$-action on $\LMod_A(\cD)$. The functor $T\colon \cC\rightarrow \cD$ is given by the free right $T^R(1)$-module construction, so the two $\cC$-actions are identified using the braiding.

\subsection{Coalgebras}

Consider a triple of Hopf algebras $(D, H, H^\vee)$ and the corresponding quantum Manin triple $(\CoMod_D, \CoMod_H, \CoMod_{(H^\vee)^{op}})$ from \cref{sect:coalgebras}. We denote by $\cF=T^R(1)$ the $D$-comodule algebra we have considered previously.

\begin{thm}
An algebra map $\mu\colon \cF\rightarrow A$ in $\CoMod_H$ is a quantum moment map iff
\[\mu(h) a = (h_{(1)}\triangleright a)\mu(h_{(0)})\]
for all $h\in H$ and $a\in A$, where $\Delta(h) = h_{(0)}\otimes h_{(1)}\in \cF\otimes H^\vee$ is the $H^\vee$-coaction on $\cF$.
\label{thm:quantummomentmapManintriple}
\end{thm}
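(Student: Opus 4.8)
The plan is to unpack \cref{def:quantummomentmap} in $\cD=\CoMod_H$ and to match the resulting identity with the stated formula. A morphism in $\CoMod_H$ is a $k$-linear comodule map, hence determined by its values on elements, so the square in \cref{def:quantummomentmap} commutes if and only if the two composites $\cF\otimes A\rightarrow A$ agree on every $h\otimes a\in\cF\otimes A$. The upper composite $m\circ(\mu\otimes\id)$ sends $h\otimes a$ to $\mu(h)a$, so the whole statement reduces to evaluating the lower composite $m\circ(\id\otimes\mu)\circ\tau_{T^R(1),A}$ on $h\otimes a$.

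First I would make the half-braiding explicit. The $\cC=\CoMod_D$-monoidal structure on $\cD=\CoMod_H$ is the one built from the $D$-coquasitriangular structure $\br_H\colon D\otimes H\rightarrow k$, so, exactly as for the braiding on $\CoMod_D$, the half-braiding is
\[\tau_{X,V}(x\otimes v)=\br_H(x_{(1)},v_{(1)})\,v_{(0)}\otimes x_{(0)},\]
where $x\mapsto x_{(0)}\otimes x_{(1)}$ is the $D$-coaction on $X\in\CoMod_D$ (so $x_{(1)}\in D$) and $v\mapsto v_{(0)}\otimes v_{(1)}$ is the $H$-coaction on $V\in\CoMod_H$. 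Writing the $D$-coaction on $\cF=T^R(1)$ as $h\mapsto h_{[0]}\otimes h_{[1]}$ with $h_{[1]}\in D$ and taking $X=\cF$, $V=A$, the lower composite evaluates to $\br_H(h_{[1]},a_{(1)})\,a_{(0)}\,\mu(h_{[0]})$.

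Next I would rewrite this using the data of \cref{sect:coalgebras}. By construction the $H^\vee$-coaction on $\cF$ is the pushforward of its $D$-coaction along the Hopf map $g\colon D\rightarrow H^\vee$ (it is the $(H^\vee)^{op}$-comodule $\tilde T(\cF)$ entering the Manin triple), so $\Delta(h)=h_{(0)}\otimes h_{(1)}$ with $h_{(0)}=h_{[0]}$ and $h_{(1)}=g(h_{[1]})$. Combined with the compatibility $\br_H(d,h)=\ev(g(d),h)$, the lower composite becomes
\[\ev(g(h_{[1]}),a_{(1)})\,a_{(0)}\,\mu(h_{[0]})=\ev(h_{(1)},a_{(1)})\,a_{(0)}\,\mu(h_{(0)}).\]
Finally, recognizing $a_{(0)}\,\ev(h_{(1)},a_{(1)})=h_{(1)}\triangleright a$ by \eqref{eq:comoduletomodule} identifies this with $(h_{(1)}\triangleright a)\mu(h_{(0)})$. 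Thus the moment map square commutes precisely when $\mu(h)a=(h_{(1)}\triangleright a)\mu(h_{(0)})$ for all $h,a$, and since every step is an equality both implications follow simultaneously.

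The step I expect to be the main obstacle is justifying the explicit formula for $\tau_{T^R(1),A}$: the excerpt records the braiding on $\CoMod_D$ and the distributive law $\beta$ but not the half-braiding $\tau$ itself. I would obtain it either directly from the $D$-coquasitriangular structure or by identifying $\tau_{X,V}$ with $\beta_{\tilde T(X),V}$, using that $\tilde T$ is corestriction along $g$ together with $\br_H(d,h)=\ev(g(d),h)$. Along the way I would also track the opposite multiplication in $\cE=\CoMod_{(H^\vee)^{op}}$ to confirm that it does not reverse the coaction on $\cF$ relevant here, so that the ordering of the arguments of $\ev$ and the placement of $\mu(h_{(0)})$ come out as stated.
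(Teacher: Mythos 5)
Your proposal is correct and follows essentially the same route as the paper's proof: unpack the half-braiding $\tau_{T^R(1),A}$ via the $D$-coquasitriangular structure $\br_H$, rewrite it through the compatibility $\br_H(d,h)=\ev(g(d),h)$ with the $H^\vee$-coaction $h_{(0)}\otimes g(h_{(1)}')$, and identify the result with $(h_{(1)}\triangleright a)\mu(h_{(0)})$ using \eqref{eq:comoduletomodule}. The only difference is that you explicitly justify the elementwise formula for $\tau$, which the paper takes as read when it writes the moment map condition as $\mu(h)a = \br_A(h_{(1)}',a_{(1)})a_{(0)}\mu(h_{(0)})$.
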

\begin{proof}
Let $g\colon D\rightarrow H^\vee$ be the Hopf map in the definition of the triple $(D, H, H^\vee)$ and
\[\Delta(h) = h_{(0)}\otimes h_{(1)}'\in \cF\otimes D\]
be the $D$-coaction. Then $h_{(0)}\otimes h_{(1)} = h_{(0)}\otimes g(h_{(1)}')$.

The quantum moment map condition is
\[\mu(h) a = \br_A(h_{(1)}', a_{(1)}) a_{(0)}\mu(h_{(0)}).\]
The compatibility of $\br_A$ and the evaluation skew-pairing $\ev\colon H^\vee\otimes H\rightarrow k$ gives
\[\br_A(h_{(1)}', a_{(1)}) a_{(0)}\mu(h_{(0)}) = \ev(h_{(1)}, a_{(1)}) a_{(0)} \mu(h_{(0)})\]
and the definition of the $H^\vee$-action \eqref{eq:comoduletomodule} gives
\[\ev(h_{(1)}, a_{(1)}) a_{(0)} \mu(h_{(0)}) = (h_{(1)}\triangleright a) \mu(h_{(0)}).\]
\end{proof}

\begin{remark}
The previous proposition relates our notion of quantum moment maps to a more common one used in the literature, see e.g. \cite[Section 1.5]{VaragnoloVasserot}. In particular, the quantum Hamiltonian reduction defined in \cite[Theorem 1.5.2]{VaragnoloVasserot} coincides with \cref{def:Hamiltonianreduction}.
\end{remark}

\subsection{Fusion}

Fix a quantum Manin pair $(\cC, \cD)$ and suppose $\cM_1, \cM_2$ are two $\HC$-module categories. In particular, both are $(\cD, \cD)$-bimodule categories, so we may consider the relative tensor product $\cM_1\otimes_\cD \cM_2$. It is still an $(\cD, \cD)$-bimodule and it is clear that the resulting $\cC$-actions are identified. Thus, it becomes an $\HC$-bimodule.

\begin{defn}
Let $\cM_1, \cM_2$ be $\HC$-modules. Their \defterm{fusion} is the $\HC$-module
\[\cM_1\otimes_\cD \cM_2.\]
\end{defn}

\begin{remark}
Fusion gives a monoidal structure on the 2-category $\LMod_\HC$ of $\HC$-module categories. Consider the quantum Manin pair $(\cC\otimes \cC^{\bop}, \cC)$ for a braided monoidal category $\cC$. As explained in \cite[Section 3]{BZBJ2}, an $\HC$-module category is a braided module category, i.e. an $\bE_2$-module category over $\cC$. Then by \cite[Theorem 3.3.3.9]{HA} fusion may be upgraded to a braided monoidal structure on the 2-category $\LMod_\HC$.
\end{remark}

From now on we assume that the conditions of \cref{prop:HCmonadic} are satisfied, so that we may identify $\cD\cong \RMod_{T^R(1)}(\cC)$.

Suppose $A_1, A_2$ are two algebras in $\cD=\RMod_{T^R(1)}(\cC)$ equipped with central maps $\mu'_i\colon T^R(1)\rightarrow T^R(A_i)$. We have a natural algebra structure on $T^R(A_1)\otimes T^R(A_2)$ in $\cC$. Also, $T^R(A_1)$ and $T^R(A_2)$ are algebras in right $T^R(1)$-modules. Using $\mu'_2$ we induce a left $T^R(1)$-module structure on $T^R(A_2)$ so that by centrality it becomes an algebra with respect to the left $T^R(1)$-module structure as well. Therefore,
\[T^R(A_{fus}) = T^R(A_1)\otimes_{T^R(1)} T^R(A_2)\]
is an algebra in right $T^R(1)$-modules in $\cC$, i.e. an algebra in $\cD$. The map
\[\mu'_{fus}\colon T^R(1)\rightarrow T^R(A_1)\otimes_{T^R(1)} T^R(A_2)\]
given by $t\mapsto \mu_1'(t)\otimes 1$ is still central, so it defines a moment map $\mu_{fus}\colon \cF\rightarrow A_{fus}$.

\begin{prop}
Suppose $A_1, A_2$ are two algebras in $\cD$ equipped with quantum moment maps $\mu_i\colon \cF\rightarrow A$. Then the fusion of $\LMod_{A_1}(\cD)$ and $\LMod_{A_2}(\cD)$ is equivalent to the category $\LMod_{A_{fus}}(\cD)$. Moreover, this equivalence respects the natural pointings on both sides.
\end{prop}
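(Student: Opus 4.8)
The plan is to work throughout in the monadic setting $\cD\cong\RMod_{T^R(1)}(\cC)$ assumed throughout this subsection. Under the identification $\LMod_{A_i}(\cD)\cong\BiMod{A_i}{T^R(1)}(\cC)$ from \cref{sect:quantummomentmapmonadic}, the left $\cD$-module structure making $\LMod_{A_i}(\cD)$ an $\HC$-module category is the moment-map-twisted one, sending $(V,M)$ to $V\otimes_{T^R(1)}M$ with $T^R(1)$ acting on $M$ through $\mu'_i$. The strategy is to compute the relative tensor product defining the fusion by a single application of \cref{prop:modtensorproduct} and then to recognize the answer as $\LMod_{A_{fus}}(\cD)$.

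First I would apply \cref{prop:modtensorproduct} with $A=A_1$ and $\cM=\LMod_{A_2}(\cD)$, the latter regarded as a left $\cD$-module category via the structure coming from $\mu_2$. The right $\cD$-action on $\LMod_{A_1}(\cD)$ entering the fusion is the standard one by right multiplication: it is the restriction of the $\HC$-action of \cref{prop:momentmapHCmodule} along the monoidal functor $\cD\to\HC$, $V\mapsto 1\boxtimes V$ of \cref{prop:HCmonadic}, and this restriction sends $M\boxtimes V$ to $M\otimes V$. Hence \cref{prop:modtensorproduct} yields a natural equivalence
\[\LMod_{A_1}(\cD)\otimes_\cD\LMod_{A_2}(\cD)\cong\LMod_{A_1}\!\left(\LMod_{A_2}(\cD)\right),\]
the category of $A_1$-module objects internal to $\LMod_{A_2}(\cD)$.

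Next I would unpack the right-hand side. An object is a right $T^R(1)$-module $M$ equipped with a left $A_2$-action together with a left $A_1$-action in which $A_1\in\cD$ acts through the moment-map $\cD$-module structure, i.e. via the relative tensor $A_1\otimes_{T^R(1)}(-)$ with $T^R(1)$ acting on $M$ through $\mu'_2\colon T^R(1)\to A_2$ and the left $A_2$-action. This is exactly the data of a module over $A_{fus}$: by definition $A_{fus}=T^R(A_1)\otimes_{T^R(1)}T^R(A_2)$, and the same use of the centrality of $\mu'_2$ that turns this relative tensor into an algebra identifies a compatible pair of left $A_1$- and $A_2$-actions glued over $T^R(1)$ with a single left $A_{fus}$-action. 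Together with the previous step this gives $\LMod_{A_1}(\cD)\otimes_\cD\LMod_{A_2}(\cD)\cong\LMod_{A_{fus}}(\cD)$. For the pointings it then suffices to track $A_1\boxtimes A_2$: under \cref{prop:modtensorproduct} it goes to the action of $A_1\in\cD$ on $A_2\in\LMod_{A_2}(\cD)$, namely $A_1\otimes_{T^R(1)}A_2=A_{fus}$ viewed as the free rank-one $A_{fus}$-module, which is precisely the pointing of $\LMod_{A_{fus}}(\cD)$.

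The main obstacle will be the bookkeeping in this unpacking: one must keep straight the two $\cD$-module structures on each $\LMod_{A_i}(\cD)$---the standard one by multiplication and the moment-map-twisted one of \cref{sect:quantummomentmapmonadic}---transport them correctly into the hypotheses of \cref{prop:modtensorproduct}, whose left/right conventions trace back to \cref{prop:HCmodule,prop:momentmapHCmodule}, and check that the field goal isomorphism together with the centrality of $\mu'_2$ reproduce exactly the algebra structure on $A_{fus}$ built above rather than some other twist of it.
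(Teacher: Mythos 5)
Your proposal is correct and follows essentially the same route as the paper: both apply \cref{prop:modtensorproduct} with the $\mu_2'$-twisted left $\cD$-module structure on $\LMod_{A_2}(\cD)$ to obtain $\LMod_{A_1}(\LMod_{A_2}(\cD))$, and then identify this with $\LMod_{A_{fus}}(\cD)$ --- the paper by observing both categories are monadic over $\cD$ with monad $A_1\otimes_{T^R(1)}A_2\otimes_{T^R(1)}(-)$, you by unpacking objects directly, which is the same computation in different packaging. Your explicit tracking of the pointing $A_1\boxtimes A_2\mapsto A_{fus}$ is a welcome addition that the paper's proof leaves implicit.
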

\begin{proof}
By \cref{prop:modtensorproduct} the functor
\[\LMod_{A_1}(\cD)\otimes_{\cD} \LMod_{A_2}(\cD)\longrightarrow \LMod_{A_1}(\LMod_{A_2}(\cD))\]
is an equivalence, where the left $\cD$-module structure on $\LMod_{A_2}(\cD)$ is given in terms of $\mu_2'$ as described in \cref{sect:quantummomentmapmonadic}. Both categories $\LMod_{A_1}(\LMod_{A_2}(\cD))$ and $\LMod_{A_1\otimes_{T^R(1)} A_2}(\cD)$ are monadic over $\cD$ and the corresponding monads are given by $A_1\otimes_{T^R(1)} A_2\otimes_{T^R(1)}(-)$
\end{proof}

\begin{example}
Consider the quantum Manin pair $(\CoMod_{\U\g}(\Rep G), \Rep G)$ from \cref{sect:classicalManinTriple}. Recall that for $A\in\Rep G$ the functor $T^R$ is given by $T^R(A) = A\otimes \U\g$, the cofree right $\U\g$-comodule. $T^R(A)$ carries a natural $(T^R(1)=\U\g)$-module structure given by the right $\U\g$-action on itself. If $\mu\colon \U\g\rightarrow A$ is a quantum moment map in $\Rep G$, then its adjoint is
\[\mu'\colon \U\g\rightarrow A\otimes \U\g\]
given by $h\mapsto \mu(h_{(1)})\otimes h_{(2)}$.

Now suppose $A_1, A_2\in\Rep G$ are two algebras and $\mu_i\colon \U\g\rightarrow A_i$ are two quantum moment maps. Then
\[T^R(A_{fus}) = T^R(A_1)\otimes_{T^R(1)} T^R(A_2)\]
as an object of $\cC$ equipped with the right $T^R(1)$-module structure from the right $T^R(1)$-action on $T^R(A_2)$. In our case we get
\[T^R(A_{fus}) = (A_1\otimes \U\g)\otimes_{\U\g}(A_2\otimes \U\g)\cong A_1\otimes A_2\otimes \U\g,\]
i.e. $A_{fus}\cong A_1\otimes A_2\in\Rep G$. The algebra structure on $T^R(A_{fus})$ is uniquely determined by the condition that $T^R(A_1)\otimes T^R(A_2)\rightarrow T^R(A_{fus})$ is an algebra map and a computation shows that the algebra structure on $A_{fus}\cong A_1\otimes A_2$ is the pointwise product.

For $h\in\U\g$ we have
\[\mu'_{fus}(h) = (\mu_1(h_{(1)}) \otimes h_{(2)})\otimes (1\otimes 1)\in (A_1\otimes \U\g)\otimes_{\U\g}(A_2\otimes \U\g).\]
Under the isomorphism with $A_1\otimes A_2\otimes \U\g$ it corresponds to
\[\mu'_{fus}(h) = \mu_1(h_{(1)})\otimes \mu_2(h_{(2)})\otimes h_{(3)}\in A_1\otimes A_2\otimes \U\g\]
and hence the moment map $\mu_{fus}\colon \U\g\rightarrow A_1\otimes A_2$ is given by $h\mapsto \mu_1(h_{(1)})\otimes \mu_2(h_{(2)})$.
\end{example}

\begin{example}
Suppose $\cD$ is a braided monoidal category and consider the quantum Manin pair $(\cD\otimes \cD^{\bop}, \cD)$. Consider an algebra $A\in\cD$ equipped with a central map $\mu'\colon T^R(1)\rightarrow T^R(A)$. Denote by $u\colon T^R(1)\rightarrow T^R(A)$ the image of the unit map under $T^R(A)$. An $\HC$-module category is a $(\cD, \cD)$-bimodule category with two identifications of the $\cD$-module structure. Then we get an equivalence of the $(T^R(1), T^R(1))$-bimodules
\[{}_{\mu'}T^R(A)_u\cong {}_u T^R(A)_u.\]
So, we may identify the fusion as
\[T^R(A_{fus}) = {}_{\mu'_1} T^R(A_1)_u \otimes_{T^R(1)} {}_{\mu'_2} T^R(A_2)_u\cong {}_{\mu'_1} T^R(A_1)_u \otimes_{T^R(1)} {}_u T^R(A_2)_u\cong T^R(A_1\otimes A_2),\]
i.e. $A_{fus}$ is equivalent to the usual tensor product $A_1\otimes A_2$ of algebras in $\cD$.
\end{example}

\section{Classical moment maps}
\label{sect:classicalsetting}

In this section we define moment maps in the classical setting and explain how they appear as classical degenerations of quantum moment maps.

\subsection{Quasi-Poisson geometry}

Let $G$ be an algebraic group.

\begin{defn}
A \defterm{quasi-Poisson structure} on $G$ is a bivector $\pi\in\wedge^2 \T_G$ and a trivector $\phi\in\wedge^3(\g)$ such that
\begin{enumerate}
\item $\pi$ is multiplicative.

\item $\frac{1}{2}[\pi, \pi] = \phi^L - \phi^R$.

\item $[\pi, \phi^R] = 0$.
\end{enumerate}
\end{defn}

Since $\pi$ is multiplicative, it vanishes at the unit $e\in G$. In particular, its linear part at the unit defines a Lie cobracket $\delta\in \g^*\otimes \wedge^2(\g)$. We denote by $\llbracket-, -\rrbracket$ the natural Lie bracket on $\wedge^\bullet(\g^*)\otimes\wedge^\bullet(\g)$ given by pairing the $\g^*$ and $\g$ factors.

\begin{defn}
Let $t\in\wedge^2(\g)$ and suppose $G$ is a quasi-Poisson group. The \defterm{twist} of $G$ is a new quasi-Poisson structure on $G$ given by
\begin{align*}
\pi' &= \pi + t^L - t^R \\
\phi' &= \phi + \frac{1}{2}\llbracket\delta, t\rrbracket + \frac{1}{2}[t, t].
\end{align*}
\end{defn}

\begin{defn}
Let $G$ be a quasi-Poisson group. A \defterm{quasi-Poisson $G$-variety} is a $G$-variety $X$ equipped with a bivector $\pi_X$ such that
\begin{enumerate}
\item The action map $a\colon G\times X\rightarrow X$ is compatible with the bivectors.

\item $\frac{1}{2}[\pi_X, \pi_X] = a(\phi)$.
\end{enumerate}
\end{defn}

\begin{remark}
If $\phi = 0$, we say $G$ is a Poisson-Lie group. A quasi-Poisson $G$-variety in this case is a Poisson $G$-variety.
\end{remark}

\begin{lm}
Suppose $G$ is a quasi-Poisson group and $(X, \pi_X)$ is a quasi-Poisson $G$-variety and fix $t\in\wedge^2(\g)$. Let $G'$ be the twist of $G$ with respect to $t$. Then $(X, \pi_X - a(t))$ is a quasi-Poisson $G'$-variety.
\end{lm}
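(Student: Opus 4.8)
The plan is to verify directly the two defining conditions of a quasi-Poisson $G'$-variety for the pair $(X, \pi_X - a(t))$, writing $\pi_X' = \pi_X - a(t)$ and recalling that $G'$ has the same underlying group as $G$, so the $G$-action on $X$ is unchanged while the quasi-Poisson data become $(\pi', \phi')$.

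I expect the curvature condition $\frac{1}{2}[\pi_X', \pi_X'] = a(\phi')$ to be the heart of the argument. Expanding the Schouten--Nijenhuis bracket bilinearly and using that the bracket of two bivectors is symmetric gives $\frac{1}{2}[\pi_X',\pi_X'] = \frac{1}{2}[\pi_X,\pi_X] - [\pi_X, a(t)] + \frac{1}{2}[a(t), a(t)]$. The first term is $a(\phi)$ by hypothesis. For the last term I would use that the infinitesimal action $\g\rightarrow \T_X$ is a morphism of Lie algebras, hence extends to a morphism of Gerstenhaber algebras $a$ intertwining the algebraic Schouten bracket on $\wedge^\bullet\g$ with the Schouten--Nijenhuis bracket; thus $[a(t),a(t)] = a([t,t])$, producing the $\frac{1}{2}a([t,t])$ summand of $a(\phi')$. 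The remaining term $[\pi_X, a(t)]$ is computed from the infinitesimal form of condition (1) for the untwisted structure, namely the cocycle identity $[a(\xi), \pi_X] = a(\delta\xi)$ for $\xi\in\g$, where $\delta$ is the linearization of $\pi$ at $e$. Writing $t = \sum_i \xi_i\wedge\eta_i$ and applying the Leibniz rule, $[\pi_X, a(\xi_i)\wedge a(\eta_i)] = [\pi_X, a(\xi_i)]\wedge a(\eta_i) - a(\xi_i)\wedge[\pi_X, a(\eta_i)] = -a(\delta\xi_i)\wedge a(\eta_i) + a(\xi_i)\wedge a(\delta\eta_i)$, using $[\pi_X, a(\xi)] = -a(\delta\xi)$. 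Summing and recognizing the result as $[\pi_X, a(t)] = -\tfrac12 a(\llbracket\delta, t\rrbracket)$, I obtain $\frac{1}{2}[\pi_X',\pi_X'] = a(\phi) + \tfrac12 a(\llbracket\delta,t\rrbracket) + \tfrac12 a([t,t]) = a(\phi')$.

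For the compatibility condition (1) I would argue globally via the pointwise pushforward form $(\pi_X)_{gx} = (r_x)_*\pi|_g + (l_g)_*\pi_X|_x$, where $r_x\colon G\rightarrow X$ is the orbit map $g\mapsto gx$ and $l_g\colon X\rightarrow X$ is the action diffeomorphism $x\mapsto gx$. Two elementary identities for fundamental multivector fields do the job: since $r_x\circ r_g = r_{gx}$ one gets $(r_x)_*(t^R)|_g = a(t)_{gx}$, and since $r_x\circ l_g = l_g\circ r_x$ together with $(dl_g)_x a(\xi)_x = a(\mathrm{Ad}_g\xi)_{gx}$ one gets $(r_x)_*(t^L)|_g = (dl_g)_x a(t)_x = (l_g)_* a(t)|_{gx}$. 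Subtracting yields $(r_x)_*(t^L - t^R)|_g - (l_g)_* a(t)|_x = -a(t)_{gx}$, so adding the untwisted compatibility gives exactly $(\pi_X')_{gx} = (r_x)_*\pi'|_g + (l_g)_*\pi_X'|_x$. This route needs no connectedness hypothesis.

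The main obstacle is the sign and normalization bookkeeping in the $[\pi_X, a(t)]$ computation: one must fix the left/right-action convention that makes $a$ a genuine, rather than sign-twisted, morphism of Schouten brackets, and check that the factor $\tfrac12$ in $\phi' = \phi + \tfrac12\llbracket\delta,t\rrbracket + \tfrac12[t,t]$ is precisely the one produced when $\llbracket\delta, t\rrbracket$ sums the two contractions of the single $\g^*$-leg of $\delta$ against the two $\g$-legs of $t$. Once these conventions are fixed consistently with the definitions of $\llbracket-,-\rrbracket$ and of $\delta$, every factor matches and both conditions follow.
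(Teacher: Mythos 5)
The paper does not actually prove this lemma: it is stated without proof, as a standard fact from the quasi-Poisson literature (it is essentially contained in \cite{AKS}), so there is no in-paper argument to compare against, and your proposal supplies the verification the paper omits. Your argument is correct. The decomposition $\tfrac12[\pi_X',\pi_X'] = \tfrac12[\pi_X,\pi_X] - [\pi_X,a(t)] + \tfrac12[a(t),a(t)]$ (using that the Schouten bracket of two bivectors is symmetric), the identity $[a(t),a(t)] = a([t,t])$, and the cocycle identity $[\pi_X, a(\xi)] = -a(\delta\xi)$ are all sound; the last does follow by linearizing the pointwise form of condition (1) at $g=e$, where the multiplicativity of $\pi$ (hence $\pi|_e = 0$) makes the linear part $\delta$ intrinsically defined, so only the derivative of the $(r_x)_*\pi$ term survives. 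The pointwise pushforward verification of condition (1) via $r_x\circ R_g = r_{gx}$ and $r_x\circ L_g = l_g\circ r_x$ is likewise correct, and your observation that this global route needs no connectedness hypothesis (which the purely infinitesimal route would) is a genuine plus.

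The one step that is not fully pinned down --- and which you flag yourself --- is the coefficient in $[\pi_X, a(t)] = -\tfrac12 a(\llbracket\delta,t\rrbracket)$. For $t=\sum_i\xi_i\wedge\eta_i$ your Leibniz computation produces
\[
[\pi_X, a(t)] = -a\Bigl(\textstyle\sum_i\bigl(\delta\xi_i\wedge\eta_i - \xi_i\wedge\delta\eta_i\bigr)\Bigr),
\]
and this sum already contracts the single $\g^*$-leg of $\delta$ against \emph{both} legs of $t$; so your claimed identity holds precisely when $\llbracket\delta,t\rrbracket$ is normalized as twice this sum, whereas under the usual big-bracket normalization the coefficient would be $1$ rather than $\tfrac12$. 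The paper's definition of $\llbracket-,-\rrbracket$ (``pairing the $\g^*$ and $\g$ factors'') does not fix this factor, so this is an ambiguity in the source rather than an error in your argument: the lemma is true exactly for the normalization under which the paper's twist formula $\phi' = \phi + \tfrac12\llbracket\delta,t\rrbracket + \tfrac12[t,t]$ agrees with the Drinfeld/Alekseev--Kosmann-Schwarzbach twist, and your computation in effect identifies that normalization. With that convention fixed (together with the left-action convention making $a$ a morphism, not an anti-morphism, of Schouten algebras, as you note), the proof is complete.
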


Quasi-Poisson groups usually appear in the following way.

\begin{defn}
A \defterm{group pair} is a pair of algebraic groups $(D, G)$ where $G\subset D$ is a closed subgroup together with a nondegenerate element $c\in\Sym^2(\fd)^D$ such that $\g\subset \fd$ is Lagrangian.
\end{defn}

\begin{defn}
A \defterm{quasi-triple} is a group pair $(D, G)$ together with a Lagrangian complement $\h\subset \fd$ to $\g$ which we do not assume is a Lie subalgebra.
\end{defn}

Denote by $p_\h\colon \fd^*\cong \fd\rightarrow \g$ the projection determined by the complement.

\begin{example}
If $G$ is equipped with a nondegenerate element $c\in\Sym^2(\g)^G$, we have a group pair $(G\times G, G)$ where $G\subset G\times G$ is equipped diagonally and the pairing on $\fd=\g\oplus\g$ is the difference of $c$ on each summand.
\label{ex:factorizablegrouppair}
\end{example}

\begin{remark}
The complementarity condition implies that the composite $\h\rightarrow \fd\rightarrow \fd/\g$ is an isomorphism and the Lagrangian condition for $\g$ implies that $\g^*\rightarrow \fd/\g$ induced by $c$ is an isomorphism.
\end{remark}

Fix a group pair $(D, G)$. Any two Lagrangian complements $\h_1, \h_2\subset \fd$ to $\g\subset \fd$ differ by a twist $t\in\wedge^2(\g)$ as described in \cite[Section 2.2]{AKS}. The difference $p_{\h_2}-p_{\h_1}$ of the two projections $\fd^*\rightarrow \g$ is then given by the composite
\begin{equation}
\fd^*\longrightarrow \g^*\xrightarrow{-t} \g.
\label{eq:complementprojectiondifference}
\end{equation}

\begin{defn}
A \defterm{group triple} is a triple of algebraic groups $(D, G, G^*)$ such that $(D, G)$ and $(D, G^*)$ are group pairs and $\g^*\subset \fd$ and $\g\subset \fd$ are complementary.
\end{defn}

\begin{example}
For any group $G$ the triple $(\T^* G, G, \g^*)$ is a group triple, where $\Lie(\T^* G) = \g\oplus \g^*$ has the obvious pairing between $\g$ and $\g^*$ and $\g^*\subset \T^* G$ is an abelian subgroup.
\end{example}

\begin{example}
If $G$ is a semisimple group with a choice of a Borel subgroup $B_+\subset G$ and a maximal torus $T\subset B_+$. Consider $D=G\times G$ and equip its Lie algebra $\fd=\g\oplus \g$ with the difference of the Killing forms on each summand. Then the diagonal embedding $G\subset D$ provides a group pair. Let $B_-\subset G$ be the opposite Borel subgroup and denote by $p_{\pm}\colon B_{\pm}\rightarrow T$ the projections. Define the subgroup $G^* =\{b_+\in B_+,\ b_-\in B_-\ |\ p_+(b_+)p_-(b_-)=1\}\subset B_+\times B_-$. Then $(G\times G, G, G^*)$ is a group triple.
\end{example}

Given a quasi-triple $(D, G, \h)$ it is shown in \cite[Section 3]{AKS} that we obtain natural quasi-Poisson structures $\pi_D^\h$ and $\pi_G^\h$ on $D$ and $G$ so that $G\subset D$ is a quasi-Poisson map. If $\h\subset \fd$ is a Lie subalgebra, then these are in fact Poisson-Lie structures. Moreover, changing the Lagrangian complement $\h$ to another one differing by $t\in\wedge^2(\g)$ corresponds to twisting the quasi-Poisson structures $\pi_D^\h$ and $\pi_G^\h$ by $t$.

So, for a group pair $(D, G)$ we will say a $G$-variety $X$ is a quasi-Poisson $G$-variety if it is equipped with a family of bivectors $\pi_X^\h$ for any complement $\h\subset \fd$ such that:
\begin{itemize}
\item $(X, \pi_X^\h)$ is a quasi-Poisson $(G, \pi_G^\h)$-variety.

\item For any two complements $\h_1, \h_2\subset \fd$ differing by a twist $t\in\wedge^2(\g)$ we have
\[\pi_X^{\h_2} = \pi_X^{\h_1} - a(t).\]
\end{itemize}

\subsection{Classical moment maps}
\label{sect:classicalmomentmaps}

Consider the $D$-space $D/G$ via the left $D$-action on itself. We may also restrict it to a $G$-action on $D/G$. Let $\{e_i\}$ be a basis of $\fd$ and $\{e^i\}$ the dual basis of $\fd^*$. We denote by $\tilde{a}\colon \fd\rightarrow \Gamma(D/G, \T_{D/G})$ the infinitesimal $D$-action.

\begin{defn}
Let $(D, G)$ be a group pair and $X$ a quasi-Poisson $G$-space. A $G$-equivariant map $\mu\colon X\rightarrow D/G$ is a \defterm{moment map} if for some Lagrangian complement $\h\subset \fd$
\[\{\mu^* f_1, f_2\}_\h = \sum_i \mu^*(\tilde{a}(e_i). f_1) \cdot a(p_\h(e^i)). f_2\]
for every $f_1\in\cO_G$ and $f_2\in\cO_X$.
\label{def:classicalmomentmap}
\end{defn}

The following statement is a version of \cite[Proposition 5.1.5]{AKS}.

\begin{prop}
If the moment map condition in \cref{def:classicalmomentmap} is satisfied for some Lagrangian complement $\h\subset \fd$, it is satisfied for any Lagrangian complement.
\end{prop}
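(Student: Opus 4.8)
The plan is to reduce the statement to a comparison between two Lagrangian complements differing by a twist and then to check that both sides of the moment map equation change by the same amount. Since any two Lagrangian complements $\h_1,\h_2\subset\fd$ differ by a twist $t\in\wedge^2(\g)$ (see \cite[Section 2.2]{AKS}), it suffices to assume the condition in \cref{def:classicalmomentmap} holds for $\h_1$ and deduce it for $\h_2$. I would compute separately the difference of the left-hand sides and the difference of the right-hand sides of the moment map equation and verify that they coincide, so that validity for $\h_1$ forces validity for $\h_2$.

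For the left-hand side I would use the twisting rule $\pi_X^{\h_2}=\pi_X^{\h_1}-a(t)$ recorded above, which gives
\[\{\mu^* f_1,f_2\}_{\h_2}-\{\mu^* f_1,f_2\}_{\h_1}=-a(t)(\d\mu^* f_1,\d f_2).\]
Writing $t=\tfrac12\sum_{a,b} t^{ab}\, x_a\wedge x_b$ in a basis $\{x_a\}$ of $\g$, the bivector $a(t)=\tfrac12\sum t^{ab}\, a(x_a)\wedge a(x_b)$ evaluates, by antisymmetry of $t^{ab}$, to $\sum_{a,b} t^{ab}\,(a(x_a).\mu^* f_1)\,(a(x_b).f_2)$, so the left-hand difference equals $-\sum_{a,b} t^{ab}\,(a(x_a).\mu^* f_1)\,(a(x_b).f_2)$.

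For the right-hand side, only the projection $p_\h$ depends on the complement, so the difference is $\sum_i \mu^*(\tilde{a}(e_i).f_1)\cdot a\bigl((p_{\h_2}-p_{\h_1})(e^i)\bigr).f_2$. I would then invoke \eqref{eq:complementprojectiondifference}, which identifies $p_{\h_2}-p_{\h_1}$ with the composite $\fd^*\rightarrow\g^*\xrightarrow{-t}\g$. Choosing $\{e_i\}$ adapted, so that $\{x_a\}$ is extended by a basis of $\h_1$, the restriction $e^i|_\g$ vanishes except when $e_i=x_a$; hence only the terms with $e_i\in\g$ survive, and there $(p_{\h_2}-p_{\h_1})(x^a)=-t(x^a)=-\sum_b t^{ab}x_b$. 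Finally, $G$-equivariance of $\mu$ gives $\mu^*(\tilde{a}(x_a).f_1)=a(x_a).\mu^* f_1$ for $x_a\in\g$, turning the right-hand difference into $-\sum_{a,b} t^{ab}\,(a(x_a).\mu^* f_1)\,(a(x_b).f_2)$, which agrees with the left-hand difference.

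The routine but delicate part, and the step I expect to be the main obstacle, is the bookkeeping in the last paragraph: correctly handling the restriction map $\fd^*\to\g^*$ together with the sign and normalization conventions in \eqref{eq:complementprojectiondifference} and in the expansion of $a(t)$, and applying $G$-equivariance to replace the full $D$-action terms $\tilde{a}(e_i)$ by genuine $\g$-action terms. The key simplification is that the $\g^*$-projection annihilates all basis vectors outside $\g$, so that although $e_i$ ranges over all of $\fd$ only the $\g$-directions contribute, which is precisely what makes the equivariance identity applicable.
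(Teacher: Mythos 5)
Your proposal is correct and follows essentially the same route as the paper: both reduce to two complements differing by a twist $t\in\wedge^2(\g)$, use $\pi_X^{\h_2}=\pi_X^{\h_1}-a(t)$ and the formula \eqref{eq:complementprojectiondifference} in an adapted basis $\{x_a\}\cup\{x^a\}$ of $\fd$, and invoke $G$-equivariance of $\mu$ to match the two differences. The only cosmetic distinction is that you compare the differences of the two sides separately, whereas the paper transforms the $\h'$-equation directly into the $\h$-equation in one chain of equalities; the bookkeeping of signs, the factor from $t=\tfrac12\sum t^{ab}x_a\wedge x_b$, and the observation that only the $\g$-directions survive the restriction $\fd^*\to\g^*$ all check out.
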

\begin{proof}
The moment map condition is equivalent to the commutativity of the diagram
\[
\xymatrix{
\T^*_{X, x} \ar^{\pi_X^\h}[rr] && \T_{X, x} \\
\T^*_{D/G, \mu(x)} \ar^-{\tilde{a}^*}[r] \ar^{\mu^*}[u] & \fd^* \ar^{p_\h}[r] & \g \ar^{a}[u]
}
\]
for every $x\in X$.

Suppose $\h'\subset \fd$ is another Lagrangian complement differing by a twist $t\in\wedge^2(\g)$ and write
\[t = \frac{1}{2} \sum_{i, j} t^{ij} x_i\wedge x_j\]
where $\{x_i\}$ is a basis of $\g$. Then $\{x_i, x^i\}$ is a basis of $\fd$ where $x^i$ is the dual basis of $\g^*\cong \h$. Since $\pi_X^{\h'} = \pi_X^\h - a(t)$, we get
\begin{align*}
\{\mu^* f_1, f_2\}_{\h'} &= \{\mu^* f_1, f_2\}_\h - \sum_{i, j} t^{ij} a(x_i).\mu^* f_1\cdot a(x_j).f_2 \\
&= \sum_j \mu^*(\tilde{a}(x^j).f_1)\cdot a(x_j).f_2 - \sum_{i, j} t^{ij} a(x_i).\mu^* f_1\cdot a(x_j).f_2 \\
&= \sum_j \mu^*(\tilde{a}(x^j).f_1)\cdot a(x_j).f_2 - \sum_{i, j} t^{ij} \mu^*(\tilde{a}(x_i). f_1)\cdot a(x_j).f_2 \\
&= \sum_{i, j} \mu^*(\tilde{a}(x^j - t^{ij}x_i).f_1)\cdot a(x_j).f_2
\end{align*}
where we use $G$-equivariance of the moment map in the third line. But by \eqref{eq:complementprojectiondifference} this is exactly the right-hand side of the moment map equation for $\h'$.
\end{proof}

We will now relate \cref{def:classicalmomentmap} to other moment maps appearing in the literature.

\begin{defn}
Let $(D, G)$ be a group pair. A Lagrangian complement $\h\subset \fd$ is \defterm{admissible} at $s\in D/G$ if the map $\h\rightarrow \fd\xrightarrow{\tilde{a}} \T_{D/G, s}$ is an isomorphism.
\end{defn}

If $U\subset D/G$ is an open subset where a Lagrangian complement $\h\subset \fd$ is admissible, we have a one-form $\theta^\h\in\Omega^1(U; \g^*)$ defined using the isomorphism $\T^*_{D/G, s}\cong \h^*\cong \g$. The following definition of moment maps is given in \cite[Definition 5.1.1]{AKS}.

\begin{defn}
Let $(D, G)$ be a group pair and $X$ a quasi-Poisson $G$-variety. A $G$-equivariant map $\mu\colon X\rightarrow D/G$ is a \defterm{moment map} if for every open $U\subset D/G$ and an admissible complement $\h\subset \fd$ on $U$ the equation
\[a(x) = (\pi_X^\h)^\sharp(\mu^*\langle \theta^\h, x\rangle)\]
holds on $U$ for every $x\in\g$.
\label{def:AKSmomentmap}
\end{defn}

\begin{remark}
For the group pair $(G\times G, G)$ from \cref{ex:factorizablegrouppair} we have $D/G\cong G$ as $G$-spaces where the $G$-action on the right is given by conjugation. In this case the notion of a moment map $\mu\colon X\rightarrow G$ is closely related to the notion of group-valued quasi-symplectic moment maps introduced in \cite{AMM} as explained in \cite{AKSM} and \cite{LBS}.
\end{remark}

\begin{prop}
Let $(D, G)$ be a group pair and $X$ a quasi-Poisson $G$-variety. A $G$-equivariant map $\mu\colon X\rightarrow D/G$ is a moment map in the sense of \cref{def:classicalmomentmap} iff if it is a moment map in the sense of \cref{def:AKSmomentmap}.
\label{prop:AKSequivalence}
\end{prop}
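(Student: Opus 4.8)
The plan is to show that the two definitions of moment map agree by comparing them at a point where an admissible complement exists, and then leveraging the independence-of-complement result (the proposition immediately preceding \Cref{def:AKSmomentmap} in the text) to handle the general case. Both definitions are phrased in terms of the bivector $\pi_X^\h$ and the infinitesimal action, so the heart of the matter is to unwind the two formulas into the same statement about the composite map $\T^*_{X,x}\to\T_{X,x}$.

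First I would recall the diagrammatic reformulation of \cref{def:classicalmomentmap} established in the proof of the preceding proposition: the condition there is equivalent to commutativity of the square expressing $\pi_X^\h\circ\mu^*=a\circ p_\h\circ\tilde a^*$ as maps $\T^*_{D/G,\mu(x)}\to\T_{X,x}$. Next I would translate \cref{def:AKSmomentmap} into a similar diagram. The key observation is that when $\h$ is admissible at $s\in D/G$, the composite $\h\xrightarrow{\tilde a}\T_{D/G,s}$ is an isomorphism, and the one-form $\theta^\h\in\Omega^1(U;\g^*)$ is defined precisely as the inverse identification $\T^*_{D/G,s}\cong\h^*\cong\g$. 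Thus for $x\in\g$, the pairing $\langle\theta^\h,x\rangle$ is the covector dual under $\tilde a$ to the element $x$ viewed in $\h$, and the equation $a(x)=(\pi_X^\h)^\sharp(\mu^*\langle\theta^\h,x\rangle)$ says exactly that $a\circ p_\h$ and $\pi_X^\h\circ\mu^*\circ\tilde a^*$ agree on the image of $\tilde a|_\h$. Since $\h$ is admissible, $\tilde a^*$ is surjective onto $\h^*\cong\g$ and $p_\h$ restricted appropriately is the identity, so this matches the square above on the admissible open set $U$.

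The main subtlety will be bookkeeping the various identifications induced by the nondegenerate pairing $c\in\Sym^2(\fd)^D$: the isomorphisms $\fd^*\cong\fd$, $\g^*\cong\fd/\g$, $\h^*\cong\g$, and how $p_\h\colon\fd^*\to\g$ interacts with $\tilde a^*$. I would verify carefully that under these identifications the covector $\mu^*\langle\theta^\h,x\rangle$ coincides with $\mu^*\tilde a^*$ applied to the class dual to $x$, so that the AKS equation for all $x\in\g$ on $U$ is precisely the restriction of the square to $U$. This gives the equivalence wherever an admissible complement exists.

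Finally I would argue that admissible complements exist locally: for any $s\in D/G$ one can choose a Lagrangian complement $\h$ transverse to the stabilizer direction so that $\tilde a|_\h$ is an isomorphism near $s$, so such $U$ cover $D/G$. Therefore \cref{def:AKSmomentmap}, which quantifies over all admissible complements on all opens, is equivalent to the commutativity of the square for some (hence, by the preceding proposition, every) Lagrangian complement, which is \cref{def:classicalmomentmap}. The hard part is ensuring the two sides of the translated equation are compared over the same domain: \cref{def:classicalmomentmap} uses an arbitrary complement globally, while \cref{def:AKSmomentmap} is a local condition requiring admissibility, so the bridge is exactly the independence result together with the local existence of admissible complements.
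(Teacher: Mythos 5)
Your proposal follows essentially the same route as the paper's proof: both translate \cref{def:AKSmomentmap} into commutativity of a square via the identification $\theta^\h$ (the inverse of the admissibility isomorphism $\h\xrightarrow{\tilde{a}}\T_{D/G,s}$), recognize it as exactly the diagram encoding \cref{def:classicalmomentmap}, and use the preceding complement-independence proposition to pass between local admissible complements and a single global Lagrangian complement. The only difference is one of emphasis: the paper simply checks the condition on an open cover by admissible opens, while you additionally spell out the local existence of admissible complements, a point the paper leaves implicit.
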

\begin{proof}
We may check the moment map condition in \cref{def:classicalmomentmap} on an open cover. Consider an open set $U\subset D/G$ from this cover an admissible Lagrangian complement $\h\subset \fd$ on $U$. For $x\in\mu^{-1}(U)$ the moment map condition \cref{def:AKSmomentmap} is equivalent to the commutativity of the diagram
\[
\xymatrix{
\T^*_{X, x} \ar^{(\pi_X^\h)^\sharp}[r] & \T_{X, x} \\
\T^*_{D/G, \mu(x)} \ar^{\mu^*}[u] & \g \ar_-{\theta^\h}[l] \ar^{a}[u]
}
\]
Since $\theta$ is an isomorphism, this diagram commutes iff the diagram
\[
\xymatrix{
\T^*_{X, x} \ar^{(\pi_X^\h)^\sharp}[rr] && \T_{X, x} \\
\T^*_{D/G, \mu(x)} \ar^{\mu^*}[u] \ar^-{\tilde{a}^*}[r] & \fd^* \ar^{p_\h}[r] & \g \ar^{a}[u]
}
\]
commutes which is precisely the moment map condition in \cref{def:classicalmomentmap}.
\end{proof}

Let us now fix a group triple $(D, G, G^*)$. Then we get a $G^*$-action on $D/G$ coming from the inclusion $G^*\subset D$. The following is well-known.

\begin{lm}
The composite
\[f\colon G^*\longrightarrow D\longrightarrow D/G\]
is \'{e}tale.
\label{lm:Rossoetale}
\end{lm}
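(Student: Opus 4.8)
The plan is to exhibit $f$ as a morphism of smooth varieties of equal dimension whose differential is an isomorphism at every point, since such a map is étale. The argument is entirely infinitesimal, reduced to the identity coset by homogeneity.

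First I would record the dimension count. Because $c\in\Sym^2(\fd)$ is nondegenerate and $\g\subset\fd$ is Lagrangian, we have $\dim\g=\tfrac12\dim\fd$; since $(D,G^*)$ is also a group pair, $\g^*\subset\fd$ is Lagrangian, so $\dim G^*=\dim\g^*=\tfrac12\dim\fd$. On the other hand $\dim(D/G)=\dim\fd-\dim\g=\tfrac12\dim\fd$. Thus the source and target of $f$ are smooth of the same dimension.

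Next I would compute the differential of $f$ at the identity coset. Writing $\pi\colon D\to D/G$ for the quotient map and $\iota\colon \g^*=\T_e G^*\hookrightarrow\fd=\T_e D$ for the inclusion of Lie algebras, the differential $\d f_e$ is the composite $\g^*\xrightarrow{\iota}\fd\xrightarrow{\d\pi_e}\fd/\g$, where $\d\pi_e$ is the canonical projection with kernel $\g$. The complementarity hypothesis in the definition of a group triple says precisely that $\fd=\g\oplus\g^*$, so this composite is an isomorphism; hence $\d f_e$ is an isomorphism.

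Finally I would propagate this over all of $G^*$ by homogeneity. The map $f$ is $G^*$-equivariant, where $G^*$ acts on itself by left translation and on $D/G$ through the inclusion $G^*\subset D$ and the left $D$-action; explicitly $f(hg)=h\cdot f(g)$ for $h,g\in G^*$, i.e. $f\circ L_h=L_h\circ f$. For each $h\in G^*$ the translations $L_h$ on $G^*$ and on $D/G$ are isomorphisms, and differentiating this relation at $e$ yields $\d f_h=\d(L_h)_{eG}\circ \d f_e\circ(\d(L_h)_e)^{-1}$, which is again an isomorphism. Since every point of $G^*$ arises this way, $\d f$ is an isomorphism everywhere, and therefore $f$ is étale. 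The only content of the lemma is the tangent-space identity at $e$, which is just the complementarity $\fd=\g\oplus\g^*$; the homogeneity step is routine once one observes that $f$ intertwines the two left-translation actions.
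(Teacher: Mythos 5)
Your proof is correct and takes essentially the same route as the paper: reduce to the identity coset by $G^*$-equivariance of $f$, then observe that the tangent map $\g^*\rightarrow\fd\rightarrow\fd/\g$ is an isomorphism because $\g^*\subset\fd$ is a Lagrangian complement to $\g$. The only (cosmetic) difference is that the paper phrases the reduction via formal \'{e}taleness of a locally finitely presented map, which is suited to the general ground ring $k$, whereas your smooth-variety differential criterion and the (redundant) dimension count implicitly work over a field.
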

\begin{proof}
Since the map is locally of finite presentation, it is enough to show that it is formally \'{e}tale. Since $G^*\rightarrow D/G$ is $G^*$-equivariant, it is enough to check that it is formally \'{e}tale at the unit $e\in G^*$. The map on tangent spaces at $e\in G^*$ is
\[\g^*\longrightarrow \fd\longrightarrow \fd/\g\]
which is an isomorphism since $\g^*\subset \fd$ is a complementary Lagrangian to $\g\subset \fd$.
\end{proof}

\begin{remark}
Consider the quantum Manin triple $(\Rep(D), \Rep(G), \Rep(G^*))$ where every category is symmetric monoidal and the action functors $T\colon \Rep(D)\rightarrow \Rep(G)$ and $\tilde{T}\colon \Rep(D)\rightarrow \Rep(G)$ are the obvious forgetful functors. Then the morphism \eqref{eq:REAFRT} is $\cO(D/G)\rightarrow \cO(G^*)$.
\end{remark}

In the case of group triples, the moment map equation \cref{def:classicalmomentmap} may be written as
\begin{equation}
\{\mu^* f_1, f_2\} = \sum_i \mu^*(\tilde{a}(x^i).f_1) a(x_i).f_2,
\end{equation}
where $\{x_i\}$ is a basis of $\g$ and $\{x^i\}$ is the dual basis of $\g^*$.

The coadjoint action of $\g$ on $\g^*$ gives a $\g$-action on $G^*$ such that $G^*\rightarrow D/G$ is $\g$-equivariant. Let $\theta\in\Omega^1(G^*; \g^*)$ be the left-invariant Maurer--Cartan form. The Lagrangian complement $\g^*\subset \fd$ is admissible for every $f(g)$ and we have
\begin{equation}
f^*\theta^{\g^*} = \theta.
\label{eq:LuAKS}
\end{equation}

The following notion of moment map was introduced in \cite{LuClassical}.

\begin{defn}
Let $X$ be a Poisson $G$-variety. A $\g$-equivariant map $X\rightarrow G^*$ is a \defterm{moment map} if
\[a(x) = \pi_X^\sharp(\mu^*\langle \theta, x\rangle)\]
for every $x\in\g$.
\label{def:Lumomentmap}
\end{defn}

\begin{lm}
Let $(D, G, G^*)$ be a group triple, $X$ a Poisson $G$-variety and $\mu\colon X\rightarrow D/G$ a $G$-equivariant map.
\begin{enumerate}
\item If $\mu$ satisfies the moment map condition of \cref{def:AKSmomentmap}, then $\mu^{-1}(G^*)\rightarrow G^*$ satisfies the moment map condition of \cref{def:Lumomentmap}.

\item If $D/G$ is irreducible and $\mu^{-1}(G^*)\rightarrow G^*$ satisfies the moment map condition of \ref{def:Lumomentmap}, then $\mu\colon X\rightarrow D/G$ satisfies the moment map condition \cref{def:AKSmomentmap}.
\end{enumerate}
\label{lm:LuAKS}
\end{lm}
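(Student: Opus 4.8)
The plan is to exploit the \'{e}tale map $f\colon G^*\to D/G$ from \cref{lm:Rossoetale}, together with the comparison $f^*\theta^{\g^*}=\theta$ of one-forms in \eqref{eq:LuAKS}, to transport the moment map equations back and forth between $X$ and $\mu^{-1}(G^*)$. Throughout I would freely use \cref{prop:AKSequivalence} to pass between the two equivalent formulations of the Alekseev--Kosmann-Schwarzbach condition (\cref{def:classicalmomentmap} and \cref{def:AKSmomentmap}), choosing whichever is more convenient. The essential point is that the complement $\h=\g^*$ is admissible precisely on the open image of $f$, and that on this locus $\pi_X^{\g^*}$ is the Poisson structure $\pi_X$ since $\g^*\subset\fd$ is a Lie subalgebra.

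For part (1) I would set $\iota\colon\mu^{-1}(G^*)=X\times_{D/G}G^*\to X$ and $\bar\mu\colon\mu^{-1}(G^*)\to G^*$, so that $f\circ\bar\mu=\mu\circ\iota$. As $f$ is \'{e}tale, so is $\iota$; hence $\mu^{-1}(G^*)$ carries a unique Poisson structure $\pi$ making $\iota$ Poisson, and $\d\iota$ identifies $\pi^\sharp(\iota^*\omega)$ with $\iota^*(\pi_X^\sharp\omega)$ for any one-form $\omega$. On $\mathrm{im}\,f$ the complement $\g^*$ is admissible, so \cref{def:AKSmomentmap} gives $a(x)=\pi_X^\sharp(\mu^*\langle\theta^{\g^*},x\rangle)$ on $\mu^{-1}(\mathrm{im}\,f)$ for every $x\in\g$. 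Pulling back along $\iota$, using $\iota^*\mu^*=\bar\mu^*f^*$ together with $f^*\theta^{\g^*}=\theta$ to rewrite the right-hand side as $\pi^\sharp(\bar\mu^*\langle\theta,x\rangle)$, and using $\g$-equivariance of $\iota$ to identify $\iota^*a(x)$ with $a(x)$, yields exactly Lu's equation of \cref{def:Lumomentmap} for $\bar\mu$.

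For part (2) I would run the same \'{e}tale computation in reverse: Lu's condition on $\mu^{-1}(G^*)$ gives \cref{def:AKSmomentmap} with $\h=\g^*$ on the open set $\mu^{-1}(\mathrm{im}\,f)$, which by \cref{prop:AKSequivalence} is the single global bracket identity of \cref{def:classicalmomentmap} (for $\h=\g^*$) restricted to this open. Since both sides of that identity are regular functions on $X$, it now suffices to check it on a dense open subset. Here the hypothesis that $D/G$ is irreducible enters: $\mathrm{im}\,f$ is a nonempty open, hence dense, so $\mu^{-1}(\mathrm{im}\,f)$ is dense in $X$, and the identity of \cref{def:classicalmomentmap} propagates to all of $X$. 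Applying \cref{prop:AKSequivalence} once more returns \cref{def:AKSmomentmap} on all of $D/G$. I expect this globalization step to be the main obstacle: one only has the moment map equation a priori on the proper (though dense) open $\mathrm{im}\,f$ where $\g^*$ is admissible, and the argument that it extends relies both on irreducibility of $D/G$ to guarantee density and on reading the condition in its global regular-function form \cref{def:classicalmomentmap}, so that density together with reducedness of $X$ forces the identity everywhere.
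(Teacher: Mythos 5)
Your proof follows the paper's argument exactly: part (1) is the admissibility of $\g^*$ on the image of $f$ together with \eqref{eq:LuAKS}, and part (2) is the \'{e}tale-hence-open observation of \cref{lm:Rossoetale} plus irreducibility of $D/G$ to get density of $f(G^*)$ and then extend the moment map identity from $\mu^{-1}(f(G^*))$ to all of $X$. The one inference you flag as delicate --- passing from density of $f(G^*)$ in $D/G$ to density of $\mu^{-1}(f(G^*))$ in $X$ --- is precisely the step the paper compresses into ``the moment map condition may be checked by pulling back to $G^*$,'' so your elaboration (the fiber product $X\times_{D/G}G^*$, the pulled-back Poisson structure along the \'{e}tale map, and the regular-function form of the identity via \cref{def:classicalmomentmap}) simply makes the same route explicit.
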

\begin{proof}$ $
\begin{enumerate}
\item On the image of $G^*$ the complement $\g^*\subset \fd$ is admissible and so the result follows from \eqref{eq:LuAKS}.

\item By \cref{lm:Rossoetale} the map $G^*\rightarrow D/G$ is \'{e}tale and hence open. Since $D/G$ is irreducible, $f(G^*)\subset D/G$ is dense and so the moment map condition may be checked by pulling back to $G^*$.
\end{enumerate}
\end{proof}

\begin{example}
Consider the group triple $(T^* G, G, \g^*)$. The Poisson-Lie structure on $G$ is zero, so a Poisson $G$-variety is a Poisson variety $X$ equipped with a $G$-action which preserves the Poisson structure. The map $\g^*\rightarrow D/G$ is an isomorphism, so the moment map conditions \cref{def:AKSmomentmap} and \cref{def:Lumomentmap} are equivalent. In this case the moment map condition for a $G$-equivariant map $\mu\colon X\rightarrow \g^*$ reduces to
\[a(x) = \pi_X^\sharp(\d\mu(x)),\]
which is the usual moment map condition.
\end{example}

\subsection{Classical limit}
\label{sect:classicallimit}

In this section we show that classical degenerations of quantum moment maps given by \cref{def:quantummomentmap} produce classical moment maps in the sense of \cref{def:classicalmomentmap}. Let $\cA=k[\hbar]/\hbar^2$.

Let $D$ be an affine algebraic group and suppose $\cO_\hbar(D)$ is a coquasitriangular Hopf algebra flat over $\cA$ together with an isomorphism of coquasitriangular Hopf algebras $\cO_\hbar(D)/\hbar\cong \cO(D)$. Let
\[r = (\br - \epsilon\otimes\epsilon)/\hbar\colon \cO(D)\otimes \cO(D)\rightarrow k\]
and
\[\{a, b\}_D=\frac{ab-ba}{\hbar}\]
the biderivation on $\cO(D)$. Standard arguments (see \cite[Proposition 9.5]{ES} for the dual version) show the following:
\begin{itemize}
\item $r\in\fd\otimes \fd$.

\item The symmetric part $c$ of $r$ is $\fd$-invariant.

\item The biderivation $\{-, -\}_D$ is multiplicative.

\item $\{-, -\}_D$ is the biderivation on $D$ induced by $r$, i.e.
\[\{a,b\}_D = r(a_{(2)}, b_{(2)})b_{(1)}a_{(1)} - r(a_{(1)}, b_{(1)})a_{(2)}b_{(2)}\]
for every $a,b\in\cO(D)$.
\end{itemize}

In addition, consider a closed subgroup $G\subset D$ and a $\cO_\hbar(D)$-coquasitriangular Hopf algebra $\cO_\hbar(G)$ flat over $\cA$ together with an isomorphism of $\cO(D)$-coquasitriangular Hopf algebras $\cO_\hbar(G)/\hbar\cong \cO(G)$. Denote the $\cO_\hbar(D)$-coquasitriangular Hopf structure on $\cO_\hbar(G)$ by $\br_G$ and let
\[r_G = (\br_G - \epsilon\otimes\epsilon)/\hbar\colon \cO(D)\otimes \cO(G)\rightarrow k\]
and
\[\{a, b\}_G=\frac{ab-ba}{\hbar}\]
the biderivation on $\cO(G)$. Then analogously we get:
\begin{itemize}
\item $r_G=r\in\fd\otimes \g$. In particular, $\g\subset \fd$ is coisotropic with respect to $c$.

\item The biderivation $\{-, -\}_G$ is multiplicative.

\item $\{-, -\}_G$ is the biderivation on $G$ induced by $r$, i.e.
\[\{f(d), h\}_G = r(d_{(2)}, h_{(2)})h_{(1)}f(d_{(1)}) - r(d_{(1)}, h_{(1)})f(d_{(2)}) h_{(2)}\]
for every $d\in\cO(D)$ and $h\in\cO(G)$. Since $\cO(D)\rightarrow \cO(G)$ is surjective, this uniquely determines $\{-, -\}_G$.
\end{itemize}

Next, consider a $G$-variety $X$ and suppose $A_\hbar$ is an $\cO_\hbar(G)$-comodule algebra flat over $\cA$ together with an isomorphism of $G$-representations $A_\hbar/\hbar\cong \cO(X)$. Then 
\[\{a, b\}_X=\frac{ab-ba}{\hbar}\]
is a biderivation on $\cO(X)$ such that the coaction $\cO(X)\rightarrow \cO(X)\otimes \cO(G)$ is compatible with the brackets on both sides.

\begin{example}
Let $\Rep_\hbar(D) = \CoMod_{\cO_{\hbar}(D)}(\Mod_\cA)$ and $\Rep_\hbar(G) = \CoMod_{\cO_{\hbar}(G)}(\Mod_\cA)$. The forgetful functor $T\colon \Rep_\hbar(D)\rightarrow \Rep_\hbar(G)$ is monoidal, so its right adjoint $T^R$ is lax monoidal. Therefore, $T^R(1)\in\Rep_\hbar(D)$ is an algebra object. At $\hbar = 0$ we get $\cO(D/G)$ as an $\cO(D)$-comodule algebra. Recall also the algebra $\cF = TT^R(1)$ which at $\hbar=0$ is $\cO(D/G)$ viewed as an $\cO(G)$-comodule algebra.

The existence of the algebra map $\epsilon\colon \cF = TT^R(1)\rightarrow 1$ implies that the bracket $\{-, -\}_{D/G}$ vanishes at the basepoint $e\in D/G$. Therefore, the coaction $\cO(D/G)\rightarrow \cO(D)$ on $e\in D/G$ is compatible with the brackets and is injective, so $\{-, -\}_{D/G}$ is uniquely determined by $\{-, -\}_D$. See also \cite[Section 3.5]{AKS} for an explicit description of this bracket.
\end{example}

Suppose $c\in\Sym^2(\fd)^D$ is nondegenerate and $G\subset D$ is Lagrangian (rather than just coisotropic). Then $(D, G)$ is a group pair and we may talk about moment maps. Consider the map $r_2\colon \h=\g^*\rightarrow \fd$ given by pairing with the second component of $r$.

\begin{lm}
The map $r_2\colon \h\rightarrow \fd$ is injective and is a Lagrangian embedding complementary to $\g\subset \fd$. Moreover,
\[r \in\g^*\otimes \g\subset \fd\otimes \g\]
is the canonical element.
\end{lm}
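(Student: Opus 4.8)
The plan is to use the nondegenerate invariant pairing $c$ to identify $\fd^*\cong\fd$ and to encode the two legs of $r$ as linear maps. Writing $r=\sum_k u_k\otimes v_k$ with $u_k\in\fd$ and $v_k\in\g$ (recall from the preceding discussion that $r\in\fd\otimes\g$ with symmetric part $c$), I would introduce the first-leg map $r_1\colon\fd^*\to\g$, $\eta\mapsto\sum_k\langle\eta,u_k\rangle v_k$, alongside the given second-leg map $r_2$; pairing the second leg defines a map $\fd^*\to\fd$ which factors as $r_2\circ\mathrm{res}$ through the restriction $\mathrm{res}\colon\fd^*\to\g^*$, since it depends only on $\eta|_\g$. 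Writing $c^\sharp\colon\fd^*\to\fd$ for the isomorphism induced by $c$ and $c^\flat=(c^\sharp)^{-1}$, the relation $c=\tfrac12(r+r^{21})$ translates into
\[
c^\sharp=\tfrac12\bigl(r_1+r_2\circ\mathrm{res}\bigr),
\]
while the Lagrangian condition on $\g$ becomes $c^\sharp(\mathrm{Ann}(\g))=\g$, where $\mathrm{Ann}(\g)=\ker(\mathrm{res})$. These two identities are the engine of the whole argument.

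For injectivity and complementarity at once, I would establish the single implication: if $r_2(\xi)\in\g$ then $\xi=0$. Lifting $\xi$ to $\eta\in\fd^*$ with $\mathrm{res}(\eta)=\xi$, one has $c^\sharp(\eta)=\tfrac12\bigl(r_1(\eta)+r_2(\xi)\bigr)$; since $r_1(\eta)\in\g$ automatically and $r_2(\xi)\in\g$ by hypothesis, this lies in $\g=c^\sharp(\mathrm{Ann}(\g))$, and injectivity of $c^\sharp$ forces $\eta\in\mathrm{Ann}(\g)$, hence $\xi=\mathrm{res}(\eta)=0$. Taking $r_2(\xi)=0$ yields injectivity of $r_2$, so $\dim r_2(\g^*)=\dim\g=\tfrac12\dim\fd$, while the full implication gives $r_2(\g^*)\cap\g=0$; thus $\h:=r_2(\g^*)$ is a complement to $\g$ by a dimension count.

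By the same dimension count it then suffices to check that $\h$ is isotropic. For $\xi,\zeta\in\g^*$ with lifts $\eta,\theta$, I would substitute $r_2(\xi)=2c^\sharp(\eta)-r_1(\eta)$ and the analogous expression for $r_2(\zeta)$ into $c(r_2(\xi),r_2(\zeta))$ and expand. The term $c(r_1(\eta),r_1(\theta))$ vanishes because $r_1$ lands in the isotropic subspace $\g$; using $c(c^\sharp(\eta),x)=\langle\eta,x\rangle$, symmetry of $c$, and $4\langle\eta,c^\sharp(\theta)\rangle=2\langle\eta,r_1(\theta)\rangle+2\langle\eta,(r_2\circ\mathrm{res})(\theta)\rangle$, all terms collapse except $2\langle\eta,(r_2\circ\mathrm{res})(\theta)\rangle$ and $-2\langle\theta,r_1(\eta)\rangle$, which are both equal to the evaluation $\sum_k\langle\eta,u_k\rangle\langle\theta,v_k\rangle$ and therefore cancel. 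This single computation is the main obstacle, and the only point requiring real care is keeping track of which leg of $r$ is being paired at each step; everything else is formal consequence of the two identities from the first paragraph.

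Finally, for the canonical-element assertion I would observe that the image of $r_2\circ\mathrm{res}$ is exactly $\h$, so the first leg of $r$ lands in $\h$ and hence $r\in\h\otimes\g$. Transporting the first leg back along the isomorphism $r_2\colon\g^*\xrightarrow{\sim}\h$ produces an element $\tilde r\in\g^*\otimes\g$ whose associated endomorphism of $\g^*$, obtained by pairing the second leg against $\g^*$, is $r_2^{-1}\circ r_2=\mathrm{id}$. Since the canonical element of $\g^*\otimes\g$ is precisely the one inducing the identity, $\tilde r$ is canonical, which is exactly the statement that $r\in\g^*\otimes\g$ is the canonical element.
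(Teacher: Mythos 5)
Your proof is correct, and it runs on the same engine as the paper's --- the identity $c^\sharp=\tfrac12\bigl(r_1+r_2\circ\res\bigr)$ together with the Lagrangian condition $c^\sharp(\mathrm{Ann}(\g))=\g$ --- but it is organized quite differently. The paper notes that on $\mathrm{Ann}(\g)=(\fd/\g)^*$ one has $r_1=2c^\sharp$, so $r_1\colon(\fd/\g)^*\to\g$ is an isomorphism, and then simply dualizes: the dual of $r_1|_{\mathrm{Ann}(\g)}$ is the composite $\g^*\xrightarrow{r_2}\fd\to\fd/\g$, which is therefore also an isomorphism, yielding injectivity and complementarity in one stroke; your kernel implication ($r_2(\xi)\in\g$ forces $\xi=0$) plus a dimension count reaches the same conclusion by a more pedestrian but equally valid route. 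The more substantive divergence is the order of the last two steps. The paper establishes the canonical-element claim first --- which, once $r_2$ is an isomorphism onto $\h$, is essentially tautological, exactly as in your final paragraph --- and then obtains Lagrangianity of $\h$ as an immediate corollary: writing $r=\sum_i h_i\otimes x_i$ with $h_i=r_2(x^i)$ for a basis $\{x_i\}$ of $\g$, one has $c=\tfrac12\sum_i(h_i\otimes x_i+x_i\otimes h_i)$, whence $c^\sharp(\mathrm{Ann}(\h))\subseteq\h$, and equality follows by dimensions. You instead prove isotropy of $\h$ by the hands-on expansion of the pairing of $r_2(\xi)$ with $r_2(\zeta)$, which is the most laborious part of your write-up; the computation does close up correctly (the two surviving terms both reduce to $\sum_k\langle\eta,u_k\rangle\langle\theta,v_k\rangle$, using that $v_k\in\g$ so that pairing against $\theta$ only sees $\zeta=\res(\theta)$), but reordering your steps --- canonical element first, Lagrangianity of $\h$ as a consequence --- would let you delete that computation entirely.
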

\begin{proof}
The Lagrangian condition implies that the map
\[r_1\colon (\fd/\g)^*\longrightarrow \g\]
given by pairing with the first component of $r$ is an isomorphism. Its dual is $\g^*\xrightarrow{r_2} \fd\rightarrow \fd/\g$ which is therefore also an isomorphism. So, $\h\rightarrow \fd$ is injective and complementary to $\g\subset \fd$. Therefore, $r_2\colon \g^*\rightarrow \h$ is an isomorphism onto its image, so
\[r = \h\otimes \g\subset \fd\otimes \g\]
is the canonical element. In particular, $\h\subset \fd$ is also Lagrangian.
\end{proof}

So, in our setting we obtain a quasi-triple $(D, G, \h)$. Moreover, $r_1\colon \fd^*\rightarrow \g$ is exactly the map $p_\h\colon \fd^*\rightarrow \g$ induced by $\h\subset \fd$.

\begin{remark}
Conversely, given a quasi-triple $(D, G, \h)$ Alekseev and Kosmann-Schwarzbach \cite[Section 2.3]{AKS} define the canonical $r$-matrix on $\fd$ to be the canonical element $\g^*\otimes \g\subset \fd\otimes \g$.
\end{remark}

\begin{thm}
Suppose $\mu\colon X\rightarrow D/G$ is a $G$-equivariant map and $\mu_\hbar\colon \cF\rightarrow A_\hbar$ is a quantum moment map which is $\mu^*$ modulo $\hbar$. Then $\mu$ is a classical moment map.
\label{thm:momentmapclassicaldegeneration}
\end{thm}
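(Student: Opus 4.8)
The plan is to unpack the quantum moment map condition of \cref{def:quantummomentmap} in the comodule model $(\Rep_\hbar D, \Rep_\hbar G)$ and then extract the coefficient of $\hbar$. By the description of the $\CoMod_{\cO_\hbar(D)}$-monoidal structure on $\CoMod_{\cO_\hbar(G)}$ recalled in \cref{sect:coalgebras}, the field goal isomorphism $\tau_{T^R(1), A}$ is computed from the $D$-coquasitriangular structure $\br_G$, so the defining diagram of \cref{def:quantummomentmap} reads
\[\mu_\hbar(h)\, a = \br_G(h_{(1)}', a_{(1)})\, a_{(0)}\,\mu_\hbar(h_{(0)})\]
for all $h\in\cF$ and $a\in A_\hbar$, where $h\mapsto h_{(0)}\otimes h_{(1)}'$ is the $\cO_\hbar(D)$-coaction on $\cF$ and $a\mapsto a_{(0)}\otimes a_{(1)}$ is the $\cO_\hbar(G)$-coaction on $A_\hbar$ (this is the same manipulation as in the proof of \cref{thm:quantummomentmapManintriple}).

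First I would substitute $\br_G = \epsilon\otimes\epsilon + \hbar\, r$, valid modulo $\hbar^2$ since $\cA = k[\hbar]/\hbar^2$. The $\epsilon\otimes\epsilon$ term collapses by the counit axioms (using that $\mu_\hbar$ is linear and the scalars factor out of the product in $A_\hbar$) to $a\,\mu_\hbar(h)$, so the condition becomes
\[\mu_\hbar(h)\,a - a\,\mu_\hbar(h) = \hbar\, r(h_{(1)}', a_{(1)})\, a_{(0)}\,\mu_\hbar(h_{(0)}).\]
Reducing modulo $\hbar^2$, the left-hand side is $\hbar\{\mu^*\bar h, \bar a\}_X$ by definition of the biderivation $\{-,-\}_X$ on $\cO(X)$, while on the right every factor beyond the explicit $\hbar$ may be taken modulo $\hbar$. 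Dividing by $\hbar$ yields the identity of classical functions
\[\{\mu^*\bar h, \bar a\}_X = r(\bar h_{(1)}', \bar a_{(1)})\, \bar a_{(0)}\,\mu^*(\bar h_{(0)}),\]
where the coactions are now the classical ones for the $D$-action on $D/G$ and the $G$-action on $X$.

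The remaining step is to recognize the right-hand side as that of \cref{def:classicalmomentmap}. Since $r\in\fd\otimes\g$ and the pairings of $\cO(D)$ with $\fd$ and of $\cO(G)$ with $\g$ are differentiation at the identity, the $D$-coaction factor produces $\mu^*(\tilde a(-).\bar h)$ and the $G$-coaction factor produces $a(-).\bar a$, so writing $r=\sum_\alpha u_\alpha\otimes v_\alpha$ gives
\[r(\bar h_{(1)}', \bar a_{(1)})\, \bar a_{(0)}\,\mu^*(\bar h_{(0)}) = \sum_\alpha \mu^*(\tilde a(u_\alpha).\bar h)\cdot a(v_\alpha).\bar a.\]
By the preceding lemma $r$ is the canonical element of $\h\otimes\g\subset\fd\otimes\g$ and $r_1 = p_\h$; pairing either side against $\fd^*$ in the first factor identifies $\sum_\alpha u_\alpha\otimes v_\alpha$ with $\sum_i e_i\otimes p_\h(e^i)$ in $\fd\otimes\g$. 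Hence the expression equals $\sum_i \mu^*(\tilde a(e_i).\bar h)\cdot a(p_\h(e^i)).\bar a$, which is exactly the right-hand side of \cref{def:classicalmomentmap} for the Lagrangian complement $\h = r_2(\g^*)$; since the definition only requires the condition for \emph{some} complement, $\mu$ is a classical moment map.

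The main obstacle is bookkeeping rather than conceptual. One must fix the coaction–derivation dictionary (in particular the sign in $\langle u, f_{(1)}'\rangle f_{(0)} = \tilde a(u).f$) consistently for both the $D$-action on $D/G$ and the $G$-action on $X$, and one must confirm that the biderivation $\{-,-\}_X$ extracted from $A_\hbar$ is precisely the quasi-Poisson bracket $\pi_X^\h$ attached to the complement $\h = r_2(\g^*)$ determined by the $r$-matrix, so that the derived identity is literally $\{\mu^*\bar h, \bar a\}_\h = \{\mu^*\bar h,\bar a\}_X$. Both facts follow from the constructions recalled at the start of \cref{sect:classicallimit} and from the lemma computing $r$, but they are where care is needed.
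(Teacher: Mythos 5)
Your proof is correct and follows essentially the same route as the paper: write the quantum moment map condition in Sweedler form via the coquasitriangular structure, expand $\br = \epsilon\otimes\epsilon + \hbar r$ over $\cA = k[\hbar]/\hbar^2$, extract the coefficient of $\hbar$ to get $\{\mu^*f_1, f_2\} = \sum_i \mu^*(\tilde{a}(e_i).f_1)\,a(p_\h(e^i)).f_2$, and invoke the preceding lemma identifying $r$ with the canonical element of $\h\otimes\g$ (so $r_1 = p_\h$) to recognize this as the equation of \cref{def:classicalmomentmap} for the complement $\h = r_2(\g^*)$. The only cosmetic difference is that you work with the commutator $\mu_\hbar(h)a - a\mu_\hbar(h)$ directly, whereas the paper expands $\mu_\hbar = \mu^* + \hbar\mu_1$ and the star product $a\ast b = ab + \hbar B_1(a,b)$ before comparing orders; the bookkeeping caveats you flag are likewise glossed over in the paper's proof.
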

\begin{proof}
Choose identifications
\[\cO_\hbar(D)\cong \cO(D)\otimes \cA,\qquad \cO_\hbar(G)\cong \cO(G)\otimes \cA,\qquad A_\hbar\cong \cO(X)\otimes \cA.\]
With respect to these identifications decompose the star product on $A_\hbar$ as
\[a\ast b = ab + \hbar B_1(a, b),\]
the moment map as
\[\mu_\hbar(a) = \mu^*(a) + \hbar \mu_1(a)\]
and the coquasitriangular structure as
\[\br = \epsilon\otimes \epsilon + \hbar r.\]
Let $\{e_i\}$ be a basis of $\fd$. Then the quantum moment map equation for $f_1\in \cO(D/G)$ and $f_2\in \cO(X)$ is
\begin{align*}
\mu^*(f_1) f_2 + \hbar B_1(\mu^*(f_1), f_2) + \hbar \mu_1(f_1) f_2 &= f_2\mu^*(f_1) + \hbar B_1(f_2, \mu^*(f_1)) \\
&+ \hbar f_2\mu_1(f_1) + \sum_i \hbar a(p_\h(e^i)).f_2 \mu^*(\tilde{a}(e_i).f_1).
\end{align*}
The only nontrivial equation is at the order $\hbar$ and we get
\[\{\mu^*(f_1), f_2\} = \sum_i \mu^*(\tilde{a}(e_i).f_1) a(p_\h(e^i)).f_2\]
which is the classical moment map equation given in \cref{def:classicalmomentmap}.
\end{proof}

\printbibliography

\end{document}